\definecolor{brick}{HTML}{FF0800}
\newcommand{\brick}[1]{{\color{brick} #1}}
\newcommand{\rk}[1]{\brick{#1}\normalmarginpar\marginpar{\brick{\textleaf}}}
\theoremstyle{plain}
  \newtheorem{theorem}{Theorem}[section]
  \newtheorem{lemma}[theorem]{Lemma}
  \newtheorem{corollary}[theorem]{Corollary}
  \newtheorem{fact}[theorem]{Fact}
  \newtheorem{proposition}[theorem]{Proposition}
\theoremstyle{definition}
  \newtheorem{definition}[theorem]{Definition}
  \newtheorem{example}[theorem]{Example}
  \newtheorem{application}{Application}
\theoremstyle{remark}
  \newtheorem{remark}[theorem]{Remark}
  \newtheorem{notation}[theorem]{Notation}
  \numberwithin{equation}{section}
  \DeclareMathOperator\proj{proj}
  \DeclareMathOperator\GL{GL}
  \DeclareMathOperator\Ecc{Ecc}
  \DeclareMathOperator\Grass{Grass}
  \DeclareMathOperator\Leb{Leb}
  \DeclarePairedDelimiter\ceil{\lceil}{\rceil}
\def \N {\mathbb N}
\def \R {\mathbb R}
\def \ind{1\!\!1}
\newcommand*{\e}[1]{\text{e}^{#1}}
\begin{document}

\begin{frontmatter}

\title{Interior points and Lebesgue measure of overlapping Mandelbrot percolation sets}
\runtitle{Random self-similar sets}

\begin{aug}
	\author[A]{\fnms{Vilma}~\snm{Orgov\'anyi}\ead[label=e1]{orgovanyi.vilma@gmail.com}}
	\and
	\author[A]{\fnms{K\'aroly}~\snm{Simon}\ead[label=e2]{karoly.simon51@gmail.com}}

	\address[A]{Department of Stochastics, Institute of Mathematics, Budapest University of Technology and Economics, M\H{u}egyetem rkp. 3., H-1111 Budapest, Hungary\printead[presep={,\ }]{e1,e2}}
	
	\end{aug}
	
\begin{abstract}
	We consider a special one-parameter family of d-dimensional random, homogeneous self-similar iterated function systems (IFSs) satisfying the finite type condition.
    The object of our study is the positivity of Lebesgue measure and the existence of interior points in these random sets and in particular the existence of an interesting parameter interval where the attractor has positive Lebesgue measure, but empty interior almost surely conditioned on the attractor not being empty.
    We give a sharp bound on the critical probability for the case of positivity Lebesgue measure using the theory of multitype branching processes in random environments and in some special cases on the critical probability for the existence of interior points. Using a recent result of Tom Rush, we provide a family of such random sets where there exists a parameter interval for which the corresponding attractor has a positive Lebesgue measure, but empty interior almost surely conditioned on the attractor not being empty.
\end{abstract}

\begin{keyword}[class=MSC]
	\kwd[Primary ]{28A80}
	\kwd[; secondary ]{}
	\end{keyword}
	
	\begin{keyword}
	\kwd{Random fractals}
	\kwd{multitype branching process in random environment}
	\end{keyword}
	\end{frontmatter}

\section{Introduction}
    In this paper we examine the positivity of the Lebesgue measure and the existence of interior points in a family of random self similar sets. For simplicity, we state and prove everything in the case when our set is on the line, however the proofs with barely any modification works in higher dimensions as well, see Appendix \ref{app2} together with a two-dimensional example which can be considered as a Mandelbrot percolation with heavy overlaps. First, we consider deterministic, homogeneous self-similar sets on the line, such that the common contraction ratio is a reciprocal of a non-negative integer and every translation is rational. The corresponding iterated function systems (IFSs) always satisfy the finite type condition---we don't impose further separation condition on the IFSs. We can naturally associate a finite set of matrices $\{\mathbf{B}_0, \dots, \mathbf{B}_{L-1}\}$ (see \eqref{a90}) to such IFSs. We randomize the set analogously to the Mandelbrot percolation by choosing a probability parameter $p$ and repeating the following two steps:
    \begin{enumerate}
        \item Within each retained cylinder interval, we consider the next level cylinder intervals.
        \item Each of them are retained with probability $p$ and discarded with probability $1-p$, independently of everything.
    \end{enumerate}
    We repeat the steps ad infinitum or until the process dies out, which we call extinction. 
    Similarly to the deterministic process the random one is described by the expectation matrices $\{\mathbf{M}_0=p\cdot\mathbf{B}_0, \dots, \mathbf{M}_{L-1}=p\cdot\mathbf{B}_{L-1}\}$.

    In our main theorem (Theorem \ref{z42}) we state that under mild conditions on the expectation matrices (all of them has a positive element in each row and each column  and there exists a strictly positive product of these matrices) the (almost sure) positivity of the Lebesgue measure of such random IFSs (conditioned on non-extinction) is equivalent to the positivity of the Lyapunov exponent $\lambda$ (see Definition \ref{x84}) of the expectation matrices. Similarly, we conjecture that the existence of interior points is determined by the lower spectral radius $\widecheck{\rho}$ (see Definition \ref{q91}) corresponding to the expectation matrices. However, we could only prove this in some special cases (see Corollary \ref{x15}). In general, we proved that if $\log\widecheck{\rho}<0$ then almost surely the attractor has empty interior (see Proposition \ref{x94}). 
Although we do not know if $\log \widecheck{\rho}>0$ guarantees the existence of interior points in our self similar set, but in Proposition \ref{x98} we give a sufficiently general checkable conditions 
which imply the existence of interior points (almost surely conditioned on non-extinction).
\begin{wrapfigure}{r}{0.6\linewidth}\begin{center}
    \includegraphics[width=\linewidth]{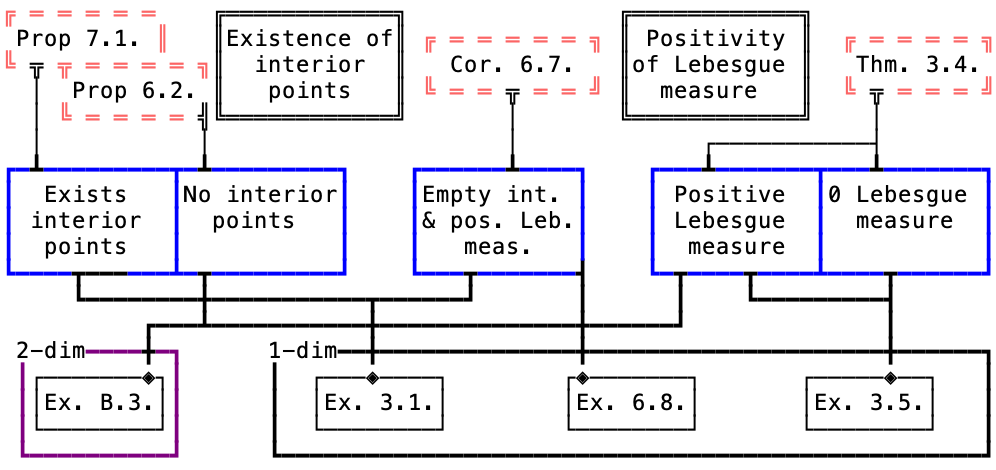}
\end{center}
  \end{wrapfigure}

    Using that the conditions $\lambda>0$ and $\log\widecheck{\rho}<0$ guarantees that the attractor has positive Lebesgue measure but empty interior almost surely (the existence of such deterministic self-similar set on the line is currently unknown), in Section \ref{x16} we investigate the possibility of the existence of a parameter interval where the above two simultaneously holds. It turns out that the existence of this parameter interval follows from the strict convexity of the negative part of the pressure function $P(q)$ (see Figure \ref{x85}), which is already known to be the case (see Theorem \ref{x90}) in the case, when the expectation matrices are invertible and the monoid generated by them is pinching and twisting (see Definition \ref{x18}). We provide a sketch of a visual table of contents summarizing the main results and examples.
    
\section{Notation}
For $k>0$, $[k]:=\{0, \dots, k-1\}$.
We denote the vectors and matrices by boldface letters; in particular $\mathbf{k}:=(k, \dots, k)$.
For two $N$-dimensional vectors $\mathbf{u}=(u_0, \dots, u_{N-1})$ and $\mathbf{v}=(v_0, \dots, v_{N-1})$ and the $N\times N$ matrices $\mathbf{U}=(u_{i,j})_{i,j\in[N]}$ and $\mathbf{V}=(u_{i,j})_{i,j\in[N]}$ let
\begin{align}
	 & \mathbf{u}\cdot \mathbf{v}:= u_0v_0+ \dots+ u_{N-1}v_{N-1} ,\,
	\mathbf{u}^{\mathbf{v}}=\prod_{i=0}^{N-1}u_{i}^{v_i}       \text{ and } \\
	 &
	\begin{array}{lr}
		\mathbf{u} \leq \mathbf{v} \\
		\mathbf{U}\leq \mathbf{V}
	\end{array}\text{ if and only if }
	\begin{array}{lr}
		u_i\leq v_i \\
		u_{i,j}\leq v_{i,j}
	\end{array}\text{ for all $i,j\in [N]$}.\label{w76}
\end{align}
We further use the strict equality version of \eqref{w76}, when all $\leq$ is replaced with $<$.

Some further notation we use throughout paper, including the place of the first occurrence:

\begin{center}
	\setlength\extrarowheight{4pt}
	\begin{longtable}{@{}m{0.07\textwidth} | m{0.78\textwidth} m{0.1\textwidth}}
	symbol& explanation&link\\
	\hline

    $\Lambda_{\mathcal{S}}$&the attractor corresponding to the IFS $\mathcal{S}$&   \eqref{a97}\\
    $L, M$&the common contraction ratio of the IFSs, $L\in\mathbb{N}$, $L\geq 2$; and $M=\#\mathcal{S}$&  \eqref{a99}\\
    $[K]$&$\{0,\dots,K-1\}$&  \\

    $\Pi(.)$&the natural projection from the symbolic space ($\Sigma_M=[M]^{\mathbb{N}}$) to the attractor&   \eqref{a96}\\

    $N$&number of basic intervals: $J^{(k)}$ ($k\in [N]$) &   \eqref{a92}\\

    $J _{\pmb{\theta } }^{(k) }$&\footnotesize $\left[b_k L+\sum_{\ell =1}^{n}\theta _\ell L^{-(\ell -1)}, b_k L+\sum_{\ell =1}^{n }\theta _\ell L^{-(\ell -1)}+L^{-(n-1)}\right]$,  $\pmb{\theta}\in[L]^n,\,k\in [N]$&   \eqref{a91}\\
						
	$\mathbf{B}_{\theta}$ & the $N\times N$ coding matrix &\eqref{a90}	\\
					
    $\Lambda_{\mathcal{S}, p}$&the random attractor corresponding to the IFS $\mathcal{S}$ and probability $p$&    \eqref{a95}\\

	$\lambda$&Lyapunov exponent &  Def. \ref{x84}\\
									
    $\mathbf{M}_{\theta}$&expectation matrix in the environment $\theta$ and also for the CISSIFS& \eqref{x25}\\																						  
	$q^{(k)}(\overline{\pmb{\theta}})$, $\mathbf{q}(\overline{\pmb{\theta}})$&the probability that the process starting with one individual of type-$k$ becomes extinct, and the vector of these, $(q^{(k)}(\overline{\pmb{\theta}}))_{k\in[N]}$ resp.& \eqref{x59}\\

    $\widecheck{\rho}(\mathcal{B})$& Lower spectral radius corr. to the set of matrices $\mathcal{B}=\{\mathbf{B}_0, \dots, \mathbf{B}_{L-1}\}$&
    Def \ref{q91}						          
    \end{longtable}                                        
	\end{center}
\section{Coin-tossing integer self-similar IFSs on the line}\label{z22}
In this chapter we first formally introduce a special type of homogeneous deterministic IFSs (called \textit{integer self-similar IFSs on the line}, or shortly ISSIFS). This  serves as a skeleton for the random IFS (called \textit{coin tossing} ISSIFSs, CISSIFS), which is the object of our interest throughout the paper. It is defined in the second part of this chapter. Then we state the main theorem (Theorem \ref{z42}) of the paper, which describes the positivity of the Lebesgue measure of a subfamily of CISSIFSs in terms of the probability parameter $p$ and the Lyapunov exponents corresponding to the matrices given in \eqref{a90}.
Such ISSIFSs were considered by Ruiz in \cite{ruiz2009dimension} and more general schemes for randomization occurs in \cite{falconer2014exact}, \cite{zbMATH06808299}.
\subsection{Deterministic integer self-similar IFS on the line}\label{z45}

Self-similar IFSs on the line are finite list of contracting similarities of $\mathbb{R}$ which can be presented as $\mathcal{S}=\left\{ S_i(x):=r_ix+t_i \right\}_{i=0}^{M-1}$ for some $r_i\in(-1,1)\setminus \{0\}$. In this paper we confine ourselves to the special case when
\begin{enumerate}
    [label=\textbf{(\alph*)}]
    \item All contractions are the reciprocal of the same integer: That is $r_i=\frac{1}{L}$ for an $L\geq 2$, $L\in\mathbb{N}$.
    \item All translations $t_i$ are rational numbers: That is $t_i\in \mathbb{Q}$.
\end{enumerate}
If we multiply all the translation parameters with the same positive number, the IFS obtained, has the same properties as the original one. Hence, without loss of generality we may assume, and throughout the paper we will assume that $\mathcal{S}$ is of the form
\begin{multline}\label{a99}
    \mathcal{S}:=\left\{
        S_i(x):=\frac{1}{L}x+t_i
     \right\}_{i=0}^{M-1}
     , S_i:\mathbb{R}\to\mathbb{R},
    \\
    L\in\N \setminus \left\{0,1\right\},
    \, t_i\in \mathbb{N}, 0=t_0\leq \cdots\leq t_{M-1},\
    L-1|t_{M-1}.
    \end{multline}
    We call an Iterated Function System (IFS) in this form \textit{Integer Self-Similar IFS} (ISSIFS).
 The attractor of $\mathcal{F}$ is the unique non-empty compact set $\Lambda $ for which $\Lambda_{\mathcal{S}}=\Lambda =\bigcup _{i=1}^{M}S_i(\Lambda )$.
It is easy to see that
\begin{equation}
\label{a97}
\Lambda =\bigcap _{n=1}^{\infty  }
\bigcup _{i_1\dots  i_n}S_{i_1\dots  i_n}(I),
\end{equation}
where $S_{i_1\dots  i_n}:=S_{i_1}\circ\cdots\circ S_{i_n} $ and $I:=[0,\text{Fix}(S_{M-1})]$, where $\text{Fix}(S_{M-1})$ is the fixed point of $S_{M-1}$.
The corresponding symbolic space is
$\Sigma_M :=\left\{ 0,\dots  M-1 \right\}^{\mathbb{N}}$. The natural coding of the points of $\Lambda $
by the elements of $\Sigma_M $ is given by
\begin{equation}
\label{a96}
\Pi (\mathbf{i}):=\lim\limits_{n\to\infty}
S_{i_1\dots  i_n}(0), \quad \mathbf{i}=(i_1,i_2,\dots  ).
\end{equation}
Let $\mu :=\left(\frac{1}{M} ,\dots  ,\frac{1}{M} \right)^{\mathbb{N}}$ be the uniformly distributed measure on $\Sigma_M $ and denote its push forward measure by
\begin{equation}
\label{a94}
\eta :=\Pi _*\mu.
\end{equation}

\begin{figure}
    \centering
    \begin{subfigure}[b]{0.65\textwidth}
        \centering
        \includegraphics[height=4.2cm]{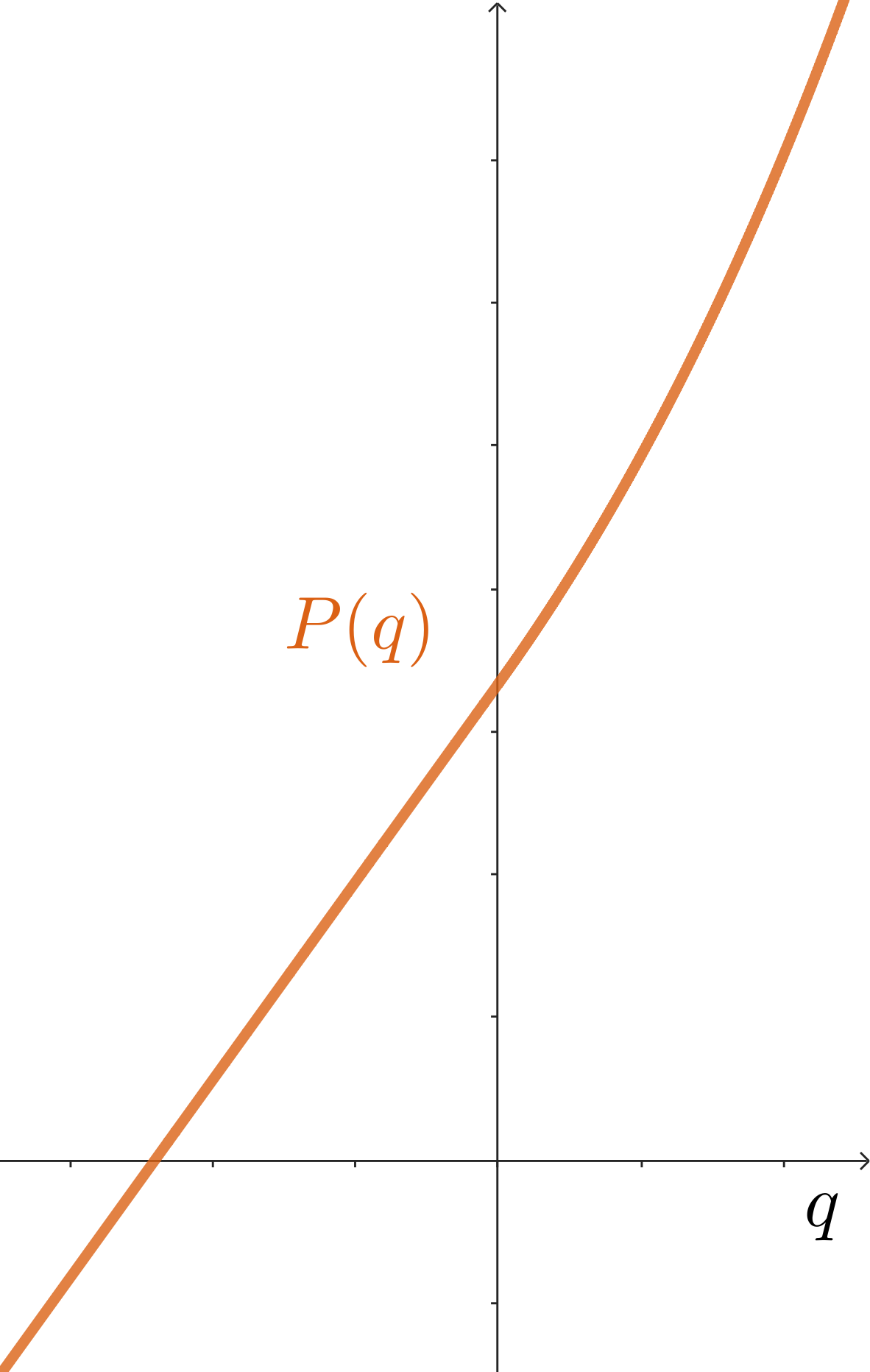}
        \includegraphics[height=4.2cm]{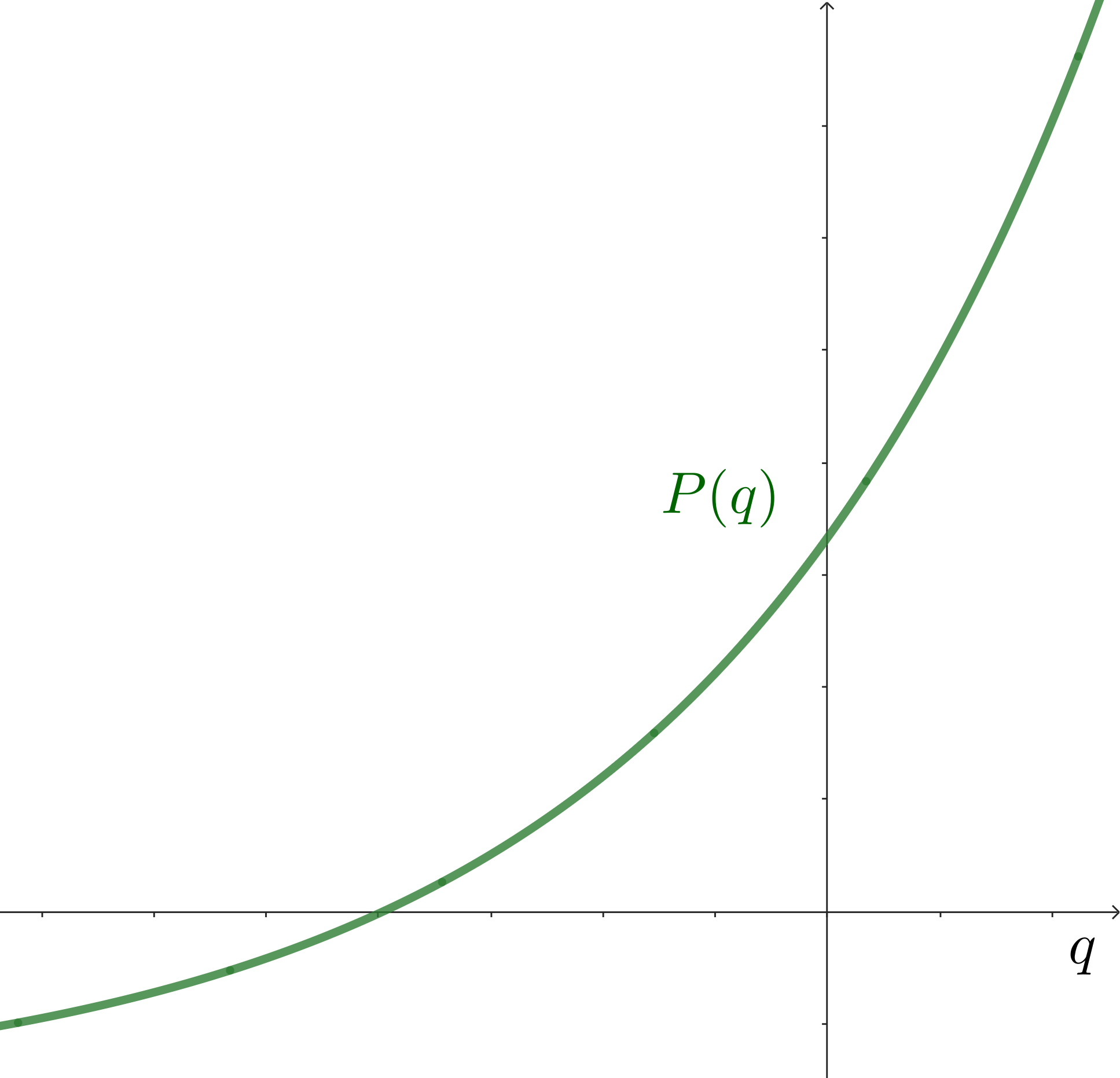}
        \caption{The shape of the pressure function. In the second case we have the \textit{interesting parameter interval} exists.}
        \label{x85}
    \end{subfigure}
    \hfill
    \begin{subfigure}[b]{0.34\textwidth}
        \centering
        \includegraphics[height=4.4cm]{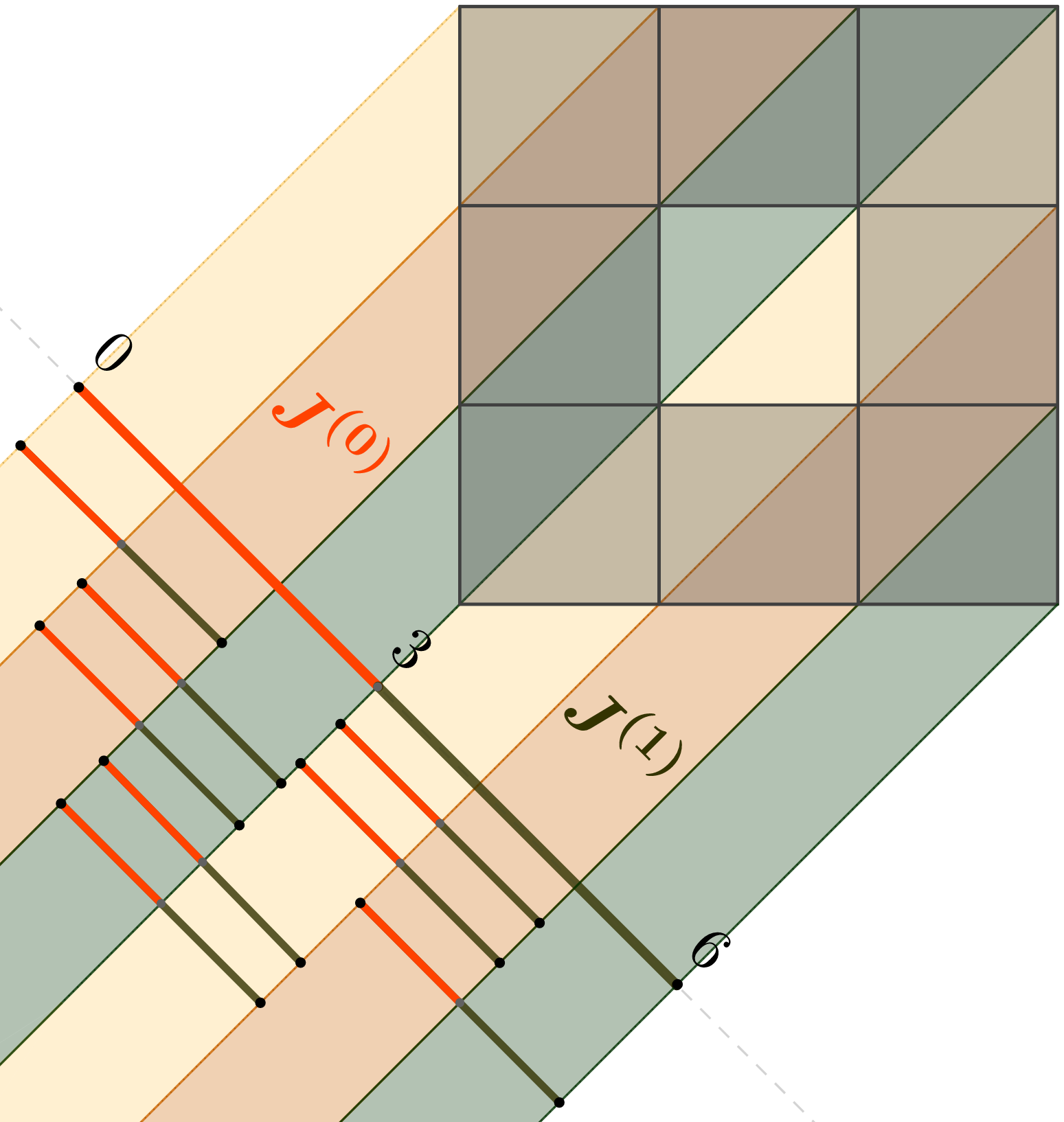}
        \caption{45-degree projection of the Sierpi\'nski carpet.}
        \label{x86}
    \end{subfigure}
    \caption{}
    \label{x87}
\end{figure}

\subsubsection{Setup through an example}\label{x28}
Before treating a general set of this type, we examine  the example of the $45$-degree projection of the Sierpi\'nski carpet. We recommend looking at Figure \ref{x86} while reading the description.
\begin{equation}
    \mathcal{S}=\{S_i(x)=\frac{1}{3}x+t_i\}_{i=0}^{7},
\end{equation}
where $t_0=0$, $t_1=t_2=1$, $t_3=t_4=2$, $t_5=t_6=3$ and $t_7=4$. In this case the contraction ratio is the reciprocal of $L=3$, the number of functions are $M=8$.
Consider the triadic intervals 
\begin{equation}
    \mathcal{D}_k:= 
        \left\{\left[ (i-1)3^{-k},i 3^ {-k} \right]:i\in \mathbb{Z}\right\},\quad k\in\left\{ -1,0,1,2,\dots \right\},
\end{equation}
We are particularly interested in the intervals $J^{(0)}=[0,3], J^{(1)}=[3,6]\in \mathcal{D}_{-1}$, which we call \textit{basic intervals}. 

In particular, since the images of the basic intervals under the iterates of the functions of our IFS are triadic subsets of the basic intervals, we keep track of the number of cylinders of intersecting a triadic subinterval of $J^{(k)}$.

Namely, consider the triadic subsets,
$$
J_{\pmb{\theta}}^{(k)}=
\left[3 k+\sum_{\ell =1}^{n}\theta _\ell 3^{-(\ell -1)},3 k+\sum_{\ell =1}^{n }\theta _\ell3^{-(\ell -1)}+3^{-(n-1)}\right],
$$  
of $J^{(k)}$ for $\pmb{\theta}\in[3]^n$.
For any $\mathbf{i}\in[M]^{n}$ and basic interval $J^{(\ell)}$ for one of the basic intervals $J^{(k)}$ and $\pmb{\theta}\in [3]^n$ we have $S_{\mathbf{i}}(J^{(\ell)})=J_{\pmb{\theta}}^{(k)}$. Following Ruiz (\cite{ruiz2009dimension}) we define altogether $L=3$, $2\times 2$ ($2$ is the number of basic intervals, which determines the shape) matrices
\begin{equation}\label{a90}
    \mathbf{B}_{\theta} (i,k) :=
    \#\left\{ \ell \in [M]:
    S_\ell (J^{(k)})=J _{\theta }^{(i) }
    \right\}, 
\end{equation}
for $\theta\in[3],\,i,k\in[2]$.
The step by step construction of $\mathbf{B}_{1}$ is as follows. The first row describes the cylinders intersecting $J^{(0)}_{1}$. One can verify that $S_0(J^{(1)})=S_1(J^{(0)})=S_1(J^{(0)})=J^{(0)}_{1}$, meaning that $B_{1}(0,0)=2$, $B_{1}(0,1)=1$. This can be also read from the figure by inspecting the middle, light orange stripe through $J^{(0)}$. This contains two  orange (image of $J^{(0)}$) and one dark grey (image of $J^{(1)}$) interval.
Altogether 
\begin{equation*}
    \mathbf{B}_0=
\begin{bmatrix}
1 & 0 \\
2 & 2  \\

\end{bmatrix}, \quad
\mathbf{B}_1=
\begin{bmatrix}
2 & 1 \\
1 & 2  \\

\end{bmatrix}, \quad
\mathbf{B}_2=
\begin{bmatrix}
2 & 2 \\
0 & 1  \\
\end{bmatrix}.
\end{equation*}
\subsubsection{General setup}
Recall that $L\geq 2$ is the reciprocal of the contraction ratio of our IFS.
We introduce the family of partitions (mod $0$)
of $\mathbb{R}$ and refer to the elements of these partitions as $L$-adic intervals.
\begin{equation}
\label{a93}
\mathcal{D}_k:=
\left\{
    \left[ (i-1)L^{-k},iL^{-k} \right]:
    i\in \mathbb{Z}
 \right\},\quad
 k\in\left\{ -1,0,1,2,\dots   \right\}.
\end{equation}
Particular attention is given to those
elements of $\mathcal{D}_{-1}$ which have positive $\eta$-measure ($\eta$, the push-forward of the uniform measure on the codespace was defined in \eqref{a94}). We call these intervals \textit{basic intervals}. The number of basic intervals is finite, say $N$,
and the basic intervals are denoted by
\begin{equation}
\label{a92}
\left\{J^{(U)}:=[b_U L,(b_U +1)L]  \right\}_{U=0}^{N-1},
\quad
b_kóU\in \mathbb{N},\
b_U<b_{U+1},\
U=0,\dots  ,N-2.
\end{equation}

The smallest interval that contains all the basic intervals is $I=[0,\text{Fix}(S_{M-1})]=\left[ 0,L\frac{t_{M-1}}{L-1} \right] $.
Here we used that by assumption $\frac{t_{M-1}}{L-1}\in\mathbb{N}$. For every $U\in [N]$
the basic interval $J^{(U)}$ subdivides
into $L^n$ congruent subintervals contained in
$\mathcal{D}_{n-1}$ (of length $L^{-(n-1)}$)
which are denoted by $J _{\pmb{\theta}} ^{(U) }$,
where $\pmb{\theta }=(\theta _1,\dots  ,\theta _n)\in [L]^n$. More precisely,
for $U\in[N]$ and $\pmb{\theta}  \in[L]^n$:
\begin{equation}
\label{a91}
J _{\pmb{\theta } }^{(U) }=
\left[
b_U L+\sum_{\ell =1}^{n }
\theta _\ell L^{-(\ell -1)}  ,
b_U L+\sum_{\ell =1}^{n }
\theta _\ell L^{-(\ell -1)}
+
L^{-(n-1)}
 \right].
\end{equation}
For every $\theta \in[L]$ we define the $N\times N$
matrix exactly as in \ref{a90}.
Then one can easily check  that
for a $\pmb{\theta }=(\theta _1,\dots  ,\theta _n)\in [L]^n$
we have
\begin{equation}
\label{z44}
\mathbf{B}_{\pmb{\theta }}(U ,V)=\left( \mathbf{B}_{\theta _1}\cdots \mathbf{B}_{\theta _n} \right) (U,V)
= 
\#
\left\{
(\ell _1,\dots  ,\ell_n)\in [M]^n :
S_{\ell _1\dots  \ell_n}(J^{(V)})=J _{ \pmb{\theta }}^{(U) }
 \right\}.
\end{equation}

\subsection{Formal definition of CISSIFS}\label{a77}
First we define the randomly labelled $M$-ary trees. We fix a $p\in(0,1]$ and an integer $M\geq 2$.
 The $M$-ary tree $\mathcal{T}_M$ is defined as follows: the root is denoted by $\emptyset $ and all other nodes are the strings over the alphabet $[M]$.
The level of a node is its length as a string. The $n$-th
level of the tree $\mathcal{M}_n$ is the set of all nodes of level $n$. The offspring of a node $i_1\dots  i_n\in\mathcal{M}_n$
are the nodes $i_1\dots  i_ni_{n+1}$ for all $i_{n+1}\in[M]$.

Let $\Omega_M :=\left\{ 0,1 \right\}^{\mathcal{T}_M}$ be the set of labelled trees (with labels chosen from $\left\{ 0,1 \right\}$).
The standard $\sigma $-algebra on $\Omega_M $ is denoted by $\mathcal{A}$. We introduce  the probability measure $\mathbb{P}_p$ on $(\Omega_M ,\mathcal{A})$
by assigning a Bernoulli random  variable  $X_{i_1\dots  i_n}$ to each node $i_1\dots  i_n\in\mathcal{T}_M$. The
probability measure $\mathbb{P}_p$ is defined such that
\begin{enumerate}[label=\textbf{(\alph*)}]
    \item $X_{\emptyset }\equiv 1$,
    \item $\mathbb{P}_p(  X_{i_1\dots  i_n}=1)=p$ for all $n\geq 1$ and $i_1\dots  i_n\in\mathcal{T}_M$,
    \item $\left\{ X_{i_1\dots  i_n} \right\}_{i_1\dots  i_n\in \mathcal{T}_M}$ are independent.
\end{enumerate}
 For an $\pmb{\omega}_M \in \Omega $ and a $n\geq 1$ we write
 \begin{equation}
 \label{a98}
 \mathcal{E}_n(\pmb{\omega} ):=\left\{
    i_1\dots  i_n\in\mathcal{T}_M: X_{i_1}=X_{i_1i_2}=\cdots X_{i_1i_2\dots  i_n}=1
  \right\}.
 \end{equation}
 Moreover, let
\begin{equation}
\label{z35}
\mathcal{E}_{\infty  }(\pmb{\omega} ):=
\left\{
    \mathbf{i}\in\Sigma _M:
    X_{\mathbf{i}|_n}=1,\ \forall n \geq 1
\right\}.
\end{equation}

 The event that $\mathcal{E}_\infty \ne \emptyset $
 is called \textit{non-extinction}.
 Then the \textit{coin tossing integer self-similar set corresponding to the ISSIFS (see \eqref{a99}) $\mathcal{S}$ and the probability $p$} is
 \begin{equation}
 \label{a95}
 \Lambda_{\mathcal{S},p}(\pmb{\omega} )=
 \Pi (\mathcal{E}_{\infty  }(\pmb{\omega} )),
 \end{equation}
 where $\Pi $ was defined in \eqref{a96}.
 Assume that   the deterministic self-similar IFS  $\mathcal{S}$ is of the form of \eqref{a99}, and  satisfies the Open Set Condition (see
    \cite{falconer2004fractal}).
It follows from a theorem of
Falconer \cite{zbMATH04011530} and Mauldin-Williams \cite{mauldin1986random}
that for $\mathbb{P}_p$-a.e. $\pmb{\omega} \in\Omega $
\begin{equation}
    \label{u65}
    \dim_{\rm H}  \Lambda_{\mathcal{S},p}(\pmb{\omega} )
    =\dim_{\rm B}  \Lambda_{\mathcal{S},p}( \pmb{\omega} )=
     \frac{\log (Mp)}{\log L}
    ,\quad \text{if}\quad
    \Lambda_{\mathcal{S},p}(\pmb{\omega} )\ne \emptyset.
    \end{equation}
\subsection{Random sponges of $\mathbb{R}^d$ and their projections} \label{z14}
Analogously to CISSIFS we can define \textit{random $d$-dimensional sponges} ($d\geq 2$), with the modification that instead of the (deterministic) ISSIFS $\mathcal{S}$ we consider an IFS $\mathcal{F}$ corresponding to a $d$-dimensional sponge. Namely, choose a parameter $K$ and an arbitrary subset $\left\{\mathbf{d}_i\right\}_{i=0}^{M-1}$ (of size $2 \leq M\leq K^d-1$) of the set $\left\{\mathbf{t}_i\right\}_{i=0}^{K^d-1}$, the enumeration of the left bottom corners of the $K$-mesh cubes contained in $[0,1]^d$.
The IFS $\mathcal{F}$ corresponding to $K$ and $\left\{\mathbf{d}_i\right\}_{i=0}^{M-1}$ is
$$\mathcal{F}:=\left\{f_i(\mathbf{x})=
\frac{\mathbf{x}}{K}+\mathbf{t}_i\right\}_{i=0}^{M-1}.$$

For $\mathbf{a}\in\mathbb{R}^d$ we define the projection $\proj_{\mathbf{a}}:\mathbb{R}^d\to \mathbb{R}$, $\proj_{\mathbf{a}}(\mathbf{x})=\mathbf{a}\cdot \mathbf{x}$, where $\cdot$ denotes the standard dot product in $\mathbb{R}^d$. We say that $\proj_{\mathbf{a}}$ is a \textit{rational projection} if all coordinates of $\mathbf{a}$ is a rational number. The rational projections of random $d$-dimensional sponges to lines are examples of CISSIFSs.
We repeatedly refer to the following example.
\begin{figure}
    \centering
    \includegraphics[width=0.45\linewidth]{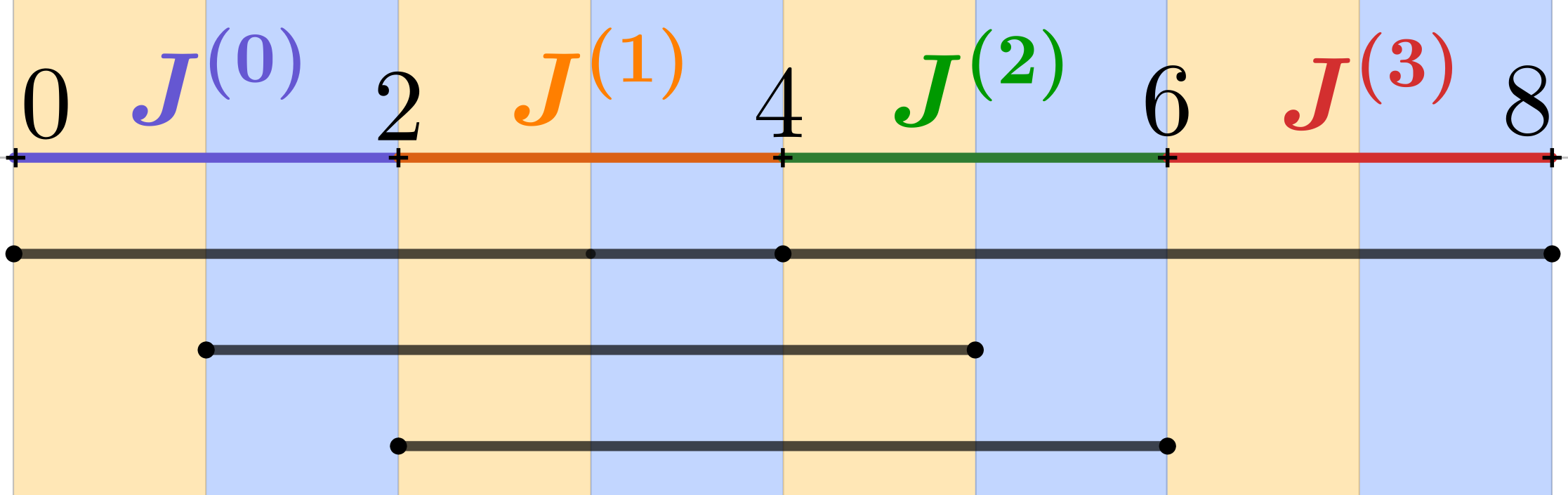}
    \caption{Example \ref{x20}.}\label{a71}
  \end{figure}
\begin{example}\label{x20}
    Let $$S_i(x)=\frac{1}{2}x+t_i,$$
    where $t_i=0,1,2,4$. Note that this IFS is not a projection of a 2 dimensional carpet. Consider the CISSIFS corresponding to this IFS. In this case $N=4$, $L=2$, the expectation matrices are 
    \begin{equation*}
        \mathbf{M}_0=p\cdot
    \begin{bmatrix}
1& 0& 0& 0\\
1& 1& 1& 0\\
1& 0& 1& 1\\
0& 0& 1& 0\\

    \end{bmatrix}, \quad
    \mathbf{M}_1=p\cdot
    \begin{bmatrix}
        1& 1& 0& 0\\
        0& 1& 1& 1\\
        0& 1& 0& 2\\
        0& 0& 0& 1\\

    \end{bmatrix}.
    \end{equation*}
    See Example \ref{x24}, and Application \ref{x22}.
\end{example}

\subsection{The result}\label{z43}
Let $\mathcal{S}=\left\{ S_i \right\}_{i=0}^{M-1 }$ be an ISSIFS of the form of \eqref{a99}. Let $p\in[0,1]$. We write $\Lambda _{\mathcal{S},p}$ for the  corresponding coin tossing integer self-similar set defined in Section \ref{a77}.
Let $\mathcal{B}:=\left\{ \mathbf{B}_0,\dots  ,\mathbf{B}_{L-1} \right\}$, where the matrices $\mathbf{B}_\theta $, $\theta \in[L]$  were defined in \eqref{a90}. Let $\nu$ be any ergodic measure on $\Sigma$. 
\begin{definition}[Good set of matrices]\label{x52}
In the above setup we say that $\mathcal{B}$ is good (w.r.t the ergodic measure $\nu$) if 
\begin{itemize}
    \item every element of $\mathcal{B}$ is non-negative;
    \item every element of $\mathcal{B}$ is allowable (i.e. every row and column contains a strictly positive element), and
    \item there exists $(\theta_1,\dots, \theta_n)$ such that $\nu(\{\overline{\pmb{\theta}}:\overline{\pmb{\theta}}|_n=(\theta_1,\dots, \theta_n)\})>0$ the product $\mathbf{B}_{\theta_1}\cdots \mathbf{B}_{\theta_n}$ is strictly positive.
\end{itemize}
\end{definition}
We inspect the push-forward of the measure $\nu$ by the coding map in each of the basic intervals. Namely, for each $U\in [N]$ we define the projection $\Gamma^{(U)}:\Sigma\to J^{(U)}$,
\begin{equation}
\Gamma^{(U)}(\overline{\pmb{\theta}}):= b_UL+\sum_{\ell =1}^{\infty }
\theta _\ell L^{-(\ell -1)},
\end{equation}
recall that $b_UL$ is the left-endpoint of the interval $J^{(U)}$. This mapping is a ($\nu$-mod 0) bijection between the code-space $\Sigma$ and $J^{(U)}$. We define the (not probability) measure  $\widetilde{\nu}$ on the set $\bigcup_{U\in [N]} J^{(U)}$ in the following way:
\begin{equation}\label{a69}
\widetilde{\nu}^{(U)}:=\Gamma^{(U)}_*\nu\quad
\text{ and }\quad
    \widetilde{\nu}=\sum_{U\in [N]} \widetilde{\nu}^{(U)}.
\end{equation}

The Lyapunov exponent corresponding to the ergodic measure $\nu$ and the good matrices $\mathcal{B}$:
\begin{definition}[Lyapunov exponent]\label{x84} We are given an
	ergodic measure $\nu $ on $(\Sigma ,\sigma )$ and a
	$\mathcal{B}=\{\mathbf{B}_{i}\}_{i\in\mathcal{I}}$, which is good
	with respect to $\nu $.
	The \textit{Lyapunov}-\textit{exponent} corresponding to $\mathcal{B}$ and $\nu$ is
	\begin{equation*}
		\lambda:=\lambda(\nu, \mathcal{B})= \lim_{n \to \infty} \frac{1}{n} \log\|\mathbf{B}_{\overline{\pmb{\theta}}|_n}\|_{*} \; \text{for $\nu$-almost every $\overline{\pmb{\theta}}\in \Sigma$},
	\end{equation*}
    where $\|.\|_{*}$ is any submultiplicative matrix norm.
\end{definition}
The existence of $\lambda$ as defined above follows from
\cite[Corollary 10.1.1]{walters2000introduction}, and the ergodicity of $\nu$.

We state the main theorem of the paper.
\begin{theorem}\label{z42}
We suppose that $\mathcal{B}$ is good w.r.t. the ergodic measure $\nu$. Consider the Lyapunov exponent $\lambda(\nu,\mathcal{B})$.
\begin{enumerate}
\item If $\e{-\lambda }<p$, then $\widetilde{\nu}(\Lambda _{\mathcal{S},p})>0$ almost surely conditioned on non-extinction.
\item If $\e{-\lambda }\geq p$ then $\widetilde{\nu}(\Lambda _{\mathcal{S},p})=0$ almost surely.
\end{enumerate}

In particular, when $\nu=\left(\frac{1}{L}, \dots, \frac{1}{L}\right)^{\N}$, then $\widetilde{\nu}$ is the (positive) constant multiple of the Lebesgue measure restricted to the union of the basic intervals: $\bigcup_{U\in[N]} J^{(U)}$. In this way, since $\Lambda_{\mathcal{S},p}\subset \bigcup_{U\in[N]} J^{(U)}$ the statement above is an if and only statement about the positivity of Lebesgue measure of the random attractor.
\end{theorem}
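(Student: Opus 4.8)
The plan is to recognise the fibrewise structure of the random attractor as a multitype branching process in random environment (MBPRE) and to read off the dichotomy from its survival/extinction behaviour. Fix a basic interval index $U\in[N]$ and the point $x=\Gamma^{(U)}(\overline{\pmb{\theta}})$, so that $x$ descends through the nested $L$-adic cells $J^{(U)}_{\overline{\pmb{\theta}}|_n}$. By \eqref{z44} the words $\mathbf{i}\in[M]^n$ with $S_{\mathbf{i}}(J^{(V)})=J^{(U)}_{\overline{\pmb{\theta}}|_n}$ are counted by $\mathbf{B}_{\overline{\pmb{\theta}}|_n}(U,V)$; I would declare such a word, when it lies in the retained tree $\mathcal{E}_n(\pmb{\omega})$, a \emph{type-$V$ individual at generation $n$}. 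Extending a type-$V$ individual by a symbol $\ell$ produces a type-$W$ child precisely when $S_\ell(J^{(W)})=J^{(V)}_{\theta_{n+1}}$, which occurs $\mathbf{B}_{\theta_{n+1}}(V,W)$ times, and each such child is retained independently with probability $p$. Hence the generation-wise population $\mathbf{Z}_n(\overline{\pmb{\theta}})$ is an MBPRE whose environment sequence is $\overline{\pmb{\theta}}\sim\nu$ and whose mean offspring matrix in environment $\theta$ is exactly $\mathbf{M}_\theta=p\,\mathbf{B}_\theta$; starting from the single type-$U$ individual $J^{(U)}=J^{(U)}_{\overline{\pmb{\theta}}|_0}$, König's lemma yields the fundamental equivalence
\begin{equation*}
\{x\in\Lambda_{\mathcal{S},p}(\pmb{\omega})\}=\{\text{the MBPRE with environment }\overline{\pmb{\theta}}\text{ survives}\}.
\end{equation*}
The top Lyapunov exponent of the mean matrices is $\lim_n\frac1n\log\|\mathbf{M}_{\overline{\pmb{\theta}}|_n}\|_{*}=\log p+\lambda$, and the goodness of $\mathcal{B}$ (allowability together with a strictly positive product on a $\nu$-positive cylinder) supplies the irreducibility and non-degeneracy needed to apply the standard survival--extinction classification for MBPRE.

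For part (2), suppose $\e{-\lambda}\ge p$, i.e. $\log p+\lambda\le 0$. By Fubini,
\begin{equation*}
\mathbb{E}\bigl[\widetilde{\nu}(\Lambda_{\mathcal{S},p})\bigr]
=\sum_{U\in[N]}\int_{\Sigma}\mathbb{P}_p\!\left(\Gamma^{(U)}(\overline{\pmb{\theta}})\in\Lambda_{\mathcal{S},p}\right)d\nu(\overline{\pmb{\theta}}),
\end{equation*}
so it suffices to show the integrand vanishes for $\nu$-a.e. $\overline{\pmb{\theta}}$. Survival forces $\mathbf{Z}_n\neq\mathbf{0}$ for every $n$, whence the first-moment bound $\mathbb{P}_p(x\in\Lambda_{\mathcal{S},p})\le \inf_n p^n(\mathbf{B}_{\overline{\pmb{\theta}}|_n}\mathbf{1})_U$. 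Since $\frac1n\log(\mathbf{B}_{\overline{\pmb{\theta}}|_n}\mathbf{1})_U\to\lambda$ for $\nu$-a.e. $\overline{\pmb{\theta}}$ (Definition \ref{x84} and allowability), in the strictly subcritical case $\log p+\lambda<0$ the right-hand side decays geometrically to $0$, so the survival probability is $0$ a.e.; then $\mathbb{E}[\widetilde{\nu}(\Lambda_{\mathcal{S},p})]=0$ and, $\widetilde{\nu}(\Lambda_{\mathcal{S},p})$ being non-negative, it vanishes almost surely. The borderline case $\log p+\lambda=0$ is exactly where the first moment is inconclusive; here I would instead invoke the a.s.-extinction theorem for \emph{critical} MBPRE (vanishing Lyapunov exponent of the mean matrices forces almost sure extinction under the irreducibility guaranteed by goodness), which again gives zero survival probability a.e. and hence $\widetilde{\nu}(\Lambda_{\mathcal{S},p})=0$ a.s.

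For part (1), suppose $\e{-\lambda}<p$, i.e. $\log p+\lambda>0$: the MBPRE is supercritical, so for $\nu$-a.e. $\overline{\pmb{\theta}}$ its survival probability $1-q^{(U)}(\overline{\pmb{\theta}})$ is strictly positive. Consequently $\mathbb{E}[\widetilde{\nu}(\Lambda_{\mathcal{S},p})]=\sum_U\int(1-q^{(U)}(\overline{\pmb{\theta}}))\,d\nu>0$, so $\widetilde{\nu}(\Lambda_{\mathcal{S},p})>0$ with positive probability. To upgrade this to ``almost surely conditioned on non-extinction'' I would run a self-similar $0$--$1$ argument: writing $\Lambda_{\mathcal{S},p}=\bigcup_{\ell:X_\ell=1}S_\ell(\Lambda^{(\ell)}_{\mathcal{S},p})$ with i.i.d. copies $\Lambda^{(\ell)}_{\mathcal{S},p}$, and using that $\nu$ is shift-invariant (so that, up to the scaling $1/L$, each $\widetilde{\nu}\circ S_\ell$ is compatible with the same family $\{\widetilde{\nu}^{(V)}\}$ and in particular preserves nullity), the quantity $\beta:=\mathbb{P}_p(\widetilde{\nu}(\Lambda_{\mathcal{S},p})=0)$ satisfies the same fixed-point equation $s=(1-p+ps)^M$ as the extinction probability $q_{\mathrm{ext}}$ of the underlying $M$-ary percolation. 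This equation has only the roots $s=1$ and $s=q_{\mathrm{ext}}$; since the previous line gives $\beta<1$, we conclude $\beta=q_{\mathrm{ext}}=\mathbb{P}_p(\text{extinction})$, i.e. $\widetilde{\nu}(\Lambda_{\mathcal{S},p})>0$ almost surely on non-extinction.

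Finally, for the ``in particular'' clause, when $\nu=(\tfrac1L,\dots,\tfrac1L)^{\N}$ the map $\Gamma^{(U)}$ is, after the affine normalisation $y\mapsto b_UL+y$, the base-$L$ expansion $\overline{\pmb{\theta}}\mapsto L\sum_{\ell\ge1}\theta_\ell L^{-\ell}$, which transports the uniform Bernoulli measure to a constant multiple of Lebesgue measure on $J^{(U)}$; summing over $U$ gives $\widetilde{\nu}=c\,\Leb|_{\bigcup_U J^{(U)}}$ with $c>0$. As $\Lambda_{\mathcal{S},p}\subseteq\bigcup_U J^{(U)}$, parts (1)--(2) then read precisely as the asserted equivalence between $\e{-\lambda}<p$ and $\Leb(\Lambda_{\mathcal{S},p})>0$. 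I expect the genuine obstacle to be the borderline case $\log p+\lambda=0$ in part (2): ruling out survival there cannot be done by first moments and requires the sharp almost-sure extinction result for critical multitype branching processes in random environment, whose hypotheses must be matched exactly to the goodness conditions on $\mathcal{B}$. The self-similar $0$--$1$ upgrade in part (1) is the second, milder, point needing care, as it hinges on the compatibility of $\widetilde{\nu}$ with the IFS afforded by the shift-invariance of $\nu$.
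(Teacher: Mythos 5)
Your proposal is correct in substance and follows the same architecture as the paper: the identification of the fibrewise process over $x=\Gamma^{(U)}(\overline{\pmb{\theta}})$ with an MBPRE whose mean matrices are $\mathbf{M}_\theta=p\,\mathbf{B}_\theta$ (so that the relevant Lyapunov exponent is $\log p+\lambda$) is exactly the content of Section \ref{x81}, your Fubini reduction is the paper's Lemma \ref{d77}, and the supercritical/subcritical/critical trichotomy is the paper's Theorem \ref{z68}. Where you differ is in which black boxes you open. For the strictly subcritical case you give a direct first-moment bound $\mathbb{P}(x\in\Lambda_{\mathcal{S},p})\le p^{n}(\mathbf{B}_{\overline{\pmb{\theta}}|_n}\mathbf{1})_U$ instead of citing the extinction theorem from \cite{2024MBPRE}; this is more elementary and self-contained (only the upper bound $\limsup_n\frac1n\log(\mathbf{B}_{\overline{\pmb{\theta}}|_n}\mathbf{1})_U\le\lambda$ is needed, which is trivial since a row sum is dominated by the norm). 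For the critical boundary $p=\e{-\lambda}$ you correctly identify that first moments are inconclusive and defer to a critical-MBPRE extinction theorem; this is precisely Theorem \ref{z68}(3), and your explicit flag that its hypotheses (the non-degeneracy condition on some environment $\theta$, not just goodness) must be matched is if anything more careful than the paper's own write-up, which nominally cites only part (2) of Theorem \ref{z68} for the whole regime $\e{-\lambda}\geq p$. For the $0$--$1$ upgrade in part (1) you reprove, via the fixed-point equation $s=(1-p+ps)^M$, what the paper imports as the standard percolation lemma \cite[Lemma 3.9]{OurPaper}. One caveat there: your equation $\beta=(1-p+p\beta)^M$ requires that $\widetilde{\nu}(S_\ell(A))=0$ if and only if $\widetilde{\nu}(A)=0$, i.e.\ that prepending a cylinder preserves $\nu$-nullity in \emph{both} directions; shift-invariance of $\nu$ gives only one direction, and for a general ergodic $\nu$ (e.g.\ a Markov measure with a forbidden transition) the reverse implication can fail, so the recursion degenerates to an inequality. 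For Bernoulli $\nu$ with full-support marginals---in particular the uniform measure, which is the case carrying the Lebesgue-measure conclusion---your argument is complete; the same subtlety is absorbed in the paper's citation rather than resolved there, so this is a point to make explicit rather than a defect peculiar to your approach.
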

We prove this theorem in Section \ref{x81} with the use of Theorem \ref{z68} stated below.
\begin{example}[Random right-angled Sierpi\'nski gasket] \label{u99}
    In this example we look at the random one-dimensional system which is the $45$-degree projection of the random right-angled Sierpi\'nski gasket, the attractor (see Figure \ref{a70}) of the following self-similar IFS in $\R^2$.
    \begin{equation*}
    \mathcal{S}:
    =\left\{S_{i}(\mathbf{x})=\frac{1}{2}\mathbf{x}+\mathbf{t}_i\right\}_{i=0}^{3},
    \end{equation*}
    where $\left\{\mathbf{t}_i\right\}_{i=0}^{3}$ is an enumeration of the set
  $
    \left\{0,\frac{1}{2}\right\}^2 \setminus \left\{\left(\frac{1}{2}, \frac{1}{2}\right)\right\}.
$
    We obtain the randomized set (with parameter $p$) as described in Section \ref{z14} for the deterministic IFS above which we denote by $\mathcal{G}_p$. Consider $\proj: \R^2\to\R$,
        $\proj(x,y):=-x+y$,
    the rescaled version of the $45$-degree projection. We investigate $\proj(\mathcal{G}_p)$, see Figure \ref{a70}. In this case  $L=N=2$ and
    $\mathcal{S}=\left\{ S_i(x)=\frac{1}{2}x+2(i-1) \right\}_{i=1}^3 $. The types are determined by the basic intervals,
    $J^{(0)}:=\left[ 0,2 \right]$ and $J^{(1)}:=[2,4]$.
    The environments are identified with
     $\overline{\pmb{\theta}}=(\theta_1,\theta_2,\dots   ) \in\{0,1\}^{\mathbb{N}}$.
    \begin{figure}
        \centering
        \includegraphics[width=0.7\linewidth]{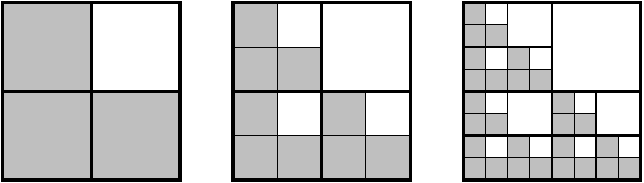}\includegraphics[width=0.3\linewidth]{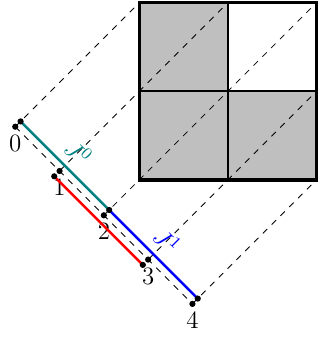}
        \caption{The approximations of the (deterministic) right-angled Sierpi\'nski gasket and the IFS corresponding to $\proj(\mathcal{G}_p)$.}\label{a70}
      \end{figure}
By \eqref{a90} we have
    \begin{equation*}
        \mathbf{B}_0=
        \begin{bmatrix}
        1 & 0 \\
        1 & 1  \\

        \end{bmatrix},
        \mathbf{B}_1=
        \begin{bmatrix}
        1 & 1 \\
        0 & 1  \\

        \end{bmatrix}.
        \end{equation*}
According to \cite{pollicott_Vytnova} the lower and upper bounds for the Lyapunov exponent $\lambda$ in this case is $0.3961$ and $0.3962$ respectively. Meaning that if we choose $p>\e{-\lambda}\geq 0.6729\dots$ then $\Lambda_{\mathcal{G}, p}$ has positive Lebesgue measure almost surely conditioned on non-extinction. Note that in \cite[Section 2.3]{OurPaper} we also gave a lower bound, but in that case the lower bound for $\Leb(\mathcal{G}_p)>0$ was much worse, $p>\frac{1}{\sqrt{2}}=0.707\dots$.
\end{example}

\section{Extinction of MBPREs}\label{x23}
In this paper we only consider a special setup for Multitype branching processes in random environments (MBPREs), for the general construction see for example \cite{bp_renv}, \cite{kaplan_branching}, \cite{zbMATH03738673}, \cite{vatutin2021multitype}. 

From now on we refer to distributions with their probability generating functions (pgfs). 
In our special case we are given a finite alphabet $[L]=\{0, 1,\dots,L-1 \}$ indexing the set of offspring distributions,
$$
\mathbf{F}=(\mathbf{f}_0,\dots ,\mathbf{f}_{L-1}),\quad \mathbf{f}_{\theta}=(f_{\theta}^{(0)},\dots,f_{\theta}^{(N-1)}),
$$
where for each $\theta\in[L]$ $j\in[N]$, $f_{\theta}^{(j)}$ is an $N$-dimensional probability distribution taking values from $\mathbb{N}_0^N$, the space of non-negative integer vectors with $N$ components.

Namely,
$$
f_{\theta}^{(j)}(\mathbf{s})=\sum_{\mathbf{z}\in \mathbb{N}_{0}^N}f_{\theta}^{(j)}[\mathbf{z}]\mathbf{s}^\mathbf{z},
\text{ for }
\mathbf{s}\in[0,1]^N,
$$
where for $(z_0,\dots, z_{N-1})=\mathbf{z}\in \mathbb{N}_0^N$,  $f_{\theta}^{(j)}[\mathbf{z}]$ denotes the probability that the value of the (vector) random variable distributed according to $f_{\theta}^{(j)}$ is $\mathbf{z}$.

We define $\Sigma:=[L]^{\N}$, all infinite words over our alphabet and the usual $\sigma$-algebra $\mathcal{A}$ on it. We consider the dynamical system $(\Sigma,\mathcal{A}, \sigma)$, where $\sigma$ is the left-shift map, namely for $\overline{\pmb{\theta}}=(\theta_1, \theta_2, \dots)\in \Sigma$, we have $\sigma(\overline{\pmb{\theta}})=(\theta_2, \theta_3,\dots)$. 
Assume that we are given $\overline{\theta}=\left(\theta_n\right)_{n\geq 1}  \in \Sigma$. We identify the infinite sequences $(\mathbf{f}_{\theta_1},\mathbf{f}_{\theta_2}, \dots)$, ($\mathbf{f}_{\theta_i}\in\mathbf{F}$) of probability distributions with their indices, namely $(\theta_1, \theta_2,\dots)=\overline{\pmb{\theta}}\sim(\mathbf{f}_{\theta_1},\mathbf{f}_{\theta_2}, \dots)$. 

With this identification, $\Sigma$ is the set of possible environments: For an environment $\overline{\pmb{\theta}}\in \Sigma$ let $(\mathbf{Z}_n(\pmb{\overline{\theta}}))_{n\geq 1}$ be the time-inhomogeneous branching process driven by the sequence of offspring distributions $(\mathbf{f}_{\theta_1},\mathbf{f}_{\theta_2}, \dots)$. Now we choose an ergodic (probability) measure $\nu$ on $(\Sigma,\mathcal{A}, \sigma)$---a distribution on the environments. 
The MBPRE corresponding to $\Sigma$ and $\nu$ is denoted by $\mathcal{Z}=(\mathbf{Z}_n)_{n\geq 1}$.

In the next chapter we will describe MBPREs associated to CISSIFs. For a short introduction about the construction of MBPREs see Appendix \ref{app1}.

For each $\theta\in[L]$ we define the expectation matrices 

\begin{equation}\label{x25}
	\mathbf{M}_{\theta}(i,j):=\frac{\partial f_{\theta}^{(i)}}{\partial s_j}(\mathbf{1}).
\end{equation}

 For $\overline{\pmb{\theta}}\in \Sigma$ let $q^{(k)}(\overline{\pmb{\theta}})$ denote the probability that the process $(\mathbf{Z}_n(\pmb{\overline{\theta}}))_{n\geq 1}$ starting with one individual of type-$k$ becomes extinct. We denote
 \begin{equation}\label{x59}
     \mathbf{q}(\overline{\pmb{\theta}}):=(q^{(0)}(\overline{\pmb{\theta}}), \dots, q^{(N-1)}(\overline{\pmb{\theta}})).
 \end{equation}

The following theorem regrading the extinction probability of an MBPRE described above is a straightforward consequence of \cite[Theorem 1.1]{2024MBPRE}.

Note that in the non-critical case, under the assumption of the following theorem there are only two possibilities; either 
\begin{itemize}
    \item $\mathbf{q}(\overline{\pmb{\theta}})=\mathbf{1}$ for $\nu$ almost every $\overline{\pmb{\theta}}$, or 
    \item $\mathbf{q}(\overline{\pmb{\theta}})<\mathbf{1}$ for $\nu$ almost every $\overline{\pmb{\theta}}$,
\end{itemize}
meaning that in both cases $\nu$ almost surely it can \textit{not} happen that starting with one individual of a given type the process dies out whereas starting from another type the process does not become extinct with positive probability.  

\begin{theorem}\label{z68}
    Let $L\geq 2$ and $\Sigma:=[L]^{\N}$ and $\nu $ is the uniform measure on $\Sigma$.

    Assume that for all $\theta\in [L]$ and $i\in [N]$ the elements ($\ell=0,\dots, N-1$) of the multivariate distributions $f_{\theta}^{(i)}$ are independent, and Binomial with parameters $k(\theta, i, \ell)\in\N$ and $p$. Namely, for $\mathbf{s}=(s_0,\dots, s_{N-1})\in[0,1]^{N}$, the all pgf has the following form: $f_{\theta}^{(i)}(\mathbf{s})=\prod_{\ell=0}^{N-1}(1-p+ps_{\ell})^{k(\theta, i, \ell)}$. 
    
    Assume further that $\mathcal{M}=\{\mathbf{M}_0, \dots, \mathbf{M}_{L-1}\}$ is a good set matrices in a sense of Definition \ref{x52}. 
    Then 
    \begin{enumerate}
		\item If $\lambda > 0$ then $\mathbf{q}(\overline{\pmb{\theta}})<\mathbf{1}$ for $\nu$-almost every $\overline{\pmb{\theta}}\in \Sigma$.
		\item If $\lambda < 0$ then $\mathbf{q}(\overline{\pmb{\theta}})=\mathbf{1}$ for $\nu$-almost every $\overline{\pmb{\theta}}\in \Sigma$.\label{x48}
		\item If $\lambda=0$ and there exists a $\theta\in[L]$ such that for every type $i$ the probability that a type $i$ individual gives birth to only $0$ or $1$ child is less than $1$ then $\mathbf{q}(\overline{\pmb{\theta}})=\mathbf{1}$ for $\nu$-almost every $\overline{\pmb{\theta}}\in \Sigma$.\label{x47}
	\end{enumerate}
\end{theorem}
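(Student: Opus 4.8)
The plan is to obtain all three cases as a direct application of the general dichotomy \cite[Theorem 1.1]{2024MBPRE}, so the entire task reduces to (i) verifying that our Binomial MBPRE with uniform environment meets the standing hypotheses of that theorem, (ii) identifying the Lyapunov exponent appearing there with the $\lambda=\lambda(\nu,\mathcal{M})$ of Definition \ref{x84}, and (iii) checking, in the critical regime, that the stated nondegeneracy condition implies the one used by the cited result. Steps (i) and (ii) are bookkeeping; step (iii) is where the real content lies.

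First I would make the expectation matrices explicit. Differentiating $f_{\theta}^{(i)}(\mathbf{s})=\prod_{\ell=0}^{N-1}(1-p+ps_{\ell})^{k(\theta,i,\ell)}$ in $s_j$ and evaluating at $\mathbf{s}=\mathbf{1}$ in \eqref{x25} gives $\mathbf{M}_{\theta}(i,j)=p\,k(\theta,i,j)$, so $\mathbf{M}_{\theta}=p\,\mathbf{B}_{\theta}$ with $\mathbf{B}_{\theta}(i,j)=k(\theta,i,j)\in\mathbb{N}_0$, exactly the coding matrices of \eqref{a90}. Thus the $\lambda$ in the statement is the top Lyapunov exponent of the i.i.d.\ products of the mean matrices, which is precisely the quantity controlling the dichotomy in the cited theorem. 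The goodness of $\mathcal{M}$ (Definition \ref{x52}) supplies allowability of each $\mathbf{M}_{\theta}$ and a strictly positive product along a positive-measure cylinder; since $\nu$ is the uniform (hence i.i.d., stationary and ergodic) measure on the finite alphabet $[L]$, and since each offspring law has bounded, finitely supported (Binomial) coordinates, every logarithmic and $x\log x$-type moment condition on the entries of $\mathbf{M}_{\theta}$ holds trivially because the relevant random variables are bounded. With these hypotheses in hand, cases (1) and (2) follow immediately from the sign of $\lambda$: positivity yields survival with positive probability and $\mathbf{q}(\overline{\pmb{\theta}})<\mathbf{1}$, negativity forces a.s.\ extinction and $\mathbf{q}(\overline{\pmb{\theta}})=\mathbf{1}$. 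The passage from the coordinatewise output of the cited theorem to the \emph{vector} statements here is exactly the all-or-nothing dichotomy noted before the theorem, which I would justify by using the strictly positive product from goodness to couple the extinction events of the different types.

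The hard part will be the critical case (3), and the main obstacle is translating the stated hypothesis---that some symbol $\theta\in[L]$ has, for every type $i$, strictly positive probability of producing at least two offspring---into the precise nondegeneracy condition under which \cite[Theorem 1.1]{2024MBPRE} certifies extinction at $\lambda=0$. The conceptual point is that a genuinely critical MBPRE can avoid extinction only when it degenerates, along $\nu$-typical environments, into a non-branching ``$0$-or-$1$ child'' single-lineage process, whose survival is then governed by a pure migration/death mechanism rather than by multiplication. The assumed $\theta$ excludes this: by ergodicity of the uniform measure it recurs with positive frequency $\nu$-a.s., and at each such occurrence it injects genuine branching variance for every type, which in the $\lambda=0$ regime drives the population to extinction just as a recurrent critical fluctuation does. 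I would make this rigorous by verifying that the stated condition forces strict positivity of the relevant second-order (variance) functional appearing in the cited theorem's critical hypothesis, placing the process in its extinction alternative and giving $\mathbf{q}(\overline{\pmb{\theta}})=\mathbf{1}$ for $\nu$-almost every $\overline{\pmb{\theta}}$.
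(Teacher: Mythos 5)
Your proposal takes exactly the paper's route: the paper offers no separate proof of this theorem, stating only that it ``is a straightforward consequence of \cite[Theorem 1.1]{2024MBPRE}'', which is precisely the reduction you carry out, with the identification $\mathbf{M}_{\theta}=p\,\mathbf{B}_{\theta}$, the trivially satisfied moment hypotheses (bounded Binomial offspring), and the matching of the critical-case nondegeneracy condition being the bookkeeping the paper leaves implicit. Since your verification steps are sound and the approach coincides with the paper's, there is nothing further to reconcile.
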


\section{The MBPRE of a CISSIFS}\label{x81}
For a less formal explanation through an illustrative example see \cite[Section 1.3]{2024MBPRE}. We use the notation of Section \ref{z22}.
In what follows we formally define the MBPRE corresponding to a given CISSIFS (defined in Section \ref{a77}) corresponding to the integer self-similar IFS $\mathcal{S}$ with contraction ratio $L^{-1}$ and basic intervals $J^{(U)}$, $U\in[N]$ (see Section \ref{z45}). 
\begin{itemize}
    \item The \textit{environments} are denoted by $\overline{\pmb{\theta}}=(\theta _1,\theta_2,\dots  )\in\Sigma =[L]^{\mathbb{N}}$.
   We choose a random environment according to
   an ergodic invariant measure $\nu$ on $(\Sigma ,\sigma)$. Most commonly we choose
   the uniform measure $\nu :=\left( \frac{1}{L},\dots  ,\frac{1}{L} \right)^{\mathbb{N}}$ on $\Sigma$ to investigate the Lebesgue measure of the attractor of the CISSIFS.
   \item There are $N$ \textit{types}, the  type space is identified with $[N]$. Each type corresponds to the image of the interval $J^{(V)}$, $V\in[N]$ in a way described below.
\end{itemize}
\subsection{The multitype branching process description of a CISSIFS}\label{x42}
We fix an environment $\overline{\pmb{\theta}}=(\theta _1,\theta _2,\dots  )\in\Sigma $, and $U\in [N]$ (the index of the basic interval we start with).
On level  $0$ we have $1$ individual of type-$U$ for a $U\in[N]$. That is
$\mathbf{Z}_0=\mathbf{e}_{U}$.

 We denote types by indices, namely, we define the level $n\geq 1$ individuals of type-$V$ in environment $\pmb{\theta}$ in the basic interval $J^{(U)}$: 
\begin{equation} 
\label{a89}
\mathcal{X}_{U,\overline{\pmb{\theta}},n}^{V} :=
\left\{
    \mathbf{i} \in \mathcal{E}_n:
\
S_{\mathbf{i}}(J^{(V)})=J _{\overline{\pmb{ \theta }}|_n}^{(U) }
\right\}.
\end{equation}
Let $\mathbf{i}\in \mathcal{X}_{U,\overline{\pmb{\theta}},n}^{V}$.
Then the type $W\in[N]$ offspring of the type-$V$ individual $\mathbf{i}$ in the environment 
$\overline{\pmb{\theta}}$, in the basic interval $J^{(U)}$  is 
\begin{equation}
\label{a88}
\mathcal{Y}^{(V)}_{\mathbf{i},U,\overline{\pmb{\theta}}|_{n+1}}(W):=
\left\{  \mathbf{i}i_{n+1}\in
\mathcal{E}_{n+1}
:\
S_{\mathbf{i}i_{n+1}}(J^{(W)})=
J _{\pmb{ \theta }|_{n+1}}^{(U)}
\right\}.
\end{equation}
Moreover, for an $\mathbf{i}\in \mathcal{X}_{U,\overline{\pmb{\theta}},n}^{V}$ let
\begin{equation}
\label{a87}
Y^{(V)}_{\mathbf{i},U,\overline{\pmb{\theta}}|_{n+1}}(W):=
\#
\mathcal{Y}^{(V)}_{\mathbf{i},U,\overline{\pmb{\theta}}|_{n+1}}(W).
\end{equation}
Then by definition,
\begin{equation}
\label{a83}
\bigcup \limits_{V\in[N]}
\bigcup \limits_{\mathbf{i}\in \mathcal{X}_{U,\overline{\pmb{\theta}},n}^{V}} 
\mathcal{Y}^{(V)}_{\mathbf{i},U,\overline{\pmb{\theta}}|_{n+1}}(W)
= 
\mathcal{X}_{U,\overline{\pmb{\theta}},n+1}^W.
\end{equation}

Observe that
\begin{align}\label{a85} 
    \text{If }
&\mathbf{i}\in \mathcal{X}_{U,\overline{\pmb{\theta}},n}^{V},\text{then }\\
&\mathbf{i}i_{n+1}\in\mathcal{X} _{U,\overline{\pmb{ \theta }},n+1}^W \Longleftrightarrow 
\left(
 S_{i_{n+1}}(J^{(W)})=J _{\theta _{n+1}}^{(V) } \quad \text{ and }\quad  \mathbf{i}i_{n+1}\in \mathcal{E}_{n+1}\right).
\end{align}
Note that the  event in the bracket in the previous line is independent for different $\mathbf{i}\in \mathcal{X}_{U,\overline{\pmb{\theta}},n}^{V}$, and it has probability $p$  if $S_{i_{n+1}}(J^{(W)})=J _{\theta _{n+1}}^{(V) }$. That is for every $\mathbf{i}\in [M]^n$
\begin{multline}\label{a84} 
\mathbb{P}\left(
    \mathbf{i}i_{n+1}\in
\mathcal{X}_{U,\overline{\pmb{\theta}},n+1}^W|
\mathbf{i}\in \mathcal{X}_{U,\overline{\pmb{\theta}},n}^V
    \right)
\\
=
\mathbb{P}\left(
        i_{n+1}\in 
        \mathcal{X}^{W}_{V,\theta _{n+1},1}
     \right) 
     =
\left\{
\begin{array}{ll}
p
,&
\hbox{if $S_{i_{n+1}}(J^{(W)})=J _{\theta _{n+1}}^{(V)}$  ;}
\\
0
,&
\hbox{otherwise.}
\end{array}
\right.
\end{multline}
Recall that the event $\{k\in \mathcal{X}^{W}_{V,\theta _{n+1},1}\}$ coincides with the event
$ 
\{ 
    k\in\mathcal{E}_{1}, \, S_{k}(J^{(W)})=J _{\theta _{n+1}}^{(V) } \}$, for every $k\in [M]$. 
    Now we shall recall the definition of the $\mathbf{B}$ matrices from  \eqref{a90} to see that 
    \eqref{a84}
 implies that for $\mathbf{i}\in \mathcal{X}_{U,\overline{\pmb{\theta}},n}^{V}$ we have
\begin{equation}
\label{a82}
\mathbb{E}\left(
    Y^{(V)}_{\mathbf{i},\overline{\pmb{\theta}}|_{n+1}}(W)
\right)=p \mathbf{B}_{\theta _{n+1}}(V,W).
\end{equation} 
Now for every
$\mathbf{i}\in \mathcal{X}_{U,\pmb{\theta},n}^{V}$
we form the random vectors
\begin{equation}
\label{a81}
\mathbf{Y}^{(V)}_{\mathbf{i},\overline{\pmb{\theta}}|_{n+1}}:=
\left(
    Y^{(V)}_{\mathbf{i},\overline{\pmb{\theta}}|_{n+1}}(0),\dots  ,
    Y^{(V)}_{\mathbf{i},\overline{\pmb{\theta}}|_{n+1}}(N-1)
 \right).
\end{equation}
By definition, these random vectors are independent and are independent of $\mathcal{X}_{U,\pmb{\theta},n}^{V}$ given $\mathbf{i}\in \mathcal{X}_{U,\pmb{\theta},n}^{V}$.
We consider
\begin{equation} 
\label{a80}
\mathbf{Z}_n^{(U)}(\overline{\pmb{\theta}})=(Z_n^{(U)}(\overline{\pmb{\theta}})(0),\dots, Z_n^{(U)}(\overline{\pmb{\theta}})(N-1)):=
\sum_{V\in[N]}
\sum_{\mathbf{i}\in \mathcal{X}_{U,\pmb{\theta},n}^{V} }
\mathbf{Y}^{(V)}_{\mathbf{i},\overline{\pmb{\theta}}|_{n+1}}.
\end{equation}
Then $\mathcal{Z}^{(U)}(\overline{\pmb{\theta}})=\left\{ \mathbf{Z}_n^{(U)} (\overline{\pmb{\theta}})\right\}_{n=0}^{ \infty  }$ is a multitype branching process in a varying environment with initial distribution $\mathbf{Z}_0=\mathbf{e}_{U}$. For this, given $\nu$, we can define the corresponding MBPRE $\mathbf{Z}^{(U)}$ for each $U\in[N]$.
 This process corresponds to the $U$-th interval ($J^{(U)}$) of CISSIFS $\Lambda_{\mathcal{S}, p}$. The correspondence can be described as follows. Recall that for a $U\in [N]$ we defined the projection 
$\Gamma^{(U)}(\overline{\pmb{\theta}}):= b_UL+\sum_{\ell =1}^{\infty }
\theta _\ell L^{-(\ell -1)}$ ($b_UL$ is the left-endpoint of the interval $J^{(U)}$).
This projection connects the environments of the MBPRE $\mathcal{Z}^{(U)}$ to the points of the attractor $\Lambda_{\mathcal{S}, p}$ contained in $J^{(U)}$.
Assume that we are given an $x\in \bigcup_{U\in [N]} J^{(U)}$. Then there exist a coding given by the index of the basic interval containing $x$
(say $U\in[N]$) and the $L$-adic coding of $x$ "inside" the interval ($\overline{\pmb{\theta}}$), namely there exists $U \in [N]$ and a $ \overline{\pmb{\theta}}=(\theta_1, \theta_2, \dots)\in \Sigma$ such that $x=\Gamma^{(U)}(\overline{\pmb{\theta}})$,
which coding is $\nu$-mod 0 unique. 
Clearly, by the definition of the process $\mathcal{Z}^{(U)}$, if the process does not die out in the varying environment $\overline{\pmb{\theta}}$ (for a given realization $\pmb{\omega}$) then $x\in \Lambda_{\mathbf{S}, p}(\pmb{\omega})$.

It follows from \eqref{a82}
that the expectation matrices for $\mathcal{Z}$ are
\begin{equation}
    \label{a79}
    \mathbf{M}_\theta :=p\cdot \mathbf{B}_\theta ,\quad \text{ for }
    \theta \in[L].
    \end{equation}
By Theorem \ref{z68} if
$\mathcal{M}=\left\{ \mathbf{M}_0,\dots  ,\mathbf{M}_{L-1} \right\}$
is good w.r.t. $\nu $,  and the Lyapunov exponent ($\lambda_{\mathcal{M}, \nu}$) corresponding to the expectation matrices is positive  then for $\nu$-almost every $\overline{\pmb{\theta}}\in \Sigma$ the processes $\mathcal{Z}^{(U)}$ does not die out with positive probability (i.e. $\mathbf{q}({\overline{\pmb{\theta}}})\neq \mathbf{1}$).
 Hence, there is an index $U\in [N]$
 for which we can find a
  $\widehat{\Sigma}\subset \Sigma$, with $\nu(\widehat{\Sigma})>0$ such that for all $\overline{\pmb{\theta}} \in \widehat{\Sigma}$, we have $q^{(U)}(\overline{\pmb{\theta}})\neq 1$.
 This means that $x=b_UL+\sum_{\ell =1}^{\infty }
\theta _\ell L^{-(\ell -1)}$, as explained above, will be contained in $\Lambda_{\mathcal{S}, p}$ with positive probability. In this way we have proved the following lemma:
\begin{lemma}\label{a78}
    Assume that we have a CISSIFS with contraction ratio $L^{-1}$ as it was defined in Section \ref{a77}, an ergodic measure $\nu $ on $(\Sigma ,\sigma )$, where $\Sigma :=[L]^{\mathbb{N}}$. The
     basic intervals are $J^{(k)}$, $k\in [N]$ (see $\eqref{a92}$) and the expectation matrices are
     $\mathbf{M}_{\theta}=p\mathbf{B}_{\theta}$, $\theta \in[L]$ (see \eqref{a90}). Assume that
     $\mathcal{M}=\{\mathbf{M}_0, \dots, \mathbf{M}_{L-1}\}$ is good w.r.t. $\nu $.
    Then there exist a set $K\subset \bigcup_{U\in[N]}J^{(U)}$
    of positive $\widetilde{\nu}$-measure (\eqref{a69})
     such that
    \begin{equation}
        \mathbb{P}(x\in \Lambda_{\mathcal{S}, p})>0, \quad
        \text{ for every } x\in K.
    \end{equation}
\end{lemma}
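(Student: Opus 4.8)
The plan is to read the conclusion directly off the multitype branching process description constructed in Section~\ref{x42}, combined with Theorem~\ref{z68}. The key conceptual point is that a \emph{single} point $x$ of the attractor does not involve a random environment: once $x\in\bigcup_{U\in[N]}J^{(U)}$ is fixed, both the index $U\in[N]$ of the basic interval containing it and the $L$-adic address $\overline{\pmb{\theta}}\in\Sigma$ with $x=\Gamma^{(U)}(\overline{\pmb{\theta}})$ are determined ($\nu$-mod $0$ uniquely). Hence the only remaining randomness is the coin tossing $\pmb{\omega}$, and the object governing membership of $x$ is the branching process $\mathcal{Z}^{(U)}(\overline{\pmb{\theta}})$ in the \emph{fixed} (but varying) environment $\overline{\pmb{\theta}}$, whose type-$U$ extinction probability is exactly $q^{(U)}(\overline{\pmb{\theta}})$.

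First I would record the one-sided correspondence already observed before the lemma: for fixed $x=\Gamma^{(U)}(\overline{\pmb{\theta}})$, if $\mathcal{Z}^{(U)}(\overline{\pmb{\theta}})$ survives under $\pmb{\omega}$, then its surviving individuals produce an infinite nested sequence of retained $L$-adic intervals $J^{(U)}_{\overline{\pmb{\theta}}|_n}$ which, by $x=\Gamma^{(U)}(\overline{\pmb{\theta}})$, shrink down to $x$, forcing $x\in\Lambda_{\mathcal{S},p}(\pmb{\omega})$. Consequently
\[
\mathbb{P}\bigl(x\in\Lambda_{\mathcal{S},p}\bigr)\;\ge\;\mathbb{P}_p\bigl(\mathcal{Z}^{(U)}(\overline{\pmb{\theta}})\text{ survives}\bigr)\;=\;1-q^{(U)}(\overline{\pmb{\theta}}),
\]
so it suffices to exhibit a set of positive $\widetilde{\nu}$-measure consisting of points whose address satisfies $q^{(U)}(\overline{\pmb{\theta}})<1$.

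Next I would invoke Theorem~\ref{z68}. By \eqref{a79} the expectation matrices of $\mathcal{Z}^{(U)}$ are $\mathbf{M}_\theta=p\,\mathbf{B}_\theta$, and by hypothesis $\mathcal{M}=\{\mathbf{M}_0,\dots,\mathbf{M}_{L-1}\}$ is good w.r.t.\ $\nu$; in the regime where the lemma is applied the Lyapunov exponent $\lambda(\nu,\mathcal{M})=\log p+\lambda(\nu,\mathcal{B})$ is positive (equivalently $p>\e{-\lambda(\nu,\mathcal{B})}$, the hypothesis of Theorem~\ref{z42}(1)), so case~(1) of Theorem~\ref{z68} applies and yields $\mathbf{q}(\overline{\pmb{\theta}})<\mathbf{1}$ for $\nu$-almost every $\overline{\pmb{\theta}}\in\Sigma$. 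Fixing any $U\in[N]$ and setting $\widehat{\Sigma}:=\{\overline{\pmb{\theta}}\in\Sigma:\,q^{(U)}(\overline{\pmb{\theta}})<1\}$, we obtain $\nu(\widehat{\Sigma})=1$.

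Finally I would define $K:=\Gamma^{(U)}(\widehat{\Sigma})\subset J^{(U)}\subset\bigcup_{V\in[N]}J^{(V)}$. Since $\Gamma^{(U)}$ is a $\nu$-mod $0$ bijection onto $J^{(U)}$ and $\widetilde{\nu}^{(U)}=\Gamma^{(U)}_*\nu$ by \eqref{a69}, we get $\widetilde{\nu}(K)\ge\widetilde{\nu}^{(U)}(K)=\nu(\widehat{\Sigma})>0$, while for each $x\in K$ the displayed inequality gives $\mathbb{P}(x\in\Lambda_{\mathcal{S},p})\ge 1-q^{(U)}(\overline{\pmb{\theta}})>0$, which is the assertion. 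I expect the only genuinely delicate points to be bookkeeping rather than conceptual: justifying rigorously that survival of the fixed-environment process forces $x$ into the attractor (a compactness/nesting argument exploiting that the $J^{(U)}_{\overline{\pmb{\theta}}|_n}$ contract to $x$), and verifying measurability of $\overline{\pmb{\theta}}\mapsto q^{(U)}(\overline{\pmb{\theta}})$ so that $\widehat{\Sigma}$ and $K$ are measurable and the pushforward identity is legitimate. The substantive input, namely converting goodness plus positivity of $\lambda$ into non-extinction for $\nu$-a.e.\ environment, is supplied entirely by Theorem~\ref{z68}.
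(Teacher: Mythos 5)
Your proposal is correct and follows essentially the same route as the paper: the MBPRE correspondence of Section \ref{x42} (survival of $\mathcal{Z}^{(U)}(\overline{\pmb{\theta}})$ forces $x=\Gamma^{(U)}(\overline{\pmb{\theta}})\in\Lambda_{\mathcal{S},p}$), then Theorem \ref{z68} to get non-extinction for $\nu$-a.e.\ environment, then $K=\Gamma^{(U)}(\widehat{\Sigma})$ with $\widetilde{\nu}(K)>0$ via \eqref{a69}. The only (harmless) differences are that you use the full-strength conclusion $\mathbf{q}(\overline{\pmb{\theta}})<\mathbf{1}$ a.e.\ (so $\nu(\widehat{\Sigma})=1$ for any fixed $U$) where the paper merely pigeonholes one index $U$ with $q^{(U)}(\overline{\pmb{\theta}})\neq 1$ on a set of positive measure, and that you make explicit the positivity of $\lambda(\nu,\mathcal{M})$, a hypothesis the lemma's statement omits but which the paper's own argument uses in exactly the same way.
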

In a special case of the uniform measure it was shown in \cite{OurPaper} that this implies that $\widetilde{\nu}(\Lambda_{\mathcal{S}, p})>0$ almost surely conditioned on non-extinction. For the convenience of the reader we present the proof in this more general case.

\begin{lemma}\label{d77}
    \begin{enumerate}
        \item
    If there exist a set $K$   of $\widetilde{\nu}$-positive measure  such that for $x\in K$:
        $\mathbb{P}(x\in\Lambda_{\mathcal{S},p})>0$, then
        \begin{equation}\label{d98}
            \mathbb{P}(\widetilde{\nu}(\Lambda_{\mathcal{S},p})>0)>0.
        \end{equation}
        \item If for $\widetilde{\nu}$ almost every $x$:
        $\mathbb{P}(x\in\Lambda_{\mathcal{S},p})=0$, then
        \begin{equation}\label{q95}
            \mathbb{P}(\widetilde{\nu}(\Lambda_{\mathcal{S},p})>0)=0.
        \end{equation}
    \end{enumerate}
    \end{lemma}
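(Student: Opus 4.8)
The plan is to deduce both statements from a single Fubini--Tonelli identity relating the expected $\widetilde{\nu}$-measure of the random attractor to the $\widetilde{\nu}$-integral of the hitting probabilities $x\mapsto\mathbb{P}(x\in\Lambda_{\mathcal{S},p})$. Writing the underlying probability space as $(\Omega_M,\mathcal{A},\mathbb{P})$ and setting $g(\pmb{\omega},x):=\ind\!\left[x\in\Lambda_{\mathcal{S},p}(\pmb{\omega})\right]$ for $x\in\bigcup_{U\in[N]}J^{(U)}$, the goal is the identity
\begin{equation*}
\mathbb{E}\!\left(\widetilde{\nu}(\Lambda_{\mathcal{S},p})\right)
=\int_{\bigcup_{U\in[N]}J^{(U)}}\mathbb{P}\!\left(x\in\Lambda_{\mathcal{S},p}\right)\,d\widetilde{\nu}(x),
\end{equation*}
from which both conclusions follow at once.

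First I would settle the only delicate point, namely the joint measurability of $g$ with respect to $\mathcal{A}\otimes\mathscr{B}$, where $\mathscr{B}$ denotes the Borel $\sigma$-algebra on $\R$; this is what legitimises exchanging the order of integration. Using that $\mathcal{E}_\infty(\pmb{\omega})$ is a closed (hence compact) subset of $\Sigma_M$ with $\Pi$ continuous, one verifies the deterministic description
\begin{equation*}
x\in\Lambda_{\mathcal{S},p}(\pmb{\omega})
\iff
\forall n\geq 1\ \exists\, i_1\cdots i_n\in\mathcal{E}_n(\pmb{\omega})\ \text{with}\ x\in S_{i_1\cdots i_n}(I),
\end{equation*}
the nontrivial ("$\Leftarrow$") direction being a König's-lemma argument on the finitely branching subtree of retained words whose cylinders contain $x$, combined with $\mathrm{diam}\,S_{i_1\cdots i_n}(I)\to 0$. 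This exhibits $\{(\pmb{\omega},x):x\in\Lambda_{\mathcal{S},p}(\pmb{\omega})\}=\bigcap_{n\geq1}\bigcup_{|w|=n}\bigl(\{\pmb{\omega}:w\in\mathcal{E}_n(\pmb{\omega})\}\times S_w(I)\bigr)$ as a countable intersection of countable unions of measurable rectangles---each event $\{w\in\mathcal{E}_n\}$ depends on only finitely many of the $X_{\cdot}$, and each $S_w(I)$ is a closed interval---so $g$ is $\mathcal{A}\otimes\mathscr{B}$-measurable.

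With measurability in hand I apply Tonelli's theorem, which is legitimate since $g\geq 0$ and $\widetilde{\nu}$ is a finite measure (of total mass $N$). Integrating $g$ first in $x$ gives $\int g(\pmb{\omega},\cdot)\,d\widetilde{\nu}=\widetilde{\nu}(\Lambda_{\mathcal{S},p}(\pmb{\omega}))$, while integrating first in $\pmb{\omega}$ gives $\int g(\cdot,x)\,d\mathbb{P}=\mathbb{P}(x\in\Lambda_{\mathcal{S},p})$, and equating the two repeated integrals yields the displayed identity. Both conclusions are then elementary facts about the nonnegative, bounded random variable $\widetilde{\nu}(\Lambda_{\mathcal{S},p})\in[0,N]$: for part (1) the hypothesis makes the integrand strictly positive on the set $K$ of positive $\widetilde{\nu}$-measure, so the right-hand side is strictly positive, whence $\mathbb{E}(\widetilde{\nu}(\Lambda_{\mathcal{S},p}))>0$ and therefore $\mathbb{P}(\widetilde{\nu}(\Lambda_{\mathcal{S},p})>0)>0$; for part (2) the integrand vanishes for $\widetilde{\nu}$-a.e.\ $x$, so the right-hand side is $0$, forcing $\mathbb{E}(\widetilde{\nu}(\Lambda_{\mathcal{S},p}))=0$ and hence $\widetilde{\nu}(\Lambda_{\mathcal{S},p})=0$ almost surely. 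The main (and essentially the only) obstacle is the joint-measurability verification; once the Fubini--Tonelli identity is available, the remainder is routine.
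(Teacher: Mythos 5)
Your proposal is correct and follows essentially the same route as the paper: the paper's proof is exactly the Fubini--Tonelli exchange yielding $\mathbb{E}\bigl(\widetilde{\nu}(\Lambda_{\mathcal{S},p})\bigr)=\int_{I}\mathbb{P}\bigl(x\in\Lambda_{\mathcal{S},p}\bigr)\,d\widetilde{\nu}(x)$, followed by the same elementary positivity/vanishing observations. The only difference is that you spell out the joint measurability of $(\pmb{\omega},x)\mapsto\ind\{x\in\Lambda_{\mathcal{S},p}(\pmb{\omega})\}$ (via the cylinder-cover description and K\H{o}nig's lemma), a point the paper takes for granted.
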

    \begin{proof}
        \eqref{d98} holds if and only if $\mathbb{E}(\widetilde{\nu}(\Lambda_{\mathcal{S},p}))>0$, and \eqref{q95} if and only if $\mathbb{E}(\widetilde{\nu}(\Lambda_{\mathcal{S},p}))=0$. Observe that
    \begin{align*}
        \mathbb{E}\left(\widetilde{\nu}\left(\Lambda_{\mathcal{S}, p}\right)\right)
         &=\int\limits_{\Omega}\widetilde{\nu }\left(\Lambda_{\mathcal{S},p}\left( \pmb{\omega}\right)\right)d\mathbb{P}\left(\pmb{\omega}\right)  =
        \int\limits_{\Omega} \int\limits_{I} \ind{\left\{x\in \Lambda_{\mathcal{S},p}\left( \pmb{\omega}\right)\right\}}d\widetilde{\nu}\left(x\right)d\mathbb{P}\left(\pmb{\omega}\right) \\
        & =
        \int\limits_{I}\int\limits_{\Omega}\ind{\left\{x\in \Lambda_{\mathcal{S},p}\left( \pmb{\omega}\right)\right\}} d\mathbb{P}\left(\pmb{\omega}\right)d\widetilde{\nu}\left(x\right)  =
        \int\limits_{I}\mathbb{P}\left(x\in \Lambda_{\mathcal{S}, p}\right)d\widetilde{\nu}\left(x\right).
    \end{align*}
    The assertion of the first part of the lemma, $\widetilde{\nu}(\{x:\mathbb{P}(x\in\Lambda_{\mathcal{S},p})>0\})>0$, implies that $\mathbb{E}(\widetilde{\nu}(\Lambda_{\mathcal{S},p}))>0$. While the assertion of the second part $\widetilde{\nu}(\{x:\mathbb{P}(x\in\Lambda_{\mathcal{S},p})>0\})=0$ implies that $\mathbb{E}(\widetilde{\nu}(\Lambda_{\mathcal{S},p}))=0$.
    \end{proof}

    \begin{proof}[Proof of Theorem \ref{z42}]
The first part of the theorem follows from the combination of Lemma \ref{a78}
and a standard $0-1$-type lemma on percolation sets (see for example \cite[Lemma 3.9]{OurPaper}).

The second part is again the combination of Lemma \ref{d77} and the secodnd part of Theorem \ref{z68}.
    \end{proof}

\section{Non-existence of interior points and positive Lebesgue measure}\label{x16}
\subsection{Non-existence of interior points}
In this section we consider certain conditions under which a CISSIFS (defined in Section \ref{a77}) does not contain an interval. We follow the notation of \cite{jungers_2009}.
\begin{definition}[Lower spectral radius] \label{q91}
     Consider the set of matrices $\mathcal{B}=\{\mathbf{B}_0, \dots, \mathbf{B}_{L-1}\}$ and a sub-multiplicative matrix norm $\|\cdot\|_*$.
    \begin{equation*}
        \widecheck{\rho}_{n}(\mathcal{B}, \|\cdot\|_*):=\inf\{ \|\mathbf{B}_{i_1}\cdots \mathbf{B}_{i_n}\|_*^{\frac{1}{n}},\, \mathbf{B}_{i_j}\in \mathcal{B}, \: 1\leq j \leq n\}
    \end{equation*}
    then the \textit{lower spectral radius corresponding to $\mathcal{B}$} is
    \begin{equation*}
        \widecheck{\rho}(\mathcal{B})= \lim_{n\to \infty} \widecheck{\rho}_{n}(\mathcal{B}, \|\cdot\|_*),
    \end{equation*}
where the limit above exists, see \cite{jungers_2009}.
\end{definition}
Recall the notation of Section \ref{z43}, let $\mathcal{S}=\left\{ S_i \right\}_{i=0}^{M-1 }$ be an ISSIFS of the form of \eqref{a99}. Let $p\in(0,1]$. We write $\Lambda _{\mathcal{S},p}$ for the  corresponding coin tossing integer self-similar set defined in Section \ref{a77}.
Let $\mathcal{B}:=\left\{ \mathbf{B}_0,\dots  ,\mathbf{B}_{L-1} \right\}$, where the matrices $\mathbf{B}_\theta $, $\theta \in[L]$  were defined in \eqref{a90}.

\begin{proposition}\label{x94}
    If $p<(\widecheck{\rho}(\mathcal{B}))^{-1}$, then $\Lambda _{\mathcal{S},p}$ does not contain an interval almost surely.
\end{proposition}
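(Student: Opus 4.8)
The plan is to reduce the statement to a first–moment estimate along a single, carefully chosen branch of the $L$-adic subdivision, where the lower spectral radius supplies a subcritical growth rate. Throughout I fix a submultiplicative norm $\|\cdot\|_\ast$ and use that $\widecheck{\rho}(\mathcal{B})$ is independent of it (see \cite{jungers_2009}).

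\textbf{Step 1 (reduction to $L$-adic intervals).} First I would note that $\Lambda_{\mathcal{S},p}$ contains an interval if and only if it contains one of the $L$-adic intervals $J^{(U)}_{\pmb{\theta}}$ for some $n\ge 1$, $U\in[N]$, $\pmb{\theta}\in[L]^n$: one direction is trivial, and for the other, any nondegenerate interval contained in $\Lambda_{\mathcal{S},p}\subset\bigcup_U J^{(U)}$ contains some $J^{(U)}_{\pmb{\theta}}$ as soon as the level $n$ is large enough that $L^{-(n-1)}$ is below its length. As this is a countable union of events, it suffices to prove $\mathbb{P}\big(J^{(U)}_{\pmb{\theta}}\subset\Lambda_{\mathcal{S},p}\big)=0$ for each fixed triple $(n,U,\pmb{\theta})$.

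\textbf{Step 2 (a geometric necessary condition).} The key deterministic observation is that if $J^{(U)}_{\pmb{\theta}}\subset\Lambda_{\mathcal{S},p}(\pmb{\omega})$, then for every $m\ge 1$ and every $\pmb{\eta}\in[L]^m$ the descendant $J^{(U)}_{\pmb{\theta}\pmb{\eta}}$ is the image of a \emph{retained} cylinder, i.e.\ there are $\mathbf{i}\in\mathcal{E}_{n+m}(\pmb{\omega})$ and $V\in[N]$ with $S_{\mathbf{i}}(J^{(V)})=J^{(U)}_{\pmb{\theta}\pmb{\eta}}$. Indeed, choosing any interior point $x$ of $J^{(U)}_{\pmb{\theta}\pmb{\eta}}$, from $x\in\Lambda_{\mathcal{S},p}=\Pi(\mathcal{E}_\infty)$ there is $\mathbf{i}\in\mathcal{E}_\infty$ with $\Pi(\mathbf{i})=x$, and then $x\in S_{\mathbf{i}|_{n+m}}(\Lambda)\subset\bigcup_V S_{\mathbf{i}|_{n+m}}(J^{(V)})$; each $S_{\mathbf{i}|_{n+m}}(J^{(V)})$ is an $L$-adic interval of level $n+m$, so the one containing the interior point $x$ must coincide with $J^{(U)}_{\pmb{\theta}\pmb{\eta}}$. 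Writing $N_{\pmb{\eta}}:=\#\{\mathbf{i}\in\mathcal{E}_{n+m}:S_{\mathbf{i}}(J^{(V)})=J^{(U)}_{\pmb{\theta}\pmb{\eta}}\text{ for some }V\}$, this gives $\{J^{(U)}_{\pmb{\theta}}\subset\Lambda_{\mathcal{S},p}\}\subset\{N_{\pmb{\eta}}\ge 1\}$ for every $\pmb{\eta}$.

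\textbf{Step 3 (subcritical branch and conclusion).} Since $p\,\widecheck{\rho}(\mathcal{B})<1$, fix $\epsilon>0$ with $p(\widecheck{\rho}(\mathcal{B})+\epsilon)<1$. By Definition \ref{q91} there are $m\ge 1$ and a word $\pmb{\eta}^\ast\in[L]^m$ with $\|\mathbf{B}_{\pmb{\eta}^\ast}\|_\ast\le(\widecheck{\rho}(\mathcal{B})+\epsilon)^m$. I then follow the single branch of environments $(\pmb{\eta}^\ast)^k$. By \eqref{z44} the number of level-$(n+mk)$ words with image $J^{(U)}_{\pmb{\theta}(\pmb{\eta}^\ast)^k}$ from source $V$ is $\big(\mathbf{B}_{\pmb{\theta}}(\mathbf{B}_{\pmb{\eta}^\ast})^{k}\big)(U,V)$, each lying in $\mathcal{E}_{n+mk}$ with probability $p^{\,n+mk}$, so, using submultiplicativity of $\|\cdot\|_\ast$ and a single norm-equivalence constant $c_1$ bounding a row sum by $c_1\|\cdot\|_\ast$,
\[
\mathbb{E}\!\left[N_{(\pmb{\eta}^\ast)^k}\right]
= p^{\,n+mk}\sum_{V\in[N]}\big(\mathbf{B}_{\pmb{\theta}}(\mathbf{B}_{\pmb{\eta}^\ast})^{k}\big)(U,V)
\le C\,\big(p\,(\widecheck{\rho}(\mathcal{B})+\epsilon)\big)^{mk},
\]
where $C=c_1\,p^{n}\,\|\mathbf{B}_{\pmb{\theta}}\|_\ast$. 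Since $p(\widecheck{\rho}(\mathcal{B})+\epsilon)<1$ the right-hand side tends to $0$. By Markov's inequality $\mathbb{P}(N_{(\pmb{\eta}^\ast)^k}\ge 1)\le\mathbb{E}[N_{(\pmb{\eta}^\ast)^k}]\to 0$, while Step 2 gives $\{J^{(U)}_{\pmb{\theta}}\subset\Lambda_{\mathcal{S},p}\}\subset\{N_{(\pmb{\eta}^\ast)^k}\ge 1\}$ for every $k$; hence $\mathbb{P}(J^{(U)}_{\pmb{\theta}}\subset\Lambda_{\mathcal{S},p})\le\inf_k\mathbb{E}[N_{(\pmb{\eta}^\ast)^k}]=0$. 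Summing over the countably many triples $(n,U,\pmb{\theta})$ yields that $\Lambda_{\mathcal{S},p}$ contains no interval almost surely.

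I expect the main obstacle to be Step 2: one must argue carefully that covering an $L$-adic interval forces, at every deeper level and in every $L$-adic descendant, a retained cylinder whose image is \emph{exactly} that descendant. This is where the rigid ISSIFS structure — contraction ratio $1/L$, integer/rational translations, and cylinder images being genuine $L$-adic intervals — is essential, guaranteeing that two level-$(n+m)$ $L$-adic intervals either coincide or have disjoint interiors. Everything after that is a routine first-moment argument, the only mild care being the invocation of the norm-independence of $\widecheck{\rho}$ so as to work with a single convenient submultiplicative norm.
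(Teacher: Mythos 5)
Your proposal is correct and takes essentially the same approach as the paper: both extract a near-minimizing word for $\widecheck{\rho}(\mathcal{B})$ from Definition \ref{q91}, follow the branch obtained by repeating that word after an arbitrary finite prefix, show via \eqref{z44} and submultiplicativity that the expected number of retained cylinders mapping onto the corresponding $L$-adic intervals tends to $0$, apply Markov's inequality, and conclude by a countable union over prefixes. The only cosmetic differences are that the paper phrases the count through the branching process $\mathbf{Z}^{(U)}$ with the entry-sum norm and exhibits a countable dense set of almost surely omitted points rather than showing each $L$-adic interval is almost surely not contained, and your Step 2 (where one should pick a non-$L$-adic interior point to rule out the endpoint coincidence) is handled at the same level of detail as in the paper's own argument.
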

\begin{proof}
    We use a standard argument similar to the one used in \cite{zbMATH05255663}. In the definition of the lower spectral radius (Definition \ref{q91}) for a non-negative matrix $\mathbf{A}$ let $\|\mathbf{A} \|_*=\|\mathbf{A}\|:=\sum_{i, j}A_{i,j}$.
    Since $p\cdot \widecheck{\rho}(\mathcal{B})<1$, we can choose an $\varepsilon >0$ such that $p\cdot \widecheck{\rho}(\mathcal{B})<1-2\varepsilon$. For this $\varepsilon$ there exists an $\widetilde{N}$, such that if $n>\widetilde{N}$, then
    \begin{equation}
        p\cdot \widecheck{\rho}_{n}(\mathcal{B}, \|\cdot\|)<(1-\varepsilon).
    \end{equation}
    Fix $m=\widetilde{N}+1$, for this $m$, by the previous observation, there exists a $\pmb{\theta}=(\theta_1, \dots, \theta_m)$ such that $p\|\mathbf{B}_{\theta_1}\cdots \mathbf{B}_{\theta_n}\|^{\frac{1}{n}}<(1-\varepsilon)$, hence for the expectation matrices,
    \begin{equation}
        \|\mathbf{M}_{\theta_1}\cdots \mathbf{M}_{\theta_n}\|<(1-\varepsilon)^n.
    \end{equation}
    Let $\pmb{\theta}^n$ denote the vector which we get by concatenating $\pmb{\theta}$ with itself $n$-times. Then clearly,
$\lim\limits_{n \to \infty}\left\|\mathbf{M}_{\pmb{\theta}^n}\right\|=0$.
By the submultiplicativity  of the matrix norm it follows that for any $\mathbf{c}_k=(c_1, \dots, c_k)\in [L]^k$:
\begin{equation}\label{y7}
    \lim_{n \to \infty} \left\|\mathbf{M}_{\mathbf{c}_k\pmb{\theta}^n}\right\|=0.
\end{equation}
Let $\mathbf{c}_{k}\in [L]^k$ be given, denote $\overline{\pmb{\theta}}=(\mathbf{c}_{k}, \pmb{\theta}, \pmb{\theta}, \dots)$ and recall that $\sum_{U\in[N]}\mathbf{Z}^{(U)}_{k+n}(\overline{\pmb{\theta}})$ denote the number of level $k+n$ cylinders intersecting
$\bigcup_{U \in [N]}J^{(U)}_{\mathbf{c}_k\pmb{\theta}^n}$.
\begin{equation*}
    \mathbb{E}\big(\sum_{U\in[N]}\mathbf{Z}^{(U)}_{k+n}(\overline{\pmb{\theta}})\big)= \|\mathbf{M}_{\mathbf{c}_k\pmb{\theta}^n}\|\to 0 \text{ as } n\to \infty ,
\end{equation*}
thus by Markov's inequality:
\begin{equation*}
    \mathbb{P}\big(\sum_{U\in[N]}\mathbf{Z}^{(U)}_{k+n}(\overline{\pmb{\theta}})\geq 1\big)\leq \mathbb{E}\big(\sum_{U\in[N]}\mathbf{Z}^{(U)}_{k+n}(\overline{\pmb{\theta}})\big)\to 0 \text{ as } n\to \infty.
\end{equation*}
In this way the points
\begin{equation*}
    \bigcup_{U\in [N]}\bigcap\limits_{n\geq 1}J^{(U)}_{\mathbf{c}_k\pmb{\theta}^n}
\end{equation*}
are not contained in $\Lambda_{\mathcal{S}, p}$ with probability one.
By varying $\mathbf{c}_k$ we get a countable dense subset of $\bigcup_{U\in [N]}J^{(U)}$ which is not contained in $\Lambda_{\mathcal{S}, p}$ with probability one, hence it can not contain an interval.
\end{proof}
\subsection{Non-existence of interior points and positive Lebesgue measure}

In this section we examine the following question; \textit{under what conditions can we guarantee the existence of a parameter interval (of $p$) such that for any probability from the interval we have that the random attractor, $\Lambda_{\mathcal{S}, p}$ has positive Lebesgue measure almost surely, conditioned on non-extinction, but it does not contain interior points almost surely.} It follows from Proposition \ref{x94} and Theorem \ref{z42} that under the conditions of Theorem \ref{z42} if the Lyapunov exponent (see Definition \ref{x84}) with respect to the uniform measure (denoted by $\lambda$) is strictly greater than the logarithm of the lower spectral radius, see \ref{q91} (denoted by $\log \widecheck{\rho}$), then $(\e{-\lambda}, \widecheck{\rho}^{-1})$ is a proper parameter interval, for which the desired properties hold.
One example for such behaviour is Example \ref{u99}. In this case the lower spectral radius equals to $1$, hence $\log \widecheck{\rho}=0$ but recall that $0.3961<\lambda<0.3962$.

An example, where such parameter interval does not exist appears in \cite{Simon_Rams_2014}. From this work it follows that for any projections of homogeneous Mandelbrot percolations the parameter interval does not exists. 

A possible way to approach the problem of having empty interior and positive Lebesgue measure is to study the set of matrices $\mathfrak{B}=\{\mathbf{B}_0, \dots, \mathbf{B}_{L-1}\}$ determined by the self-similar integer IFS. The matrices we get when considering this special family of IFSs are always non-negative and their sum is a primitive matrix. The pressure functions of matrices with the above property are investigated in \cite{barany_rams_2014} and a more general class for example in \cite{Feng_Lau_2002}, \cite{Feng_2004}, \cite{Feng_2007}. The above-mentioned pressure function is the following:
\begin{equation}
    P(q):=\lim_{n\to \infty}\frac{1}{n}\log \sum_{\pmb{\theta}\in[L]^n}\|\mathbf{B}_{\pmb{\theta}}\|_{*}^q.
\end{equation}
Under the non-negativity and primitivity of the sum assumptions mentioned above the defining limit exists for all $q\in\mathbb{R}$, and the pressure function is continuous, convex and monotone increasing for all $q\in \R$ and continuously differentiable for $q>0$ (see for example \cite{barany_rams_2014}).
The relevance of the pressure for us described as follows.
Firstly the asymptote of the pressure function at $-\infty$ is the logarithm of the lower spectral radius (see \cite[Lemma 2.3 (b)]{Feng_2007}), namely:
\begin{equation}
    \log \widecheck{\rho}(\mathfrak{B})=\lim_{q\to -\infty}\frac{P(q)}{q}.
\end{equation}
On the other hand it has been shown in \cite[Proof of Lemma 4.9.]{barany_rams_2014}, that the right derivative of the pressure function at $0$ is the Lyapunov exponent.
\begin{lemma}\label{x83}
    Assume that the set of matrices $\mathcal{B}$ is good (see Definition \ref{x52}) with respect to the uniform measure on $\Sigma$. If the negative part of the pressure function is not a straight line, then there exists a parameter interval $(p_0, p_1)$ such that for every $p_0<p<p_1$ the random attractor $\Lambda_{\mathcal{S}, p}$ has positive Lebesgue measure almost surely conditioned on non-extinction, but it does not contain an interior point almost surely.
\end{lemma}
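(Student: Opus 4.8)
The plan is to reduce the whole statement to a single strict inequality between two numbers read off the pressure function, and then to extract that inequality from convexity. Recall from the discussion preceding the lemma that, when $\nu$ is the uniform measure, $\widetilde{\nu}$ is a constant multiple of Lebesgue measure on $\bigcup_U J^{(U)}$; hence Theorem \ref{z42}(1) gives that $\Lambda_{\mathcal{S},p}$ has positive Lebesgue measure (a.s.\ conditioned on non-extinction) as soon as $p>\e{-\lambda}$, while Proposition \ref{x94} gives that $\Lambda_{\mathcal{S},p}$ contains no interval (a.s.) as soon as $p<\widecheck{\rho}^{-1}$. Consequently every $p\in(\e{-\lambda},\widecheck{\rho}^{-1})$ has both desired properties, and the only thing left to prove is that this interval is non-empty, i.e.\ that $\log\widecheck{\rho}<\lambda$.

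I would next record the two characterizations of $\lambda$ and $\log\widecheck{\rho}$ in terms of $P$. Goodness of $\mathcal{B}$ forces $\sum_\theta\mathbf{B}_\theta$ to be primitive (a strictly positive product is dominated entrywise by a power of the sum), so $P$ has the stated regularity: it is convex and non-decreasing on $\mathbb{R}$ and differentiable for $q>0$. As quoted before the lemma, $\lambda=P'_+(0)$ (the right derivative at $0$) and $\log\widecheck{\rho}=\lim_{q\to-\infty}P(q)/q$. Since $P$ is convex, its one-sided derivatives are non-decreasing and $a_-:=\lim_{q\to-\infty}P'_+(q)=\inf_{q}P'_+(q)$ exists; moreover $\lim_{q\to-\infty}P(q)/q=a_-$, because dividing $P(q)=P(0)+\int_0^q P'$ by $q$ kills the constant term and leaves the average of $P'$ over $[q,0]$, which tends to $a_-$. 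Thus $\log\widecheck{\rho}=a_-$, and monotonicity of $P'_+$ gives the universal bound $\log\widecheck{\rho}=a_-\le P'_+(0)=\lambda$. (Since $P$ is non-decreasing we have $P'_+\ge 0$, so $a_-\ge 0$, i.e.\ $\widecheck{\rho}\ge 1$; hence the candidate interval automatically lies inside $(0,1]$.)

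The strict inequality then comes from the hypothesis via the contrapositive. If the negative part of $P$ is not a straight line, then $P'_+$ is not constant on $(-\infty,0)$, so there exist $t_1<t_2\le 0$ with $P'_+(t_1)<P'_+(t_2)$; combining this with monotonicity of $P'_+$ yields
\[
\log\widecheck{\rho}=a_-\le P'_+(t_1)<P'_+(t_2)\le P'_+(0)=\lambda .
\]
Hence $\log\widecheck{\rho}<\lambda$, equivalently $\e{-\lambda}<\widecheck{\rho}^{-1}$, so the interval of the first paragraph is a genuine non-degenerate subinterval of $(0,1]$ (the same strictness also forces $\lambda>0$, hence $\e{-\lambda}<1$), and the lemma follows. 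Conversely, equality $\log\widecheck{\rho}=\lambda$ would force $P'_+$ to be constant on $(-\infty,0]$, i.e.\ the negative part to be affine, which matches the picture in Figure \ref{x85}.

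I expect the main obstacle to be bookkeeping rather than conceptual: the entire content is the two cited identities plus the convexity comparison, but one must be careful that at $q=0$ only the right derivative is available (the pressure is asserted $C^1$ only for $q>0$), so every comparison has to be phrased through the monotone one-sided derivative $P'_+$, and one must confirm that the produced interval really lies in the admissible range $(0,1]$ — which, as noted, is automatic from $\widecheck{\rho}\ge 1$ and $\lambda>0$.
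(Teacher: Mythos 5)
Your proposal is correct and follows essentially the same route as the paper, which states Lemma \ref{x83} as a direct consequence of the preceding discussion: Theorem \ref{z42} and Proposition \ref{x94} yield the candidate interval $(\e{-\lambda},\widecheck{\rho}^{-1})$, and its non-emptiness follows from the quoted identities $\lambda=P'_+(0)$ and $\log\widecheck{\rho}=\lim_{q\to-\infty}P(q)/q$ together with convexity of $P$. The only difference is that you spell out the convexity bookkeeping (monotone one-sided derivatives, the asymptotic-slope computation, and $\widecheck{\rho}\ge 1$) that the paper leaves implicit, which is a faithful completion rather than a different argument.
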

\subsubsection{Partial solution}

In some special cases it is known from Theorem \ref{x90} below that the negative part of the pressure function is strictly convex. The Theorem requires that all matrices are invertible, which in general does not hold for the matrices defined in \eqref{a90} in section \ref{z45} corresponding to integer self-similar IFSs on the line. See for example Example 2.2 in \cite{OurPaper} corresponding to the projection of the Menger sponge to the space diagonal of the unit cube in $\mathbb{R}^3$ (note however that in this case we know that the interesting parameter interval exists by different considerations). In some cases however the conditions of the theorem are satisfied (see later Example \ref{x88}). 
Before stating the theorem and present the examples, we first introduce some notions, mostly based on Viana's lecture on Lyapunov exponents \cite{viana_lectures}.

Recall that we denote by $(\Sigma, \sigma)$ the (one-sided) shift space  over the finite alphabet $[L]$.

Let $ \mathbb{M}(\mathbb{R}, N)$ denote the set of $N\times N$ real matrices. For an $A:\Sigma\to \mathbb{M}(\mathbb{R}, N)$ we define the matrix cocycle 
\begin{equation}
    A(\overline{\pmb{\theta}}, n):=A(\sigma^{n-1}(\overline{\pmb{\theta}}))\cdots A(\overline{\pmb{\theta}}),
\end{equation}
for $\overline{\pmb{\theta}}\in\Sigma$ and $n\in \mathbb{N}$.
From now on we restrict ourselves from $A:\mathbb{M}(\mathbb{R}, N) \to \mathbb{M}(\mathbb{R}, N)$ to $A:\GL(N, \mathbb{R})\to \GL(N, \mathbb{R})$, cocycles of the $N\times N$ invertible, real matrices.

We say that $A$ is a \textit{one-step cocycle}, if $A$ only depends on the first letter of $\overline{\pmb{\theta}}$. In this case there is a natural correspondence between one-step cocycles and the elements of the set $\mathscr{G}_{N, L}=\{\mathfrak{B}=\{\mathbf{B}_0, \dots, \mathbf{B}_{L-1}\},\mathbf{B}_{i}\in  \mathbb\GL(N, \mathbb{R})\}$, hence for $\overline{\pmb{\theta}}=(\theta_1, \theta_2, \dots)$
\begin{equation}
    A(\overline{\pmb{\theta}},n)=\mathbf{M}_{\theta_n} \cdots \mathbf{M}_{\theta_1}.
\end{equation}
In this case we write $A\sim \mathfrak{B}$.

Given a matrix $\mathbf{B}\in\GL(N, \mathbb{R})$,
\begin{notation}
    \begin{enumerate}
    \item Let $\sigma_1\geq \dots \geq \sigma_N$ the singular-values of the matrix $B$. The eccentricity of the matrix is,
    \begin{equation}
    \Ecc(\mathbf{B})=\min_{1\leq\ell<N} \frac{\sigma_\ell}{\sigma_{\ell+1}}.
    \end{equation}
    \item The Grassmanian manifold is denoted by $\Grass(\ell, N)$.
    \end{enumerate}
\end{notation}

\begin{definition}[Pinching, Twisting]\label{x18}
    The one-step cocycle $A\sim\mathfrak{B}=\{\mathbf{B}_1, \dots, \mathbf{B}_L\}$ is
    \begin{enumerate}
        \item
         \textit{pinching} if there exists a product $\mathbf{B}_{i_1}\cdots \mathbf{B}_{i_n}$, $\mathbf{B}_{i_j}\in \mathfrak{B}$ with arbitrarily large eccentricity $\Ecc(\mathbf{B}_{i_1}\cdots \mathbf{B}_{i_n})$.
         \item
         \textit{Twisting} if for any $F \in \Grass(\ell, d)$ and any finite family $G_1, ..., G_K$ of elements of $\Grass(d-\ell, d)$ there exists a product $\widehat{B}=\mathbf{B}_{i_1}\cdots \mathbf{B}_{i_n}$, $\mathbf{B}_{i_j}\in \mathfrak{B}$ such that $\widehat{B}(F) \cap G_i = \{0\}$ for all $i=1, \dots, K$.
    \end{enumerate}
\end{definition}

\begin{definition}[1-typical]
    A one-step cocycle is 1-typical if it is \textit{pinching} and \textit{twisting}.
\end{definition}

\begin{corollary}[Corollary of {\cite[Theorem 9.1]{Tom_rush_2023}}]
    \label{x90}
    Consider a 1-typical one-step cocycle, then the corresponding pressure function is either affine in its domain or strictly convex in a neighbourhood of 0.
\end{corollary}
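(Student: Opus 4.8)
The plan is to derive Corollary \ref{x90} from the real-analyticity of the pressure function furnished by \cite[Theorem 9.1]{Tom_rush_2023}, combined with an elementary dichotomy for convex analytic functions. Recall that for the matrices at hand the pressure $P(q)$ is already known to exist for every $q\in\R$ and to be convex. The hypothesis that the one-step cocycle $A\sim\mathfrak{B}$ is $1$-typical, that is, pinching and twisting (Definition \ref{x18}), is exactly the standing assumption of \cite[Theorem 9.1]{Tom_rush_2023}. The first step is therefore to identify the operator-norm pressure $P(q)=\lim_{n}\frac{1}{n}\log\sum_{\pmb{\theta}\in[L]^n}\|\mathbf{B}_{\pmb{\theta}}\|_*^q$ used here with the quantity whose analyticity that theorem asserts, thereby obtaining that $P$ is real-analytic on an open interval $U\ni 0$. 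This is the only place the $1$-typical hypothesis enters: the general theory recalled above yields convexity on all of $\R$ but only $C^1$-regularity for $q>0$, whereas pinching and twisting upgrade the regularity to analyticity across $q=0$.

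Granting analyticity on $U$, the remaining argument is purely real-analytic. Convexity gives $P''\geq 0$ on $U$, and $P''$ is itself real-analytic; by the identity theorem its zero set is either all of $U$ or discrete. In the first case $P$ is affine on $U$, and since $P$ is analytic on its whole connected domain, the function $P$ minus its affine part vanishes on the open set $U$ and hence vanishes identically, so $P$ is affine on its entire domain. In the second case $P''$ does not vanish on any subinterval of $U$, and I claim $P$ is then strictly convex on $U$: were it not, convexity of $P$ would force $P$ to be affine on some nondegenerate subinterval (equality in the convexity inequality at an interior point of a convex function propagates to the whole segment), whence $P''\equiv 0$ there, contradicting discreteness. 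This yields strict convexity in a neighbourhood of $0$, completing the dichotomy. The mechanism behind the two alternatives is the familiar one from thermodynamic formalism: the second alternative corresponds to a strictly positive asymptotic variance of the potential $\log\|\mathbf{B}_{\overline{\pmb{\theta}}}\|_*$, while the first corresponds to its degenerate cohomologous-to-a-constant case, which is $q$-independent and hence forces affinity throughout $\R$ even if analyticity were only known locally near $0$.

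I expect the main obstacle to be the first step, namely matching the pressure $P(q)$ of this paper with the precise object to which \cite[Theorem 9.1]{Tom_rush_2023} applies and, in particular, securing analyticity on a genuinely two-sided neighbourhood of $0$ rather than only on the half-line $\{q>0\}$ where $C^1$-regularity was already available. Once analyticity of $P$ near $0$ is established, the convex-analytic dichotomy above is routine.
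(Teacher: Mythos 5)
The paper offers no argument for this statement: it is recorded as an immediate consequence of \cite[Theorem 9.1]{Tom_rush_2023}, so there is no internal proof to compare yours against, and the only question is whether your reconstruction from that citation is itself sound. Your convex-analytic core is fine: on any open interval $U$ on which $P$ is real-analytic, convexity gives $P''\geq 0$, the identity theorem makes the zero set of $P''$ either all of $U$ or discrete, and in the discrete case $P$ is strictly convex on $U$ (failure of strict convexity would force an affine piece, hence $P''\equiv 0$ on a nondegenerate subinterval). That mechanism is correct and is indeed the standard way such dichotomies are produced.

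The genuine gap is in your affine branch. You secure analyticity only on a neighbourhood $U\ni 0$, yet to pass from ``$P$ affine on $U$'' to ``$P$ affine on its entire domain'' you invoke analyticity of $P$ \emph{on its whole connected domain} --- a strictly stronger hypothesis that you never established and that does not follow from convexity: a convex function can be affine near $0$ and strictly convex away from $0$ (e.g.\ the square of the distance to an interval around the origin), so local affineness does not self-propagate. As written, your argument therefore proves only the local dichotomy ``affine near $0$ or strictly convex near $0$'', which is weaker than the corollary's ``affine in its domain or strictly convex in a neighbourhood of $0$''. The asymmetry in the corollary --- global affineness versus merely local strict convexity --- is precisely the sign that the affine alternative must be imported from the cited theorem in a global form, for instance as a $q$-independent degeneracy of the cocycle (your closing ``cohomologous to a constant'' remark), which would give $P(q)=\log L+cq$ for all $q$ at once. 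You gesture at this but do not establish it, and it cannot be established from the ingredients you allow yourself; it has to be part of what \cite[Theorem 9.1]{Tom_rush_2023} actually asserts (either analyticity on the whole domain, or a dichotomy already stated in global form). Pinning down that statement is exactly the step your proposal leaves open, and without it the affine half of the conclusion is unproved.
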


It was proven in \cite{barany_rams_2014} that the pressure function of the matrices corresponding to rational projections of 2-dimensional sponges is not a straight line in its whole domain when the reciprocal of the contraction ratio is not a divisor of the number of retained level 1 cylinders. Their proof could be generalized for the case of general matrices corresponding to CISSIFSs, this proof can be found in the Appendix of \cite{OurPaper}.

\begin{example}[$45^\circ$ projection of the Sierpiński carpet]\label{x88}
    Recall from Section \ref{a84} the expectation matrices corresponding to the $45^\circ$ projection of the Sierpiński carpet are the following.
It follows from \cite[Exercise 1.7]{viana_lectures} that the monoid corresponding to the matrices is pinching and twisting. Namely, $\mathbf{B}_0$, $\mathbf{B}_1$ and $\mathbf{B}_2$ are invertible, $2\times 2$ matrices with the following property: $\mathbf{B}_0$ and $\mathbf{B}_2$ are hyperbolic (i.e. they both have two different eigenvalues), hence the monoid is pinching, and they also don't share an eigensubspace, thus it is twisting as well. Hence the assumptions of Theorem \ref{x90} are satisfied. Consequently, in this case the interesting parameter interval exists.

\end{example}

\begin{example}[Example \ref{x20} continued]\label{x24}
    The two matrices are invertible.
    It can be  checked that the conditions mentioned in the previous example hold for the choice $\mathbf{A}_1=\mathbf{B}_0\cdot \mathbf{B}_1$ and $\mathbf{A}_2=\mathbf{B}_1^{2}$
\end{example}

\section{Existence of interior points}
In \cite{OurPaper} we gave a condition on the expectation matrices under which the random attractor contained an interior point almost surely conditioned on non-extinction. Namely, we required that all the expectation matrices has all column sums greater than 1. In some cases this turned out to be the sufficient condition (see for example \cite[Theorem 2.1]{OurPaper} about the Menger-sponge) but in general this is not sufficient as we show in case of Example \ref{x20}.

We will show that this is indeed the case here using the following proposition.

Let $\mathfrak{V}(N)= \{\mathbf{u}\in \mathbb{N}^N\}$, the set of non-negative integer vectors of length $N$.
\begin{proposition}\label{x98}
    Consider the CISSIFS with attractor $\Lambda_{\mathcal{S}, p}$ and expectation matrices $\mathcal{M}=\{\mathbf{M}_0=p\cdot \mathbf{B}_0, \dots, \mathbf{M}_{L-1}=p\cdot\mathbf{B}_{L-1}\}$. 
    Assume that there exists a non-empty set $\mathcal{U}:=\{\mathbf{u}_1, \dots, \mathbf{u}_m\}\subset\mathfrak{V}(N)\setminus\{\mathbf{0}\}$
    \begin{enumerate}
    \item there exists $\mathbf{u}^*\in \mathcal{U}$ and there exists $\pmb{\theta}^{*}\in [L]^{S^{*}}$ for some $S^{*}\geq 1$ and an $U^{*}\in [N]$ such that 
    \begin{enumerate}[label=(\alph*)]
        \item  \begin{equation}\label{x66}
            \mathbf{e}_{U^{*}}^T\mathbf{B}_{\pmb{\theta}^*}\geq \mathbf{u}^*.
        \end{equation}
        \item For the deterministic attractor $\Lambda_{\mathcal{S}}$, $\Lambda_{\mathcal{S}}\cap J^{(\pmb{\theta}^{*})}$ contains an interior point. (Note that this holds when the deterministic attractor is itself an interval.)
    \end{enumerate}
   
    \item
    Further, there exists a $\gamma'>1$ and a level $S'$ such that for all $\pmb{\theta}\in [L]^{S'}$ there exists a non-negative, $|\mathcal{U}|\times |\mathcal{U}|$ matrix $\mathbf{A}_{\pmb{\theta}}$ with all row sums greater than $\gamma'$ (i.e. for all $i\in[|\mathcal{U}|]$ $\sum_{k\in[|\mathcal{U}|]}\mathbf{A}_{\pmb{\theta}}(i,k)>\gamma'>1$). Assume that for this $\mathbf{A}_{\pmb{\theta}}$
    \begin{equation}
        \mathbf{U}\mathbf{M}_{\pmb{\theta}}\geq \mathbf{A}_{\pmb{\theta}}\mathbf{U},
    \end{equation}
    where $\mathbf{U}$ is the $|\mathcal{U}|\times N$ matrix having row vectors $\mathbf{u}_i^T$ for $i=1, \dots, m$.

    Then $\Lambda_p$ contains an interval almost surely conditioned on non extinction.
    \end{enumerate}
\end{proposition}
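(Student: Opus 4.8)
The plan is to turn the statement into a positive-probability event and then to realise that event through a supercritical branching process of \emph{redundant covers}. First I would reduce to showing that $\mathbb{P}(\Lambda_{\mathcal{S},p}\text{ contains an interval})>0$, because the same self-similarity and independence structure already used in the proof of Theorem \ref{z42} (the $0$--$1$-type lemma behind Lemma \ref{d77} and Lemma \ref{a78}, i.e. \cite[Lemma 3.9]{OurPaper}) upgrades this to an almost sure statement conditioned on non-extinction: each retained level-one cylinder spawns an independent rescaled copy of the whole process, so the event ``contains an interval'' propagates. Hypothesis (1b) pins down the geometric target: the interior point of $\Lambda_{\mathcal{S}}\cap J^{(U^{*})}_{\pmb{\theta}^{*}}$ lies in a genuine subinterval $T\subset \Lambda_{\mathcal{S}}$, and the goal becomes to force $T\subset \Lambda_{\mathcal{S},p}$ with positive probability.

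Next I would read the elements of $\mathcal{U}=\{\mathbf{u}_1,\dots,\mathbf{u}_m\}$ as covering configurations: a configuration $\mathbf{u}_k$ sitting over a sub-target records that, among the currently retained cylinders, at least $u_{k,V}$ of type $V$ redundantly cover that sub-target. Hypothesis (1a), $\mathbf{e}_{U^{*}}^{T}\mathbf{B}_{\pmb{\theta}^{*}}\ge \mathbf{u}^{*}$, supplies the seed: starting from a single retained type-$U^{*}$ block and reading the finite word $\pmb{\theta}^{*}$, the deterministic offspring dominate $\mathbf{u}^{*}$, so with positive probability the configuration $\mathbf{u}^{*}$ is realised over the slot $J^{(U^{*})}_{\pmb{\theta}^{*}}$ containing $T$. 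I would then iterate super-steps of length $S'$. The defining inequality $\mathbf{U}\mathbf{M}_{\pmb{\theta}}\ge \mathbf{A}_{\pmb{\theta}}\mathbf{U}$ reads, row by row, as $\mathbf{u}_i^{T}\mathbf{M}_{\pmb{\theta}}\ge \sum_{k}\mathbf{A}_{\pmb{\theta}}(i,k)\,\mathbf{u}_k^{T}$, i.e. the \emph{expected} retained offspring of a configuration $\mathbf{u}_i$ dominate enough cylinders to assemble $\mathbf{A}_{\pmb{\theta}}(i,k)$ disjoint copies of each $\mathbf{u}_k$ over disjoint sub-targets. Since every row sum of $\mathbf{A}_{\pmb{\theta}}$ exceeds $\gamma'>1$ uniformly in $\pmb{\theta}\in[L]^{S'}$, the mean matrices of the resulting configuration-valued multitype branching process in the environment $\nu$ are bounded below by matrices whose growth rate is at least $\gamma'$, so its Lyapunov exponent is at least $\log\gamma'>0$; by the standard survival criterion for supercritical multitype branching processes in a random environment (in the spirit of Theorem \ref{z68}) such a process survives with positive probability.

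The final step is to translate survival back into geometry. By construction, a surviving configuration at each super-level keeps a nested sub-target redundantly covered by retained cylinders, all of these sub-targets lying inside the fixed interval $T$ from (1b); the intersection of this nested family is a non-degenerate interval every point of which has a retained code, hence lies in $\Lambda_{\mathcal{S},p}$. Combined with the positive probability of planting the seed $\mathbf{u}^{*}$, this yields $\mathbb{P}(\Lambda_{\mathcal{S},p}\supset\text{an interval})>0$, and the reduction of the first paragraph promotes it to almost sure conditioned on non-extinction.

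The step I expect to be the main obstacle is the coupling in the second paragraph, since $\mathbf{U}\mathbf{M}_{\pmb{\theta}}\ge \mathbf{A}_{\pmb{\theta}}\mathbf{U}$ governs only the \emph{mean} offspring, while survival requires the \emph{actual} (Binomial, hence possibly deficient) offspring to contain the prescribed disjoint sub-configurations often enough that the realised mean matrix still dominates $\mathbf{A}_{\pmb{\theta}}$. A naive coupling that demands the entire bundle of a configuration survive can be genuinely subcritical (the minimum over the coordinates needed to complete a bundle may have mean below $1$ even when each coordinate mean exceeds $\gamma'$), so the disjoint allocation of retained cylinders to child configurations must be chosen carefully, with the loss on shortfall events controlled and the embedded process checked to be non-degenerate so that a positive Lyapunov exponent really produces positive survival probability. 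This is the technical heart, and it is precisely where the freedom to enlarge $\mathcal{U}$ is exploited and where the argument generalises the column-sum criterion of \cite{OurPaper}.
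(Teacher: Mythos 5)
Your overall skeleton---reducing to positive probability via the $0$--$1$ lemma, planting a seed configuration using \eqref{x66}, reading the elements of $\mathcal{U}$ as redundant-coverage configurations, and iterating super-steps---matches the paper's proof. The fatal gap is in your second paragraph: you invoke the survival criterion for supercritical MBPREs (positive Lyapunov exponent implies survival with positive probability, in the spirit of Theorem \ref{z68}) for your configuration-valued process, but survival is the wrong event. Survival only says the total population stays positive; it allows the sub-population over any particular $L$-adic cell to die out, and a single empty cell inside the target at any level destroys the interval. What is actually needed is that \emph{every} cell $J^{(U^{*})}_{\pmb{\theta}^{*}\pmb{\theta}}$, for every $k$ and every one of the $L^{S'k}$ words $\pmb{\theta}$ of that level, simultaneously carries retained cylinders---an intersection of exponentially many events per level, which no survival theorem provides. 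The paper's proof obtains this quantitatively: its events $B_k$ demand domination by $\eta^{k}\mathbf{u}$ with a geometrically \emph{growing} multiplicity ($1<\eta<\gamma$), so that conditioned on $B_{k-1}$ the count over each cell dominates a sum of about $\eta^{k-1}$ i.i.d.\ offspring variables; the large deviation bound (Fact \ref{x78}) then makes the per-cell failure probability at most $\delta^{\eta^{k-1}}$, doubly exponentially small, which beats the union bound over $L^{Sk}$ cells and $N^2$ type pairs and yields $\prod_k\bigl(1-N^2L^{Sk}\delta^{\eta^{k-1}}\bigr)>0$ (Lemma \ref{x97}). Your configurations, by contrast, carry bounded multiplicities (elements of the fixed finite set $\mathcal{U}$), so the per-cell failure probability at each super-step is bounded below by a constant; since the cells' offspring are independent, the probability that all $L^{Sk}$ cells succeed at step $k$ is then at most $(1-c)^{L^{Sk}}\to 0$, so the full-coverage event has probability zero. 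This cannot be repaired by a cleverer coupling---the difficulty you flagged---because the missing ingredient is the growing redundancy $\eta^k$, not the mean-versus-actual offspring issue.

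There is also an error in your final geometric step: the intersection of a nested, shrinking family of sub-targets is a single point, not ``a non-degenerate interval,'' so even granting survival of your process you would only produce a point of $\Lambda_{\mathcal{S},p}$, which non-extinction already provides. The correct conclusion, as in the paper, runs: on the event that at least one retained cylinder sits over every cell of every level inside $J^{(U^{*})}_{\pmb{\theta}^{*}}$, one has $\Lambda_{\mathcal{S},p}\cap J^{(U^{*})}_{\pmb{\theta}^{*}}=\Lambda_{\mathcal{S}}\cap J^{(U^{*})}_{\pmb{\theta}^{*}}$, and this set contains an interval by hypothesis (1b); the interval comes from the deterministic attractor restricted to the fully covered target, not from a limit of nested cells along a surviving lineage.
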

In what follows we give Condition $2^*$, an alternative to Condition $2$ which are shown to be equivalent later in Lemma \ref{x31}. Then we state a Corollary of the Proposition, about some cases when the existence of interior point in the projection depends on the lower spectral radius corresponding to the expectation matrices. The corollary states that this surely happens, when we can get the lower spectral radius by instead of using a submultiplicative matrix norm in the definition we use the supermultiplicative minimum columns sum. Then in Application \ref{x22} we use our Proposition on Example \ref{x20}, and finally (before the proofs) in Remark \ref{x12} we explain the intuitive meanings of both Conditions $2$ and $2^*$.  

\begin{definition}[Condition $2^*$]
    There exists a $\gamma>1$ and a level $S$ such that for all $\mathbf{u}\in \mathcal{U}$ and for all $\pmb{\theta}\in [L]^S$ there exists a $\mathbf{v}\in\mathcal{U}$ such that 
    \begin{equation}
        \mathbf{u}^T\mathbf{M}_{\pmb{\theta}}\geq \gamma \mathbf{v}.
    \end{equation}
\end{definition}
\begin{corollary}[of Proposition \ref{x98}]\label{x15}
    If for the expectation matrices $\mathcal{M}$ of the CISSIFS it holds that 
    \begin{equation}\label{x14}
        1<\widecheck{\rho}(\mathcal{M})= \lim_{n\to \infty} \widecheck{\rho}_{n}(\mathcal{M}, (\cdot)_*), 
    \end{equation}
    where $(\mathbf{M})_*$ is the minimal column sum ($(\mathbf{M})_*=\min_{j}\sum_{i}\mathbf{M}(i,j)$) 
    of the matrix $\mathbf{M}$ then Condition $2$ of Proposition \ref{x98} is satisfied for $\mathcal{U}=\{(1, \dots, 1)\}$.
\end{corollary}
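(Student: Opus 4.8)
The plan is to unwind Condition $2$ of Proposition \ref{x98} in the degenerate case $\mathcal{U}=\{\mathbf{1}\}$ and recognise that it amounts to a uniform lower bound on the minimal column sums of all sufficiently long products $\mathbf{M}_{\pmb{\theta}}$, which is exactly what hypothesis \eqref{x14} supplies.

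First I would record what Condition $2$ says when $|\mathcal{U}|=1$. Then $\mathbf{U}$ is the single row vector $\mathbf{1}^T$ of length $N$, and $\mathbf{A}_{\pmb{\theta}}$ is a non-negative scalar $a_{\pmb{\theta}}$ whose (only) row sum is $a_{\pmb{\theta}}$ itself. The inequality $\mathbf{U}\mathbf{M}_{\pmb{\theta}}\geq \mathbf{A}_{\pmb{\theta}}\mathbf{U}$ then reads $\mathbf{1}^T\mathbf{M}_{\pmb{\theta}}\geq a_{\pmb{\theta}}\mathbf{1}^T$, that is, every column sum of $\mathbf{M}_{\pmb{\theta}}$ is at least $a_{\pmb{\theta}}$. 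The largest scalar with this property is precisely the minimal column sum $(\mathbf{M}_{\pmb{\theta}})_*$, so Condition $2$ for $\mathcal{U}=\{\mathbf{1}\}$ holds if and only if there are a level $S'$ and a constant $\gamma'>1$ such that $(\mathbf{M}_{\pmb{\theta}})_*>\gamma'$ for every $\pmb{\theta}\in[L]^{S'}$; one then just sets $\mathbf{A}_{\pmb{\theta}}:=(\mathbf{M}_{\pmb{\theta}})_*$.

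Second I would produce such an $S'$. Write $a_n:=\min_{\pmb{\theta}\in[L]^n}(\mathbf{M}_{\pmb{\theta}})_*$, the minimum being over the finite set $[L]^n$ and hence attained. The key elementary fact is that on non-negative matrices the minimal column sum is supermultiplicative: for non-negative $\mathbf{A},\mathbf{B}$ the $j$-th column sum of $\mathbf{A}\mathbf{B}$ equals $\sum_k\big(\sum_i\mathbf{A}(i,k)\big)\mathbf{B}(k,j)\geq (\mathbf{A})_*\sum_k\mathbf{B}(k,j)\geq(\mathbf{A})_*(\mathbf{B})_*$, whence $(\mathbf{A}\mathbf{B})_*\geq(\mathbf{A})_*(\mathbf{B})_*$. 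Splitting a word of length $m+n$ into its first $m$ and last $n$ letters gives $a_{m+n}\geq a_m a_n$, so by Fekete's lemma applied to $\log a_n$ the limit $\lim_n a_n^{1/n}$ exists and equals $\sup_n a_n^{1/n}=\widecheck{\rho}(\mathcal{M})$, in agreement with \eqref{x14}. Since this value exceeds $1$, there is some $S'$ with $a_{S'}^{1/S'}>1$, i.e.\ $a_{S'}>1$; choosing any $\gamma'\in(1,a_{S'})$ we obtain $(\mathbf{M}_{\pmb{\theta}})_*\geq a_{S'}>\gamma'$ for every $\pmb{\theta}\in[L]^{S'}$, which is the reduced form of Condition $2$ from the first step.

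The computation is short, and the one place needing care is the mismatch of conventions: Definition \ref{q91} is stated for a submultiplicative matrix norm, whereas the minimal column sum used here is supermultiplicative. Consequently the existence of the limit in \eqref{x14}, and its identification as a supremum (which is what lets us extract a single level $S'$ on which all minimal column sums are bounded below by $\gamma'>1$), rest on the Fekete argument above rather than on the result cited for Definition \ref{q91}.
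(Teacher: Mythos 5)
Your proposal is correct and takes essentially the same route as the paper, whose (one-line) proof likewise reads off from \eqref{x14} a level $n$ and an $\alpha>1$ with $\mathbf{e}^T\mathbf{M}_{\pmb{\theta}}\geq \alpha\,\mathbf{e}^T$ for all $\pmb{\theta}\in[L]^{n}$, i.e.\ Condition $2$ with the scalar matrix $\mathbf{A}_{\pmb{\theta}}=\alpha$. Your explicit unwinding of Condition $2$ in the scalar case and the Fekete/supermultiplicativity justification for the limit in \eqref{x14} (which the paper leaves implicit, the cited Definition \ref{q91} being stated only for submultiplicative norms) are welcome extra detail, not a different argument.
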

\begin{proof}[Proof of Corollary \ref{x15}]
    If \eqref{x14} holds, it means, that there exists an $n$ such that for all $\pmb{\theta}\in[L]^{n}$ we have 
    \begin{equation}
        \mathbf{e}^T\mathbf{M}_{\pmb{\theta}}\geq \alpha \mathbf{e}^{T},
    \end{equation}
    for some $\alpha>1$. 
\end{proof}

\begin{lemma}\label{x31}
    Condition 2 and Condition $2^*$ are equivalent (the parameters $\gamma$ and $S$ might differ).
\end{lemma}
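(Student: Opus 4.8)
The plan is to prove the two implications separately; the direction from Condition $2^*$ to Condition $2$ is a direct repackaging, whereas the reverse requires an iteration-and-concentration argument that I expect to be the crux. Throughout, recall that $\mathbf{U}$ is the $|\mathcal{U}|\times N$ matrix whose $i$-th row is $\mathbf{u}_i^T$, and that every $\mathbf{u}_k$, every $\mathbf{M}_{\pmb{\theta}}$ and every candidate $\mathbf{A}_{\pmb{\theta}}$ is entrywise nonnegative, so that right- and left-multiplication by such matrices preserve entrywise $\geq$ inequalities.

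For Condition $2^*\Rightarrow$ Condition $2$, take the parameters $\gamma>1$ and $S$ supplied by Condition $2^*$. For each $i$ and each $\pmb{\theta}\in[L]^S$ pick $k=k(i,\pmb{\theta})$ with $\mathbf{u}_i^T\mathbf{M}_{\pmb{\theta}}\geq\gamma\,\mathbf{u}_{k}^T$, and for fixed $\pmb{\theta}$ define $\mathbf{A}_{\pmb{\theta}}$ to be the $|\mathcal{U}|\times|\mathcal{U}|$ matrix with entry $\gamma$ in column $k(i,\pmb{\theta})$ of each row $i$ and zeros elsewhere. Every row sum of $\mathbf{A}_{\pmb{\theta}}$ then equals $\gamma$, the $i$-th row of $\mathbf{A}_{\pmb{\theta}}\mathbf{U}$ is exactly $\gamma\,\mathbf{u}_{k(i,\pmb{\theta})}^T$, and this is dominated by the $i$-th row $\mathbf{u}_i^T\mathbf{M}_{\pmb{\theta}}$ of $\mathbf{U}\mathbf{M}_{\pmb{\theta}}$; hence $\mathbf{U}\mathbf{M}_{\pmb{\theta}}\geq\mathbf{A}_{\pmb{\theta}}\mathbf{U}$ and Condition $2$ holds with $S'=S$ and any fixed $\gamma'\in(1,\gamma)$, for which the row sums $\gamma$ are strictly larger than $\gamma'$ as required.

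For the reverse implication I would first chain Condition $2$. Writing a word of length $nS'$ as a concatenation $\pmb{\theta}^{(1)}\cdots\pmb{\theta}^{(n)}$ of blocks of length $S'$ and repeatedly multiplying the block inequalities $\mathbf{U}\mathbf{M}_{\pmb{\theta}^{(l)}}\geq\mathbf{A}_{\pmb{\theta}^{(l)}}\mathbf{U}$ (using nonnegativity to preserve the order at each step) yields
\begin{equation}
\mathbf{U}\mathbf{M}_{\pmb{\theta}^{(1)}\cdots\pmb{\theta}^{(n)}}\geq \mathbf{A}_{\pmb{\theta}^{(1)}}\cdots\mathbf{A}_{\pmb{\theta}^{(n)}}\mathbf{U}=:\mathbf{C}\,\mathbf{U}.
\end{equation}
A one-line induction shows that a product of nonnegative matrices each having all row sums $>\gamma'$ again has all row sums $>(\gamma')^{n}$, so $\mathbf{C}$ has all row sums exceeding $(\gamma')^{n}$. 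Reading off row $i$, since $\mathbf{C}$ has only $|\mathcal{U}|$ columns, the pigeonhole principle gives an index $k^*$ (depending on $i$ and the word) with $\mathbf{C}(i,k^*)>(\gamma')^{n}/|\mathcal{U}|$. Because each $\mathbf{u}_k\geq\mathbf{0}$, the combination $\sum_k\mathbf{C}(i,k)\mathbf{u}_k^T$ dominates its $k^*$-th summand entrywise, whence
\begin{equation}
\mathbf{u}_i^T\mathbf{M}_{\pmb{\theta}^{(1)}\cdots\pmb{\theta}^{(n)}}\geq \mathbf{C}(i,k^*)\,\mathbf{u}_{k^*}^T\geq \frac{(\gamma')^{n}}{|\mathcal{U}|}\,\mathbf{u}_{k^*}^T.
\end{equation}
Choosing $n$ so large that $(\gamma')^{n}>|\mathcal{U}|$, setting $\gamma:=(\gamma')^{n}/|\mathcal{U}|>1$ and $S:=nS'$, and noting that every $\pmb{\theta}\in[L]^{S}$ decomposes uniquely into $n$ blocks of length $S'$, the last display is precisely Condition $2^*$ with witness $\mathbf{v}=\mathbf{u}_{k^*}\in\mathcal{U}$.

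The main obstacle is exactly this concentration step: a single application of Condition $2$ bounds $\mathbf{u}_i^T\mathbf{M}_{\pmb{\theta}}$ below by a nonnegative combination $\sum_k\mathbf{A}_{\pmb{\theta}}(i,k)\mathbf{u}_k^T$ whose coefficients sum to more than $\gamma'$, but this mass may be spread over up to $|\mathcal{U}|$ vectors, each receiving as little as $\gamma'/|\mathcal{U}|<1$, so no single $\mathbf{u}_k$ need appear with coefficient exceeding $1$. Iterating $n$ times amplifies the total coefficient mass to $(\gamma')^{n}$ while the number of vectors stays fixed at $|\mathcal{U}|$, so the pigeonhole bound $(\gamma')^{n}/|\mathcal{U}|$ eventually surpasses $1$. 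This is the structural reason why the level $S$ in Condition $2^*$ cannot in general be taken equal to $S'$, but only as a suitable multiple $nS'$.
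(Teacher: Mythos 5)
Your proof is correct and follows essentially the same route as the paper's: the easy direction via a matrix $\mathbf{A}_{\pmb{\theta}}$ with exactly one positive entry per row, and the hard direction by chaining the block inequalities to get $\mathbf{U}\mathbf{M}_{\pmb{\theta}}\geq \mathbf{A}_{\pmb{\theta}^{(1)}}\cdots\mathbf{A}_{\pmb{\theta}^{(n)}}\mathbf{U}$, invoking supermultiplicativity of the minimal row sum (Fact \ref{x21}) and then pigeonhole to extract a single entry exceeding $(\gamma')^{n}/|\mathcal{U}|>1$. If anything, your version is stated more carefully than the paper's (which conflates the target $\gamma$ with the given one in the choice of the iteration length), but the underlying argument is identical.
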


\begin{application}[Example \ref{x20}]\label{x22}
    In both examples we estimated the probability using Wolfram Mathematica. 
    In case of Example \ref{x20} 
    $$
    \mathfrak{U}=\{(1, 0, 1,0), (0, 1, 0, 1)\},
    $$
in this case our estimation for the critical probability is $\widehat{p}\leq 377^{-1/13}\sim 0.633607$.
\end{application}

\begin{remark}[The meaning of the assumptions in Proposition \ref{x98}]\label{x12}
    The set of vectors $\mathcal{U}$ contains some possible arrangement of types, e.g. $(1, 0, 1)$ would mean we have one of the type 0 and one of the type 2 individuals and zero of the type-1 individuals. The first condition guarantees that with positive probability we can reach one of the possible arrangements from $\mathcal{U}$. 
    Before explaining the meaning of Condition 2 and $2^*$ we first explain the key idea behind these types of proofs which appears in multiple papers (e.g. \cite{zbMATH00064453}, \cite{zbMATH05255663}, \cite{zbMATH05873843},\cite{Simon_Rams_2014} and \cite{OurPaper}). First think of the proof of Falconer and Grimmett (\cite{zbMATH00064453}). In this paper they consider the projection to the coordinate axis, hence they have exactly one type. In this case there exists interval in the random attractor (with probability one conditioned on non-extinction) iff in all environments the expected number of individuals are greater than 1. In \cite{zbMATH05255663} and \cite{OurPaper} we already have multiple types, in these cases what we ask for is that in every environment if we start the process with one individual from each type, then from each type the expected number of individuals will be greater than one. This is phrased as: all column sums of all expectation matrices are greater than 1. This requirement is a special case of condition $2^*$ (hence also of Condition 2), namely in the above case condition $2^*$ is satisfied with $\mathcal{U}=\{(1, \dots, 1)\}$. Intuitively this means that if we glue together all the types and consider that as a "meta"-type, then the expected growth of the number of individuals of this "meta"-type is greater than one.
    This is relaxed in a way that we ask for similar growth in every environment (in expectation) however we ask for a set of vectors such that for each vector from the set the growth is attained. Which can be phrased as, we have multiple "meta"-types (arrangement of actual types), and we require that in each environment each "meta"-type expected behaviour is that it gives birth to more than one individual of one given "meta"-type (Condition $2^*$).
    
    Now, condition 2 is a very natural generalization of this in a way, that in expectation it gives birth to more than one individual of all the "meta"-types together.
\end{remark}
In what follows we present the proofs of the statements.
\begin{proof}[Proof of Lemma \ref{x31}]
    It is easy to see that Condition $2^*$ implies Condition 2, in this case the matrix $\mathbb{A}_{\theta}$ has exactly one positive element in each row.
    Condition $2^*$ can be rephrased in the following way:
    Assume that there exists a level $S$ such that for all $\pmb{\theta}\in [L]^{S'}$ there exists a non-negative $|\mathcal{U}|\times |\mathcal{U}|$ matrix $\mathbf{A}_{\pmb{\theta}}$ with the property that every row of 
    $\mathbf{A}_{\pmb{\theta}}$ contains exactly one element which element is greater than $\gamma$.
    To prove that this 2 implies $2^*$ choose $S$ so that $\gamma'^S/|\mathcal{U}|>\gamma$. Then since the smallest column sum of a product of matrices is greater than the product of the smallest column sums (see Fact \ref{x21}) we get that for any $\pmb{\theta}=\pmb{\theta}_1\dots \pmb{\theta}_{S}\in [L]^{S'\cdot S}$:
    \begin{equation}
        \mathbf{U}\mathbf{M}_{\pmb{\theta}}\geq \mathbf{A}_{\pmb{\theta}_1}\mathbf{U}\mathbf{M}_{\pmb{\theta}_2}\dots\mathbf{M}_{\pmb{\theta}_S}\geq  \mathbf{A}_{\pmb{\theta}_1}\dots\mathbf{A}_{\pmb{\theta}_S}\mathbf{U}.
    \end{equation}
    Since all row sums of $\mathbf{A}_{\pmb{\theta}_i}$ are greater than $\gamma'$ it follows that the product has all row sums greater than $\gamma'^{S}$ from which it follows that each row has at least one element which is greater than $\gamma'^S/|\mathcal{U}|>\gamma$,  by the choice of $S'$, which proves that the required condition follows.
\end{proof}
Now we state (without proof) a very simple fact. 
\begin{fact}\label{x21}
    \begin{enumerate}
        \item $\mathbf{A}_1\geq\mathbf{A}_2$ implies that for any non-negative matrix $\mathbf{B}$, $\mathbf{A}_1\cdot \mathbf{B}\geq\mathbf{A}_2\cdot \mathbf{B}$.
        \item The minimal row-sum is super-multiplicative for non-negative matrices.
    \end{enumerate}
\end{fact}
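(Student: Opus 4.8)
The plan is to verify both assertions directly from the definition of matrix multiplication, since each reduces to a one-line entrywise computation, the only genuine content being the role of non-negativity.

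For the first part I would fix indices $i,k$ and compare the $(i,k)$ entries of $\mathbf{A}_1\mathbf{B}$ and $\mathbf{A}_2\mathbf{B}$. Writing $(\mathbf{A}_1\mathbf{B})(i,k)=\sum_j \mathbf{A}_1(i,j)\mathbf{B}(j,k)$ and similarly for $\mathbf{A}_2$, the hypothesis $\mathbf{A}_1\geq\mathbf{A}_2$ gives $\mathbf{A}_1(i,j)\geq\mathbf{A}_2(i,j)$ for every $j$, while the non-negativity of $\mathbf{B}$ gives $\mathbf{B}(j,k)\geq 0$. Multiplying these two inequalities term by term and summing over $j$ yields $(\mathbf{A}_1\mathbf{B})(i,k)\geq(\mathbf{A}_2\mathbf{B})(i,k)$; since $i,k$ were arbitrary this is exactly $\mathbf{A}_1\mathbf{B}\geq\mathbf{A}_2\mathbf{B}$. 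I would flag explicitly that $\mathbf{B}\geq 0$ is needed here, as the term-by-term inequality can fail otherwise.

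For the second part I would introduce the notation $r(\mathbf{A}):=\min_i\sum_j\mathbf{A}(i,j)$ for the minimal row-sum of a non-negative matrix and aim to prove $r(\mathbf{A}\mathbf{B})\geq r(\mathbf{A})\,r(\mathbf{B})$. The key step is to rewrite the $i$-th row-sum of $\mathbf{A}\mathbf{B}$ by interchanging the order of summation, $\sum_k(\mathbf{A}\mathbf{B})(i,k)=\sum_j\mathbf{A}(i,j)\big(\sum_k\mathbf{B}(j,k)\big)$, which displays it as a non-negative combination of the row-sums of $\mathbf{B}$. Bounding each inner sum $\sum_k\mathbf{B}(j,k)$ below by $r(\mathbf{B})$ and then bounding $\sum_j\mathbf{A}(i,j)$ below by $r(\mathbf{A})$---both using non-negativity of the entries---gives $\sum_k(\mathbf{A}\mathbf{B})(i,k)\geq r(\mathbf{A})\,r(\mathbf{B})$ for every $i$, and taking the minimum over $i$ finishes it; the bound for products of more than two matrices then follows by induction.

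There is no substantive obstacle: both facts are immediate consequences of non-negativity together with the bilinearity of matrix multiplication. The only points worth stressing are that non-negativity of the relevant matrix is essential in each part, and that I would state part two with the explicit constant $r(\mathbf{A})\,r(\mathbf{B})$ so that it applies verbatim to the iterated bound $r(\mathbf{A}_{\pmb{\theta}_1}\cdots\mathbf{A}_{\pmb{\theta}_S})\geq\gamma'^{\,S}$ invoked in the proof of Lemma \ref{x31}.
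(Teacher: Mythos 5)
Your proof is correct, and since the paper states this fact explicitly without proof (``Now we state (without proof) a very simple fact''), your entrywise computation for part one and the interchange-of-summation argument for part two are precisely the standard arguments the authors intend the reader to supply. Your explicit quantitative form $r(\mathbf{A}\mathbf{B})\geq r(\mathbf{A})\,r(\mathbf{B})$ is also exactly what is needed for the iterated bound in the proof of Lemma \ref{x31}, so nothing is missing.
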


\begin{fact}[Large deviation theorem]\label{x78}
    Assume that the random variables $\mathbf{S}_1, \dots, \mathbf{S}_{\ell}$ are independent and identically distributed and has the same distribution as $\mathbf{S}$, with $\mathbb{E}(\mathbf{S})=\gamma>\eta$. There exists a $0<\delta<1$ such that 
    $$
    \mathbb{P}(\mathbf{S}_1+ \dots+ \mathbf{S}_{\ell}\leq \eta\cdot \ell)\leq \delta^{\ell}
    $$
\end{fact}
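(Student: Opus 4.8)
The plan is to apply the standard Chernoff--Cram\'er exponential bound to the lower tail. Since in every application of the fact the random variable $\mathbf{S}$ is a non-negative (integer-valued) count, I may assume $\mathbf{S}\geq 0$ is real-valued, so that the moment generating function $M(t):=\mathbb{E}(\exp(t\mathbf{S}))$ is finite for every $t\leq 0$ (indeed $\exp(t\mathbf{S})\leq 1$ there). Write $\psi(t):=\log M(t)$ for the cumulant generating function and set $g(t):=\psi(t)-\eta t$.

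First I would record the one-sided Markov estimate. Fix any $t<0$. Because multiplying by $t$ reverses inequalities, the event $\{\mathbf{S}_1+\dots+\mathbf{S}_\ell\leq \eta\ell\}$ coincides with $\{\exp(t(\mathbf{S}_1+\dots+\mathbf{S}_\ell))\geq \exp(t\eta\ell)\}$. Applying Markov's inequality and using that the $\mathbf{S}_i$ are independent and identically distributed,
\begin{equation*}
\mathbb{P}(\mathbf{S}_1+\dots+\mathbf{S}_\ell\leq \eta\ell)\leq \exp(-t\eta\ell)\,\mathbb{E}(\exp(t\mathbf{S}))^\ell=\exp(\ell\,g(t)).
\end{equation*}
Hence it suffices to produce a single $t_0<0$ with $g(t_0)<0$, and then set $\delta:=\exp(g(t_0))$; then $\delta\in(0,1)$, with positivity automatic and $\delta<1$ equivalent to $g(t_0)<0$.

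Next I would exhibit such a $t_0$ by examining $g$ near the origin. We have $g(0)=\psi(0)=\log\mathbb{E}(1)=0$. On $(-\infty,0)$ the function $M$ is finite and smooth, and differentiation under the expectation is justified by the domination $|\mathbf{S}\exp(t\mathbf{S})|\leq \mathbf{S}$ (valid for $t\leq 0$, $\mathbf{S}\geq 0$), which is integrable since $\mathbb{E}(\mathbf{S})=\gamma<\infty$. This yields $\psi'(t)=\mathbb{E}(\mathbf{S}\exp(t\mathbf{S}))/M(t)$, and as $t\to 0^-$ dominated convergence gives $M(t)\to 1$ and $\mathbb{E}(\mathbf{S}\exp(t\mathbf{S}))\to \mathbb{E}(\mathbf{S})=\gamma$, so $\psi'(t)\to\gamma$. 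Consequently $g'(t)=\psi'(t)-\eta\to\gamma-\eta>0$, and there is $t_1<0$ with $g'>0$ throughout $(t_1,0)$. Since $g$ is continuous up to $0$ with $g(0)=0$, it is strictly increasing on $(t_1,0]$, whence $g(t_0)<0$ for any $t_0\in(t_1,0)$, as required.

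The only genuine subtlety---the \emph{main obstacle}---is the integrability and differentiability of the moment generating function at the left endpoint $t=0$: one must be sure $M$ is finite just to the left of $0$ and that $\psi'(t)\to\gamma$ there. For non-negative $\mathbf{S}$ both hold cleanly, and this is the only case used in the paper. If one wished to state the fact for a general real-valued $\mathbf{S}$, one would additionally have to assume a finite exponential moment $\mathbb{E}(\exp(t\mathbf{S}))<\infty$ for some $t<0$; granting that, the identical argument goes through verbatim.
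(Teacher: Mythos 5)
Your proof is correct, and there is nothing in the paper to compare it against: Fact \ref{x78} is stated without proof and simply invoked in the proof of Proposition \ref{x98}, so your Chernoff--Cram\'er argument supplies exactly the standard justification the authors implicitly rely on. The core of your argument is sound: the exponential Markov bound $\mathbb{P}(\mathbf{S}_1+\dots+\mathbf{S}_\ell\leq\eta\ell)\leq\e{\ell g(t)}$ for $t<0$ with $g(t)=\log\mathbb{E}(\e{t\mathbf{S}})-\eta t$, together with $g(0)=0$ and $g'(t)\to\gamma-\eta>0$ as $t\to 0^-$ (the domination $|\mathbf{S}\e{t\mathbf{S}}|\leq\mathbf{S}$ for $t\leq 0$, $\mathbf{S}\geq 0$ does legitimize differentiating under the expectation, given that $\gamma=\mathbb{E}(\mathbf{S})<\infty$, which is implicit in the statement and true in every application), yields $t_0<0$ with $g(t_0)<0$ and hence $\delta=\e{g(t_0)}\in(0,1)$ uniformly in $\ell$. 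Your closing caveat is moreover a genuine correction to the fact as printed rather than pedantry: for a real-valued $\mathbf{S}$ with finite mean but no finite exponential moment at any $t<0$ (e.g.\ $\mathbf{S}=\gamma-X$ with $X\geq 0$ heavy-tailed), the lower tail of $\mathbf{S}_1+\dots+\mathbf{S}_\ell$ decays only polynomially and no $\delta<1$ can exist, so some hypothesis such as $\mathbf{S}\geq 0$ or $\mathbb{E}(\e{t\mathbf{S}})<\infty$ for some $t<0$ is necessary; since the fact is only ever applied in the paper to the non-negative offspring counts $Y^{(V)}_{j,\overline{\pmb{\theta}}|_{h(k)},S}(W)$, your added assumption is harmless and satisfied in every use.
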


In the proof of the Proposition \ref{x98} we use the notation of Section \ref{x42}. By a very similar argument as in the beginning of Section \ref{x42} analogously to \eqref{a80} it can be shown that 
\begin{lemma}\label{x41}
For all fix $U\in[N]$ and $\overline{\pmb{\theta}}\in\Sigma$ we have for all $W\in[N]$
\begin{equation}
    Z_{n+k}^{(U)}(\overline{\pmb{\theta}})(W)=\sum_{V\in[N]}\sum_{\mathbf{i}\in \mathcal{X}_{U,\pmb{\theta},n}^{V} }
    Y^{(V)}_{\mathbf{i},\overline{\pmb{\theta}}|_{n+k}, k}(W),
\end{equation}
where
\begin{itemize}
    \item $Y^{(V)}_{\mathbf{i},\overline{\pmb{\theta}}|_{n+k}, k}(W)$ are jointly independent random variables for $V\in [N],\mathbf{i}\in \mathcal{X}_{U,\pmb{\theta},n}^{V}, W\in[N]$,
    \item they are also independent of $\mathcal{X}_{U,\pmb{\theta},n}$ and
    \item for $V\in [N],\mathbf{i}\in \mathcal{X}_{U,\pmb{\theta},n}^{V}, W\in[N]$ 
     $$
     Y^{(V)}_{\mathbf{i},\overline{\pmb{\theta}}|_{n+k}, k}(W)\stackrel{d}{=}Z^{(V)}_k(\sigma^{n}(\overline{\pmb{\theta}}))(W).
     $$
\end{itemize}
Conditioned on $\mathcal{X}^{V}_{U, \overline{\pmb{\theta}}, n}$ we usually write
$$
Z_{n+k}^{(U)}(\overline{\pmb{\theta}})(W)=\sum_{V\in[N]}\sum_{j=1}^{Z_{n}^{(U)}(\overline{\pmb{\theta}})(V)}
    Y^{(V)}_{j,\overline{\pmb{\theta}}|_{n+k}, k}(W).
$$
\end{lemma}
\begin{proof}
    We will prove that $\Lambda_p$ contains an interval almost surely conditioned on non extinction, by proving that it contains an interval with positive probability and similar to the proof of Theorem \ref{z42} we use \cite[Lemma 3.9]{OurPaper} to prove that this implies that the event happens a.s. conditioned on non-extinction.
    To prove that our set contains interval with positive probability we will prove that $\Lambda_p$ contains the interval $J^{(U^*)}_{\pmb{\theta}^{*}}$ with positive probability, recall that $U^{*}$ and $\pmb{\theta}^{*}$ appeared in the first condition of the theorem. This we do by showing that the number of retained cylinders in this interval grows exponentially using the large deviation theorem (see Fact \ref{x78}) and the characterization as sums of independent random variables (see Lemma \ref{x41}). 

    Since the random variables $\mathbf{Z}_{n}^{(.)}(\overline{\pmb{\theta}})$ only depends on the first $n$ letter the word $\overline{\pmb{\theta}}$, $\overline{\pmb{\theta}}|_n$ hence we sometimes write $\mathbf{Z}_{n}^{(.)}(\overline{\pmb{\theta}}|_n)$ for the random variable.
    We define the events
    \begin{align*}
        A_0&:=\{\exists V\in [N]:\;Z^{(U^{*})}_{S^*}(\pmb{\theta}^{*})(V)>0\},\\
        A_n(\pmb{\theta})&:=\{\exists V\in [N]:\; Z^{(U^{*})}_{S^*+n}(\pmb{\theta}^{*}\pmb{\theta})(V)>0\}\text{ and }
        A_n:=\bigcap_{\pmb{\theta}\in[L]^n}A_n(\pmb{\theta}).
    \end{align*}
    If $\bigcap_{n}A_n$ happens with positive probability then we are done, since it means that with positive probability we retain everything in $J_{\pmb{\theta}^{*}}^{(U^{*})}$, hence $\Lambda_{\mathcal{S}, p}\bigcap J^{(U^{*})}_{\pmb{\theta}^{*}}=\Lambda_{\mathcal{S}}\bigcap J^{(U^{*})_{\pmb{\theta}^{*}}}$ which by the second part of the first assumption contains an interval. Instead of inspecting $A_n$ however we inspect another event. Fix $1<\eta<\gamma$ and for $\pmb{\theta}\in[L]^{n\cdot S}$ $\pmb{\theta}=\pmb{\theta}_1, \dots, \pmb{\theta}_n$ ($\pmb{\theta}_i\in [L]^{S}$) let 
    \begin{align*}
    B_n(\pmb{\theta})&:=\{\exists \mathbf{u}\in \mathcal{U}:\;
    \mathbf{Z}^{(U^{*})}_{S^*+k\cdot S}(\pmb{\theta}^*\pmb{\theta}_1\dots \pmb{\theta}_k)\geq \eta^k\mathbf{u}\text{ for }k\leq n\}\text{ and }\\
    B_n&:= \bigcap_{\pmb{\theta}\in [L]^{n\cdot S}}B_n(\pmb{\theta}).
    \end{align*}
    Since $\mathcal{U}$ does not contain the vector $\mathbf{0}$ it follows that $B_n\subset A_{n\cdot S}$, hence also $\bigcap_{n} B_n\subset \bigcap_{n}A_n$. Since $B_{n+1}\subset B_{n}$ we will inspect the following
    \begin{equation}\label{x92}
        \mathbb{P}(\bigcap_n B_n)= \lim_{n\to \infty} \mathbb{P}(B_{\ell})\prod_{k=\ell+1}^{n}\mathbb{P}(B_k|B_{k-1}),
    \end{equation}
    for some appropriate $\ell$ which we choose later.

    \begin{lemma}
    For all finite $\ell$ $\mathbb{P}(B_{\ell})>0.$
    \end{lemma}
    \begin{proof}
        For $B_0$ the statement follows from the first assumption of the theorem (see \eqref{x66}), since the probability that in the first $S^{*}$ level we retain every cylinder is positive, in which case $\mathbf{Z}^{(U^{*})}_{S^*}(\pmb{\theta}^{*})=\mathbf{e}_{U^{*}}^T\mathbf{B}_{\pmb{\theta}^{*}}\geq \mathbf{u*}$, and we are done.
        For $B_n$ we use similar argument, with positive probability we keep every cylinder until level $S^{*}+n\cdot S$ in which case for every $\pmb{\theta}\in[L^{n\cdot S}]$ we have that $\mathbf{Z}^{(U^{*})}_{S^*+Sn}(\pmb{\theta}^{*}\pmb{\theta})=\mathbf{e}_{U^{*}}^T\mathbf{B}_{\pmb{\theta}^{*}}\mathbf{B}_{\pmb{\theta}}\geq \mathbf{u}^T\mathbf{M}_{\pmb{\theta}^{*}}\geq \gamma^n\mathbf{v}\geq\eta^n\mathbf{v}$ for some $\mathbf{v}\in \mathcal{U}$, by the repeated application of the second assumption of the theorem.
    \end{proof}

Now we have to prove that there exists an $\ell$ such that
\begin{equation}
    \lim_{n\to \infty}\prod_{k=\ell+1}^{n} \mathbb{P}(B_k|B_{k-1})>0.
\end{equation}

    To prove that the probability in \eqref{x92} is greater than $0$, we consider $\mathbb{P}(\overline{B_k}|B_{k-1})$,
    where $\overline{B_k}$ is the complement of the event $B_k$.
    \begin{lemma}\label{x97}
        There exists a $0<\delta<1$ such that for all $k\geq 0$
        $$
        \mathbb{P}(\overline{B_k}|B_{k-1})\leq N^2 L^{S\cdot k} \delta^{\eta^{k-1}}.
        $$

    \end{lemma}
    Now we prove the statement assuming Lemma \ref{x97}.
    $$
    \lim_{n\to \infty}\prod_{k=\ell+1}^{n} \mathbb{P}(B_k|B_{k-1})\geq \lim_{n\to \infty}\prod_{k=\ell+1}^{n} (1-N^2L^{S\cdot k}\delta^{\eta^{k-1}}).
    $$
    Clearly we can choose $\ell$ in such a way that the product converges to a non-zero number.
\begin{proof}[Proof of Lemma \ref{x97}]

    We will consider the level $S^{*}+k\cdot S$ environments one by one. For simplicity we denote 
    $$
    h(k):=S^{*}+kS
    $$
    For a fix $\pmb{\theta}\in [L]^{kS}$, $\pmb{\theta}=\pmb{\theta}_1\dots \pmb{\theta}_k$,  (${\pmb{\theta}}_i\in[L]^{S}$) we denote 
    \begin{equation}
        \underline{\pmb{\theta}}^{-}:=\pmb{\theta}_1\dots \pmb{\theta}_{k-1}.
    \end{equation}
    The proof lies on the observation that for a fix $\pmb{\theta}$ as above, conditioned on $\mathbf{Z}^{(U^{*})}_{h(k)}(\pmb{\theta}^{*}\pmb{\theta}^-)$  the number of level $k\cdot S$ individuals 
    (the elements of the vector $\mathbf{Z}^{(U^{*})}_{h(k)}(\pmb{\theta}^{*}\pmb{\theta})$) can be written as a sum of independent random variables which are also independent of 
    $\mathbf{Z}^{(U^{*})}_{h(k)}(\pmb{\theta}^{*}\pmb{\theta}')$ for any $\pmb{\theta}'\in [L]^{S(k-1)}$ (see Lemma \ref{x41}). 
    \begin{align*}
        \mathbb{P}(\overline{B_k}|B_{k-1})
        &= \mathbb{P}( \{\exists 1\leq \ell\leq k, \, \exists \pmb{\theta}\in[L]^{\ell S}\, \forall \mathbf{u}\in \mathcal{U}\, \mathbf{Z}^{(U^{*})}_{h(\ell)}(\pmb{\theta}^{*}\pmb{\theta})\ngeq \gamma^\ell\mathbf{u}\}|B_{k-1}) \\ 
        &
        = \mathbb{P}( \{\exists \pmb{\theta}\in[L]^{kS}\, \forall \mathbf{u}\in \mathcal{U}\, \mathbf{Z}^{(U^{*})}_{h(k)}(\pmb{\theta}^{*}\pmb{\theta})\ngeq  \eta^k\mathbf{u}\}|B_{k-1})\\ 
        &
        \leq \sum_{\pmb{\theta}\in[L]^{S\cdot k}} \mathbb{P}( \{ \forall \mathbf{u}\in \mathcal{U}\, \mathbf{Z}^{(U^{*})}_{h(k)}(\pmb{\theta}^{*}\pmb{\theta})\ngeq  \eta^k\mathbf{u}\}|B_{k-1}) 
    \end{align*}
    The first equality if the definition on $\overline{B}_k$, the second one follows from the meaning of the condition $B_{k-1}$, that the event can't happen for $\ell<k$, and the last inequality is just the union bound.
    Fix an arbitrary $\pmb{\theta}\in [L]^{Sk}$, and inspect $\mathbb{P}( \{ \forall \mathbf{u}\in \mathcal{U}\, \mathbf{Z}^{(U^{*})}_{h(k)}(\pmb{\theta}^{*}\pmb{\theta})\ngeq\eta^k\mathbf{u}\}|B_{k-1})$.
    By Lemma \ref{x41}
    $$
    Z^{(U^{*})}_{S^*+Sn}(\pmb{\theta}^{*}\pmb{\theta})(W)
    =\sum_{V\in[N]} \sum_{\mathbf{i}\in\mathcal{X}^{(V)}_{*}(\pmb{\theta}^-)}
    Y^{(V)}_{\mathbf{i},\overline{\pmb{\theta}}|_{h(k)}, S}(W)=\sum_{V\in[N]} \sum_{j=1}^{Z^{(U^{*})}_{h(k-1)}(\pmb{\theta}^{*}\pmb{\theta}^-)(V)}
    Y^{(V)}_{j,\overline{\pmb{\theta}}|_{h(k)}, S}(W),
    $$
    where the random variables $Y^{(V)}_{j,\overline{\pmb{\theta}}|_{h(k)}}(W)$ are independent,distributed according to $Z^{(V)}_{S}(\pmb{\theta}_k)(W)$ and independent of $Z^{(U^{*})}_{h(k-1)}(\pmb{\theta}^{*}\pmb{\theta}^-)(V)$ for all $V\in [N]$ and $\pmb{\theta}'\in [L]^{S\cdot(k-1)}$ (including $\pmb{\theta}^{-}$).
    Recall that $B_{k-1}=\{\forall \pmb{\theta}\in [L]^{S\ell}\,\exists \mathbf{u}\in \mathcal{U}:\;\mathbf{Z}^{(U^{*})}_{h(\ell)}(\pmb{\theta}^{*}\pmb{\theta})>\eta^{\ell}\mathbf{u}\text{ for }\ell\leq k-1\}$, hence conditioned on $B_{k-1}$ for some $(v_0, \dots, v_{N-1})=\mathbf{v}\in \mathcal{U}$, we have that
    \begin{equation}\label{x29}
        Z^{(U^{*})}_{h(k)}(\pmb{\theta}^{*}\pmb{\theta})(W) =\sum_{V\in[N]} \sum_{j=1}^{Z^{(U^{*})}_{h(k-1)}(\pmb{\theta}^{*}\pmb{\theta}^-)(V)}
        Y^{(V)}_{j,\overline{\pmb{\theta}}|_{h(k)}, S}(W) \geq \sum_{\substack{V\in[N]\\v_V\neq 0}} \sum_{j=1}^{\ceil*{\eta^{k-1}v_V}}
        Y^{(V)}_{j,\overline{\pmb{\theta}}|_{h(k)}, S}(W).
    \end{equation}
    For this $\mathbf{v}$ and $\pmb{\theta}_k$ (by the second assumption of the theorem) there exists (at least one) $\widetilde{\mathbf{u}}$ such that 
\begin{equation}
    \eta^{k-1}\mathbf{v}^T\mathbf{M}_{\pmb{\theta}_k}\geq \eta^{k-1}\gamma\widetilde{\mathbf{u}}^T,
\end{equation}
hence we inspect
\begin{align*}
    &\mathbb{P}\big( \{ \forall \mathbf{u}\in \mathcal{U}\, \mathbf{Z}^{(U^{*})}_{h(k)}(\pmb{\theta}^{*}\pmb{\theta})\ngeq \eta^k\mathbf{u}\}|B_{k-1}\big)
    \leq \mathbb{P}\big( \{ \mathbf{Z}^{(U^{*})}_{h(k)}(\pmb{\theta}^{*}\pmb{\theta})\ngeq \eta^k\widetilde{\mathbf{u}}^T\}|B_{k-1}\big) \\
    &
    \leq \sum_{\substack{W\in[N]\\\widetilde{u}_W\neq 0}}\mathbb{P}\big( \{ \mathbf{Z}^{(U^{*})}_{h(k)}(\pmb{\theta}^{*}\pmb{\theta})<\eta^k\widetilde{u}_{W}\}|B_{k-1}\big)\\
    & \leq  \sum_{\substack{W\in[N]\\\widetilde{u}_W\neq 0}}\mathbb{P}\big( \{ \sum_{\substack{V\in[N]\\v_V\neq 0}} \sum_{k=1}^{\ceil*{\eta^{k-1}v_V}} Y^{(V)}_{j,\overline{\pmb{\theta}}|_{h(k)}, S}(W)< \eta^k\widetilde{u}_{W}\}|B_{k-1}\big)\\
    &=\sum_{\substack{W\in[N]\\\widetilde{u}_W\neq 0}}\mathbb{P}\big( \{ \sum_{\substack{V\in[N]\\v_V\neq 0}} \sum_{j=1}^{\ceil*{\eta^{k-1}v_V}} Y^{(V)}_{j,\overline{\pmb{\theta}}|_{h(k)}, S}(W)< \eta^k\widetilde{u}_{W}\}\big).
\end{align*}
The second inequality follows from the definition of $\ngeq$ and the union bound, the third is from \eqref{x29}, and the last inequality is the consequence of the independence of the summands from the condition. 
Now for $W$ such that $\widetilde{u}_W\neq 0$ we inspect the event 
$$
D_W=\bigg\{ \sum_{\substack{V\in[N]\\v_V\neq 0}} \sum_{j=1}^{\ceil*{\eta^{k-1}v_V}} Y^{(V)}_{j,\overline{\pmb{\theta}}|_{h(k)}, S}(W)< \eta^k\widetilde{u}_W\bigg\}.
$$
Recall that 

\begin{equation*}
    Y^{(V)}_{j,\overline{\pmb{\theta}}|_{h(k)}, S}(W)\sim Z^{(V)}_{S}(\pmb{\theta}_k)(W)\text{, hence } \mathbb{E}(Y^{(V)}_{j,\overline{\pmb{\theta}}|_{h(k)}, S})=\mathbb{E}(Z^{(V)}_{S}(\pmb{\theta}_k)(W))=\mathbf{M}_{\pmb{\theta}_k}(V, W).
\end{equation*}
Since 
$$
\sum_{\substack{V\in[N]\\v_V\neq 0}}\frac{1}{\gamma}v_V \mathbf{M}_{\pmb{\theta}_k}(V, W)\geq \widetilde{u}_W,
$$
we get that 
\begin{align*}
D_W
&\subset \bigg\{ \sum_{\substack{V\in[N]\\v_V\neq 0}} \sum_{j=1}^{\ceil*{\eta^{k-1}v_V}}Y^{(V)}_{j,\overline{\pmb{\theta}}|_{h(k)}, S}(W)\leq \eta^k\sum_{\substack{V\in[N]\\v_V\neq 0}}\frac{1}{\gamma}v_V \mathbf{M}_{\pmb{\theta}_k}(V, W)\bigg\}\\
&\subset \bigcup_{\substack{V\in[N]\\v_V\mathbf{M}_{\pmb{\theta}_k}(V, W)\neq 0}}\bigg\{\sum_{j=1}^{\ceil*{\eta^{k-1}v_V}}Y^{(V)}_{j,\overline{\pmb{\theta}}|_{h(k)},S}(W)\leq \eta^k\frac{1}{\gamma}v_V\mathbf{M}_{\pmb{\theta}_k}(V, W)\bigg\},
\end{align*}
hence
\begin{align*}
    &\sum_{\substack{W\in[N]\\\widetilde{u}_W\neq 0}}\mathbb{P}\big( \{ \sum_{\substack{V\in[N]\\v_V\neq 0}} \sum_{j=1}^{\ceil*{\eta^{k-1}v_V}}Y^{(V)}_{j,\overline{\pmb{\theta}}|_{h(k)}, S}(W)< \eta^k\widetilde{u}_W\}\big)\\
    &\leq \sum_{\substack{W\in[N]\\\widetilde{u}_W\neq 0}}
    \sum_{\substack{V\in[N]\\v_V\mathbf{M}_{\pmb{\theta}_k}(V, W)}\neq 0}\mathbb{P}\big(\{\sum_{j=1}^{\ceil*{\eta^{k-1}v_V}}Y^{(V)}_{j,\overline{\pmb{\theta}}|_{h(k)}, S}(W)\leq \eta^{k-1}v_V\frac{\eta}{\gamma} \mathbf{M}_{\pmb{\theta}_k}(V, W)\}\big).
\end{align*}
The summands here are i.i.d random variables with expectation $0<\mathbf{M}_{\theta_{k}}(V,W)$, and since $\eta<\gamma$, we have $\frac{\eta}{\gamma}\mathbf{M}_{\pmb{\theta}_k}(V,W)<\mathbb{E}(Y^{(V)}_{j,\overline{\pmb{\theta}}|_{h(k)}, S}(W))$, hence we can use the large deviation lemma (Fact \ref{x78}), to get that there exists a $0<\delta(\pmb{\theta}_k, V,W)<1$ such that
\begin{multline*}
    \mathbb{P}\big(\{\sum_{j=1}^{\ceil*{\eta^{k-1}v_m}}Y^{(V)}_{j,\overline{\pmb{\theta}}|_{h(k)}, S}(W)\leq \eta^{n-1}v_V\frac{\eta}{\gamma} \mathbf{M}_{\pmb{\theta}_k}(V,W)\}\big)\leq \delta(\pmb{\theta}_k, V,W)^{\eta^{k-1}v_V}
    \\
    \leq \delta(\theta_k, V,W)^{\eta^{n-1}},
\end{multline*}
where the last inequality follows from the fact, that $v_V\geq1$ whenever $v_V\neq 0$.

Let 
\begin{equation*}
\delta=\max\left\{ \delta(\pmb{\theta}_k, V, W): \; \pmb{\theta}_k\in[L]^{S}, \; V, W\in [N]\;\;\mathbf{M}_{\pmb{\theta}_k}(V,W)\neq 0 \right\}<1.
\end{equation*}
Then 
\begin{equation*}
    \sum_{\substack{W\in[N]\\\widetilde{u}_W\neq 0}}
    \sum_{\substack{V\in[N]\\v_V\mathbf{M}_{\pmb{\theta}_k}(V, W)}\neq 0}\mathbb{P}\big(\{\sum_{j=1}^{\ceil*{\eta^{k-1}v_m}}Y^{(V)}_{j,\overline{\pmb{\theta}}|_{h(k)}, S}(W)\leq \eta^{k-1}v_V\frac{\eta}{\gamma} \mathbf{M}_{\theta_k}(V, W)\}\big)\leq N^2 \delta^{\eta^{k-1}},
\end{equation*}
from which we get that
\begin{equation*}
    \mathbb{P}\big(\overline{B_k}|B_{k-1}\big)\leq L^{S\cdot k}N^2 \delta^{\eta^{k-1}}.
\end{equation*}

\end{proof}
This proves Lemma \ref{x97}, and since we already proved the statement assuming the lemma, this also proves the statement.
\end{proof}

\begin{appendix}
    \section{MBPREs}\label{app1}
This section is the continuation of Section \ref{x23}, we use the notation from therein.
For each $i\in[N]$ and $\theta\in[L]$ there is an offspring vector random variable
\begin{equation*}
\mathbf{Y} _{\theta}^{(i)}=(Y _{\theta}^{(i) }(0), \dots ,Y _{\theta}^{(i)}(N-1))
 \end{equation*}
which is distributed according to $f_{\theta}^{(i)}$, i.e.
 \begin{equation*}
 \mathbf{P}\left(\mathbf{Y}_{\theta}^{(i)}=\mathbf{y}\right)=f _{\theta}^{(i) }[\mathbf{y}] ,\, \text{ for every } \mathbf{y}\in\mathbb{N} _{0}^{N}.
\end{equation*}
Now for a fix $\overline{\pmb{\theta}}\in \Sigma$ we define $\left\{ \mathbf{Z}_n(\overline{\pmb{\theta}})\right\}_{n\geq 0}$ the \textit{N-type branching process in the varying environment}  $\overline{\pmb{\theta}}=(\theta_1,\theta_2, \dots)$ (sometimes called time inhomogeneous multitype branching process).
We start at level $0$, where the number of different types of individuals is deterministic and is given by
$\mathbf{z}_0:=(z _{0}^{(0) },\dots ,z _{0}^{ (N-1)})$, that is
$\mathbf{Z}_0(\overline{\pmb{\theta}})=\mathbf{Z}_0:=\mathbf{z}_0$. Given $\mathbf{Z}_0(\overline{\pmb{\theta}}), \dots, \mathbf{Z}_{n-1}(\overline{\pmb{\theta}})$ we define $\mathbf{Z}_n(\overline{\pmb{\theta}})$ as follows.

We consider the sequence of vector random variables
          $$
          \left\{\mathbf{Y}_{j,\theta_n}^{(i)}
              =(Y _{j,\theta_n}^{(i) }(0),\dots Y _{j,\theta_n}^{(i) }(N-1)) : i\in [N],\ j\in\{1,\dots, \mathbf{Z}_{n-1}^{(i)}(\overline{\pmb{\theta}})\}\right\},
          $$
          
          \begin{enumerate}[label={(\alph*)}]
            \item $\left\{\mathbf{Y}_{j,\theta_n}^{(i)}\right\}_{i,j}$ are independent of each other and $\mathbf{Z}_{n-1}$, and 
            \item $\mathbf{Y}_{j,\theta_n}^{(i)}\stackrel{d}{=}\mathbf{Y}_{\theta_n}^{ (i)}$.
          \end{enumerate}

          Informally the meaning of the $\ell $-th component,
          $Y_{j,\theta_n}^{(i)}(\ell )$ of $\mathbf{Y}_{j,\theta_n}^{(i)}$ is the number of type-$\ell$ individuals of level $n$ given birth by the $j$-th level $n-1$ type-$i$ individuals.
          
Then the vector of the numbers of various type level-$n$ individuals is
\begin{equation} \label{eq:z72}
\mathbf{Z}_n(\overline{\pmb{\theta}})=(Z_n^{(0)}(\overline{\pmb{\theta}}), \dots, Z_{n}^{(N-1)}(\overline{\pmb{\theta}})):= \sum_{i=0}^{N-1}\sum_{j=1}^{Z_{n-1}^{(i)}(\overline{\pmb{\theta}})}\mathbf{Y}_{j,\theta_n}^{(i)},
\end{equation}
where $Z _{n}^{(i)}(\overline{\pmb{\theta}})$ stands for the number of type $i$ individual in the $n$-th generation.

The corresponding MBPRE is the process $\mathcal{Z}=\{\mathbf{Z}_n\}_{n=1}^{\infty}$ satisfying that 
 for each environment $\overline{\pmb{\theta}}$ chosen according to the measure $\nu$ and for each $\mathbf{z}_0, \mathbf{z_1}, \dots, \mathbf{z_k}\in \mathbb{N}_{0}^N$,
	\begin{multline*}
		\mathbb{P}(\mathbf{Z}_1=\mathbf{z}_1, \dots, \mathbf{Z}_k=\mathbf{z}_k|\mathbf{Z}_0=\mathbf{z}_0, \mathcal{V}=\overline{v})
		= \mathbf{P}(\mathbf{Z}_1(\overline{\pmb{\theta}})=\mathbf{z}_1, \dots, \mathbf{Z}_k(\overline{\pmb{\theta}})=\mathbf{z}_k)\text{  a.s.}.
	\end{multline*}
 We write $\mathbb{P}\left(\cdot\right)$ and $\mathbb{E}\left(\cdot\right)$ for the probabilities and expectations in random environments.
 For each $\theta\in[L]$ the $N\times N$ expectation matrices are  
 \begin{equation}\label{x19}
    \mathbf{M}_{\theta}(i, j)= \mathbb{E} (Y^{(i)}_{\theta}(j)), \; i, j\in[N].
 \end{equation}
 \section{Higher dimensions}\label{app2}

\subsection*{The IFS}
We consider the $d$-dimensional IFS---analogously to the one dimensional \eqref{a99}---of the following form:
\begin{align}\label{x11}
    &\mathcal{S}:=\left\{
        S_i(x):=\frac{1}{L}\mathbf{x}+\mathbf{t}_i
     \right\}_{i=0}^{M-1}
     , S_i:\mathbb{R}^{d}\to\mathbb{R}^{d},
    \\
    &L\in\N \setminus \left\{0,1\right\},
    \, \mathbf{t}_i\in \mathbb{N}^{d},\, \exists h\in \mathbb{N},\,L-1|h,\,\forall j\in[d]\, 0=\min_{i}\mathbf{t}_i(j), \, h=\max_{i}\mathbf{t}_i(j).
    \end{align}
We denote $I=[0,\frac{h}{L-1}L]^{d}$, and for $i_1, \dots,i_d \in[h/(L-1)-1]$
\begin{equation}
        J^{(i_1, \dots, i_d)}:=[i_1\cdot L, (i_1+1)\cdot L]\times \dots \times[i_d\cdot L, (i_d+1)\cdot L].
\end{equation}
Now we consider the natural measure as in \eqref{a94}, and those intervals $J^{(i_1, \dots, i_d)}$ for which $\nu(J^{(i_1, \dots, i_d)})>0$. We arrange them in lexicographical order to get the basic cubes $J^{(0)}, \dots, J^{(N)}$ analogously to the one-dimensional case. The vertex of $J^{(k)}$ closest to the origin is denoted by $\mathbf{b}_k=(b_k(0),\dots, b_k(d-1))$.
We consider the $L^d$-adic cubes inside the basic cubes, arranged in lexicographical order: For $(\theta_0, \dots, \theta_{n-1})=\pmb{\theta} \in [L^d]^n$ we consider 
$((\widetilde{\theta}_0(0), \dots, \widetilde{\theta}_{0}(n-1)), \dots, (\widetilde{\theta}_{d-1}(0), \dots, \widetilde{\theta}_{d-1}(n-1)))=\widetilde{\pmb{\theta}}\in ([L]^{n})^{d}$ such that for $i\in[n]$ $\theta_i= L^{d-1}\cdot\widetilde{\theta}_{0}(i)+ \dots+\widetilde{\theta}_{d-1}(i)=\sum_{k=0}^{d-1}L^{k}\theta_{d-1-k}(i)$. Then

\begin{equation}
    J^{(N)}_{\pmb{\theta}}=\bigtimes_{k=0}^{d-1}
        \left[b_k(k)L+\sum_{\ell =1}^{n}
        \widetilde{\theta}_k (\ell) L^{-(\ell -1)}  ,
        b_k(k)L+\sum_{\ell =1}^{n }
        \widetilde{\theta}_k (\ell) L^{-(\ell -1)}+L^{-(n-1)} \right]
\end{equation}
We can again describe these systems using the matrices from \eqref{a90}, namely for $\theta\in [L^{d}]$ and $i,k\in [N]$, 

\begin{equation}
        \mathbf{B}_{\theta} (i,k) :=
        \#\left\{ \ell \in [M]:
        S_\ell (J^{(k)})=J _{\theta }^{(i) }
        \right\}.
\end{equation}

The randomization of such sets happens just as in Section \ref{a77} for a given parameter $p\in(0,1]$ and $\mathcal{S}$ as above. We refer to the resulting random IFSs as $d$-dimensional CISSIFSs, and will denote the particular attractors by $\Lambda^{(d)}_{\mathcal{S}, p}$. Again the corresponding expectation matrices $\mathbf{M}_{\theta} =p\cdot \mathbf{B}_{\theta}$.

The three main statements of the paper holds in higher dimensions.
\begin{proposition}
    Consider the attractor $\Lambda_{\mathcal{S}, p}$ of the $d$-dimensional CISSIFS and the corresponding $N\times N$ expectation matrices $\mathcal{M}=\{\mathbf{M}_0, \dots, \mathbf{M}_{L^d-1}\}$ ($\mathcal{B}=\{\mathbf{B}_i=p^{-1}\mathbf{M}_i\}$). Assume that they are good in the sense of Definition \ref{x52} with respect to the uniform measure ($\nu=(L^{-d}, \dots, L^{-d})^{\N}$) on $\Sigma=[L^d]^{\N}$. Denote $\lambda$ the Lyapunov exponent of $\mathcal{B}$ with respect to $\nu$.

    Then
    \begin{enumerate}
        \item the conclusions of Theorem \ref{z42} hold for a positive constant multiple of the $d$-dimensional Lebesgue measure.
        \item Proposition \ref{x94}, describing the almost sure empty interior holds.
        \item Proposition \ref{x98}, describing the existence of an interior point in the attractor holds.
    \end{enumerate} 
\end{proposition}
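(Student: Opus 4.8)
The plan is to observe that the $d$-dimensional construction is, by design, a verbatim copy of the one-dimensional theory with the alphabet $[L]$ replaced by $[L^d]$ and the line replaced by $\mathbb{R}^d$; the whole argument therefore reduces to checking that the structural objects on which the one-dimensional proofs rest are reproduced faithfully. Concretely, I would first establish the $d$-dimensional analogue of the coding identity \eqref{z44}, that is, for $\pmb{\theta}=(\theta_1,\dots,\theta_n)\in[L^d]^n$ one has $\mathbf{B}_{\pmb{\theta}}(U,V)=(\mathbf{B}_{\theta_1}\cdots\mathbf{B}_{\theta_n})(U,V)=\#\{(\ell_1,\dots,\ell_n)\in[M]^n: S_{\ell_1\dots\ell_n}(J^{(V)})=J^{(U)}_{\pmb{\theta}}\}$. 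This is the only genuinely combinatorial point: it requires that the bijection $\pmb{\theta}\mapsto\widetilde{\pmb{\theta}}$ between $[L^d]^n$ and $([L]^n)^d$ supplied in the setup intertwine composition of the maps $S_\ell$ with matrix multiplication, which holds because each basic cube subdivides into exactly $L^d$ congruent $L^d$-adic subcubes and a single digit $\theta_i$ records simultaneously the coordinate-wise refinements $\widetilde{\theta}_0(i),\dots,\widetilde{\theta}_{d-1}(i)$.

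With this identity in hand, the multitype branching process description of Section \ref{x42} transfers without change: the types are the $N$ basic cubes, the environment alphabet is $[L^d]$, the offspring of a retained cylinder are the level-$(n+1)$ cylinders inside the relevant $L^d$-adic subcube, each retained independently with probability $p$, so that the analogue of \eqref{a84} holds and the expectation matrices are again $\mathbf{M}_\theta=p\,\mathbf{B}_\theta$. Consequently the associated MBPRE, its Lyapunov exponent $\lambda$ (Definition \ref{x84}) and its lower spectral radius (Definition \ref{q91}) are defined exactly as before, and goodness in the sense of Definition \ref{x52} is unchanged. The one point deserving explicit verification for part (1) is that when $\nu$ is the uniform measure $(L^{-d},\dots,L^{-d})^{\N}$ on $[L^d]^{\N}$ the push-forward $\widetilde{\nu}$ is a positive constant multiple of $d$-dimensional Lebesgue measure on $\bigcup_U J^{(U)}$; this is immediate from the product decomposition $[L^d]=[L]^d$, under which the coordinate-wise $L$-adic expansions are independent and uniform, so the push-forward factors into a product of one-dimensional Lebesgue measures.

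Given these two facts, each of the three conclusions follows by rerunning the corresponding one-dimensional proof verbatim. For part (1) the chain Theorem \ref{z68} $\Rightarrow$ Lemma \ref{a78} $\Rightarrow$ Lemma \ref{d77} together with the $0$--$1$ law applies unchanged, since every step is phrased in terms of the abstract MBPRE and the measure $\widetilde{\nu}$, now interpreted as Lebesgue measure on $\mathbb{R}^d$. For part (2), the proof of Proposition \ref{x94} uses only the cocycle relation, Markov's inequality applied to $\mathbb{E}(\sum_U \mathbf{Z}^{(U)}_{k+n})=\|\mathbf{M}_{\mathbf{c}_k\pmb{\theta}^n}\|$, and the fact that the points $\bigcup_U\bigcap_n J^{(U)}_{\mathbf{c}_k\pmb{\theta}^n}$ form a countable dense subset of $\bigcup_U J^{(U)}$; in $\mathbb{R}^d$ this set is still dense, so the attractor has empty interior almost surely. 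For part (3), the existence-of-interior-point argument (Lemmas \ref{x41} and \ref{x97}, the large deviation estimate, and the product bound in \eqref{x92}) is purely probabilistic and depends on the geometry only through hypothesis 1(b), which now reads that $\Lambda_{\mathcal{S}}\cap J^{(\pmb{\theta}^*)}$ contains a $d$-dimensional interior point; with that reading the same computation shows $\Lambda^{(d)}_{\mathcal{S},p}$ contains a cube with positive probability, hence almost surely conditioned on non-extinction.

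I expect the only real obstacle to be the bookkeeping in the coding identity of the first step: one must check that the lexicographic ordering of the $L^d$-adic subcubes and the digit-splitting $\theta_i=\sum_{k=0}^{d-1}L^{k}\widetilde{\theta}_{d-1-k}(i)$ are mutually consistent across all $n$ levels, so that concatenation of words in $[L^d]^{*}$ matches composition of the similarities $S_\ell$ coordinate by coordinate. Once this compatibility is verified at a single level ($n=1$) it propagates to all $n$ by associativity of composition, after which no further $d$-dependent difficulty arises and the transcription of the one-dimensional proofs is routine.
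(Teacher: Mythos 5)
Your proposal is correct and follows exactly the paper's approach: the paper's entire proof is the one-line assertion that the argument ``almost character to character agrees'' with the one-dimensional case, and your write-up is a careful justification of precisely that transcription (the $d$-dimensional coding identity, the product decomposition $[L^d]=[L]^d$ giving Lebesgue measure, and the unchanged probabilistic machinery). If anything, you supply more detail than the paper does, correctly isolating the only points where the dimension enters.
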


\begin{proof}
    The proof is almost character to character agrees to the ones in the one-dimensional case. 
\end{proof}
\begin{remark}
    Here we only consider the question of existence of interior points, however in higher dimensions it is a different question whether the attractor is totally disconnected or not, which question we do not address here. However, in case of the example above it is clear that if $p$ is big enough then there exists a curve connecting the left and the right wall with positive probability, since the system contains the $2\times 2$ Mandelbrot percolation as a subsystem, which is known to percolate with positive probability for large enough $p$ (for the current best bound see for example \cite{henk_don_phd}). 
    We can ask the usual questions regarding percolation in $2$-dimensional CISSIFSs, the first natural one is if it is true that the set either percolates with positive probability or is totally disconnected with probability one (conditioned on non-extinction).
    Another, general question is the almost sure Hausdorff (which agrees to the box-dimension by \cite[Theorem 3.5]{zbMATH06808299}) dimension of such sets.
\end{remark}
\begin{figure}
    \centering
    \includegraphics[width=0.33\linewidth]{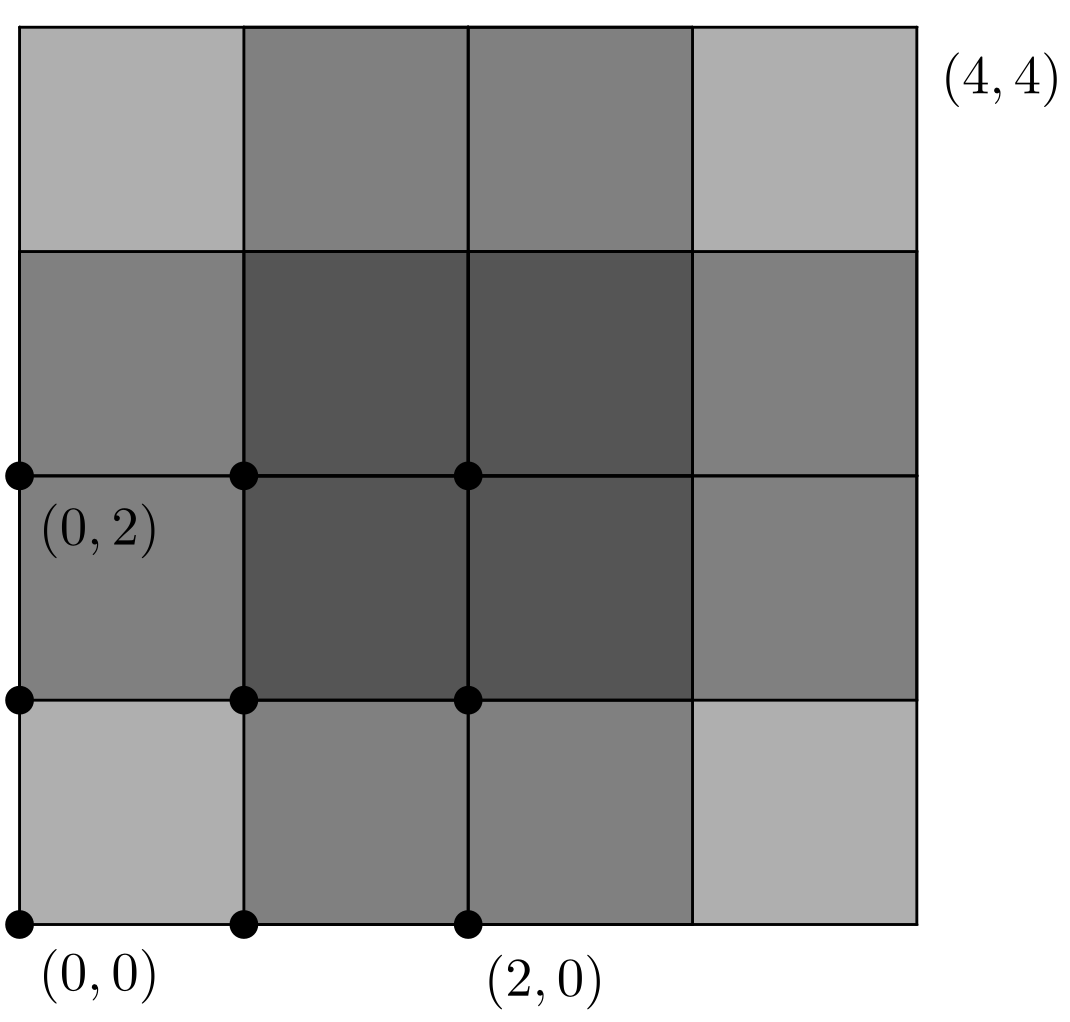}\label{y99}
    \includegraphics[width=0.36\linewidth]{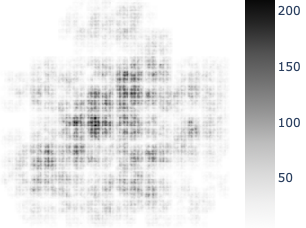}\label{y98}
    \includegraphics[width=0.28\linewidth]{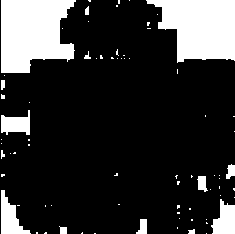}\label{y97}
    \caption{On the first figure the first level approximation of the IFS in Example \ref{x02} is depicted. The $\{\mathbf{t}_i\}_{i=1}^{9}$ translations are denoted by black points on the figure. On the second and third the level $6$ approximation of the random set with parameter $p=0.7$ is shown. On the first the darkness depicts the number of cylinders covering a given square, on the second everything that is retained up to the $6$-th approximation is black.}
  \end{figure}
\begin{example}[Overlapping Mandelbrot percolation]\label{x02}
    The following is one of the simplest 2-dimensional examples.
    \begin{equation}
        \mathcal{S} =\left\{\frac{\mathbf{x}}{2}+\mathbf{t}_i\right\}_{i=1}^{9},
    \end{equation}
    where $\mathbf{t}_i$ runs through the set $\{0, 1, 2\}^2$.
\end{example}
\begin{lemma}
    In the Overlapping Mandelbrot percolation example (Example \ref{x02}) we get the following bounds:
    \begin{itemize}
        \item $\Lambda_p$ contains a ball almost surely conditioned on non extinction iff $p=1$.
        \item When $p>0.993$ then by \cite{henk_don_phd} the set contains a curve which connects the left and right walls with positive probability.
        \item When $p>0.7712$ then the set has positive two dimensional Lebesgue measure almost surely conditioned on non-extinction.
    \end{itemize}
\end{lemma}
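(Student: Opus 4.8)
The plan rests on a single structural observation: because the translation set $\{0,1,2\}^2$ is the Cartesian square of $\{0,1,2\}$, the IFS splits coordinatewise. Each map acts as $S_{(i,j)}(\mathbf x)=(S_i,S_j)(\mathbf x)$ with $S_i(x)=\tfrac12 x+t_i$, $t_i\in\{0,1,2\}$; the one-dimensional attractor is the full interval $[0,4]$ with the two basic intervals $[0,2],[2,4]$, so in the plane there are $N=4$ basic cubes $J^{(k_1)}\times J^{(k_2)}$ and every coding matrix factorises as a Kronecker product
\begin{equation}
\mathbf B_{\theta}=\mathbf B^{(1)}_{\theta_1}\otimes\mathbf B^{(1)}_{\theta_2},\qquad
\mathbf B^{(1)}_{0}=\begin{pmatrix}1&0\\1&1\end{pmatrix},\quad \mathbf B^{(1)}_{1}=\begin{pmatrix}1&1\\0&1\end{pmatrix},
\end{equation}
where $\theta\in[4]$ carries the two binary digits $(\theta_1,\theta_2)$ and $\mathbf B^{(1)}_0,\mathbf B^{(1)}_1$ are exactly the right-angled gasket matrices of Example \ref{u99}. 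Under the uniform measure on $[4]^{\N}$ the two digit sequences are independent and uniform on $[2]^{\N}$, so both the lower spectral radius and the Lyapunov exponent of $\mathcal B$ reduce to their one-dimensional counterparts through the cross-norm identity $\|\mathbf P\otimes\mathbf Q\|=\|\mathbf P\|\,\|\mathbf Q\|$.

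For the first item I would compute $\widecheck\rho(\mathcal B)=1$. The upper bound comes from the products $(\mathbf B^{(1)}_0)^n\otimes(\mathbf B^{(1)}_0)^n$, whose entry sum is $(n+2)^2$ and hence has $n$-th root tending to $1$; the lower bound $\widecheck\rho\geq1$ is immediate since every product of these allowable non-negative integer matrices has no zero column and is therefore a nonzero integer matrix. Thus $\widecheck\rho(\mathcal B)^{-1}=1$, and the two-dimensional version of Proposition \ref{x94} gives that $\Lambda_{\mathcal S,p}$ has empty interior---so contains no ball---almost surely for every $p<1$. When $p=1$ nothing is discarded and $\Lambda_{\mathcal S,1}=[0,4]^2$ contains balls; this yields the stated equivalence.

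For the crossing statement I would exhibit $2\times2$ Mandelbrot percolation as a subsystem. The four corner maps, those with $\mathbf t\in\{0,2\}^2$, send $[0,4]^2$ onto the congruent quadrants $[0,2]^2,\dots,[2,4]^2$, which tile $[0,4]^2$ with disjoint interiors; in the coin-tossing construction these four maps are retained independently with probability $p$ at every level, so the set $\Lambda^{\mathrm{MP}}_p$ obtained by keeping \emph{only} corner maps is a genuine $2\times2$ fractal percolation coupled inside $\Lambda_{\mathcal S,p}$. Any left-right crossing of $\Lambda^{\mathrm{MP}}_p$ is a crossing of $\Lambda_{\mathcal S,p}$, so the bound $p>0.993$ from \cite{henk_don_phd} for positive crossing probability transfers verbatim.

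For the Lebesgue statement I would invoke the two-dimensional form of Theorem \ref{z42}: conditioned on non-extinction, $\Lambda_{\mathcal S,p}$ has positive planar measure almost surely precisely when $p>\e{-\lambda}$, where $\lambda=\lambda(\nu,\mathcal B)$ is the Lyapunov exponent for the uniform $\nu$. Goodness in the sense of Definition \ref{x52} is checked directly, e.g. $\mathbf B_{(0,0)}\mathbf B_{(1,1)}=(\mathbf B^{(1)}_0\mathbf B^{(1)}_1)\otimes(\mathbf B^{(1)}_0\mathbf B^{(1)}_1)$ is strictly positive. The cross-norm identity together with the independence of the two digit sequences gives $\lambda=2\lambda_1$, where $\lambda_1$ is the gasket exponent; the rigorous enclosure $0.3961<\lambda_1<0.3962$ from \cite{pollicott_Vytnova} used in Example \ref{u99} then yields $\e{-\lambda}<0.46$, so in particular any $p>0.7712$ forces positive Lebesgue measure. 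The main obstacle is precisely this quantitative input: the Kronecker bookkeeping is routine, but the two external ingredients---the crossing threshold of \cite{henk_don_phd} and the Lyapunov enclosure of \cite{pollicott_Vytnova}---are the non-self-contained steps, and one must additionally confirm that the hypotheses of the two-dimensional Theorems \ref{z42} and \ref{z68} (goodness, and the independent Binomial offspring structure) hold for the $4\times4$ system.
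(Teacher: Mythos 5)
Your proposal is correct, and on the first two bullets it follows essentially the paper's own argument: the paper likewise obtains $\widecheck{\rho}(\mathcal{B})=1$ by combining an upper bound coming from powers of $\mathbf{B}_0$ (the paper phrases it as $\rho(\mathbf{B}_0)=1$, you compute $\|\mathbf{B}_0^n\|=(n+2)^2$ explicitly) with the lower bound that products of allowable non-negative integer matrices are nonzero, and then applies the $d$-dimensional version of Proposition \ref{x94}; and it gets the crossing statement exactly as you do, by observing that the four corner maps embed the $2\times 2$ Mandelbrot percolation as a subsystem and quoting \cite{henk_don_phd}. Where you genuinely diverge is the Lebesgue-measure bullet. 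The paper never uses the Kronecker structure: it invokes Hennion's theorem expressing $\lambda$ through the minimal column sum $(\cdot)_*$, uses supermultiplicativity of $(\cdot)_*$ for allowable non-negative matrices to get $\lambda\geq \frac{1}{m}\mathbb{E}(\alpha_m)$ with $\alpha_m=\log(\mathbf{B}_{\theta_1}\cdots\mathbf{B}_{\theta_m})_*$, and then makes the finite computation for $m=2$ that $\mathbb{P}(\alpha_2=0)=1/4$, yielding $\lambda\geq\frac{3}{8}\log 2$ and hence the threshold $2^{-3/8}\approx 0.7711$, which is precisely where the constant $0.7712$ in the statement comes from. You instead factor every product as a Kronecker product via the mixed-product property, use that under the uniform measure on $[4]^{\mathbb{N}}$ the two digit sequences are independent and uniform on $[2]^{\mathbb{N}}$, and conclude the exact identity $\lambda=2\lambda_1$ with $\lambda_1$ the gasket exponent of Example \ref{u99}; the Pollicott--Vytnova enclosure $\lambda_1>0.3961$ then gives $\e{-\lambda}<0.46$, so your argument actually proves a strictly stronger statement (positive Lebesgue measure already for $p$ slightly above $0.45$, not just above $0.7712$). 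The trade-offs are clear: your route buys an exact formula for $\lambda$ and a much sharper critical bound, but it imports a nontrivial external numerical certificate from \cite{pollicott_Vytnova} and exploits the special product structure of this example, whereas the paper's route is elementary and self-contained (a one-page $m=2$ computation), produces exactly the constant claimed in the lemma, and its mechanism (min-column-sum supermultiplicativity plus the law of large numbers) applies to CISSIFSs with no Kronecker structure. Both arguments are valid proofs of the stated bounds, provided you also verify goodness of $\mathcal{B}$ for the two-dimensional Theorem \ref{z42}, which you do via the strictly positive product $\mathbf{B}_{(0,0)}\mathbf{B}_{(1,1)}$.
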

In this case $N=4$, and the corresponding matrices are:
    \begin{align*}
        \mathbf{B}_0=
            \begin{bmatrix}
             1 & 0 & 0 & 0 \\
             1 & 1 & 0 & 0 \\
             1 & 0 & 1 & 0 \\
             1 & 1 & 1 & 1 \\
            \end{bmatrix},\,
        \mathbf{B}_1=
                \begin{bmatrix}
                 1 & 1 & 0 & 0 \\
                 0 & 1 & 0 & 0 \\
                 1 & 1 & 1 & 1 \\
                 0 & 1 & 0 & 1 \\
                \end{bmatrix},\,
        \mathbf{B}_2=
                \begin{bmatrix}
                    1 & 0 & 1 & 0 \\
                    1 & 1 & 1 & 1 \\
                    0 & 0 & 1 & 0 \\
                    0 & 0 & 1 & 1 \\
                   \end{bmatrix},\,
                   \mathbf{B}_3=
                   \begin{bmatrix}
                    1 & 1 & 1 & 1 \\
                    0 & 1 & 0 & 1 \\
                    0 & 0 & 1 & 1 \\
                    0 & 0 & 0 & 1 \\
                   \end{bmatrix}.
    \end{align*}
    All of them are allowable.
    Since the spectral radius of $\mathbf{B}_0=1$ it follows that the lower spectral radius can only be exactly equal to 1 (since the matrices are all allowable, hence their product as well, and are non-negative integer matrices), hence whenever $p<1$ the interior of the attractor is almost surely empty.
    We inspect the positivity of the Lebesgue measure. We only give a crude estimation for the Lyapunov exponent, to show that $\lambda>0$, implying that there exists an interval where no interior point exists but the Lebesgue measure is positive. For this we use a Theorem of Hennion (\cite[Theorem 2]{Hennion97}) according to which the Lyapunov exponent $\lambda=\lim_{n\to\infty}1/n\log (\mathbf{M}_{\theta_1}\dots\mathbf{M}_{\theta_n})_{*}$, where
    $(\mathbf{B})_{*}$ is the smallest column sum of the matrix $\mathbf{B}$. We estimate $\lambda$, by using the fact that for all $\mathbf{B}_1, \mathbf{B}_2$ non-negative, allowable matrices $(\mathbf{B}_1\cdot\mathbf{B}_2)_{*}\geq (\mathbf{B}_1)_{*}\cdot(\mathbf{B}_2)_{*}$.
    From this it follows that
\begin{equation*}
\lambda\geq\lim_{(k\cdot m)\to\infty}\frac{1}{k\cdot m}\log \left[(\mathbf{M}_{\theta_1}\cdots \mathbf{M}_{\theta_m})_{*}\cdots (\mathbf{M}_{\theta_{(k-1)m+1}}\cdots\mathbf{M}_{\theta_{k\cdot m}})_{*}\right].
\end{equation*}
\end{appendix}
This, by the law of large numbers tends to $1/m\mathbb{E}(\alpha_m)$, where the random variable $\alpha_m=\log(\mathbf{M}_{\theta_1}\dots \mathbf{M}_{\theta_m})_{*}$, where $\theta_1\dots\theta_m$ is chosen uniformly from $[4]^{m}$. 

In particular for $m=2$ it can be calculated (using for example Wolfram Mathematica) that $\mathbb{P}(\alpha_2=0)=1/4$, hence $1/2 \mathbb{E}(\alpha_2)\geq 3/8\log 2>0$. This gives the estimation for the critical value above which the Lebesgue measure is positive almost surely conditioned on non-extinction $p^{*}\leq 0.7712$. Using larger $m$ we can possibly get a better estimation for the critical value $p^{*}$, so that for $p>p^{*}$ $\Lambda_{\mathcal{S},p}$ has positive Lebesgue measure almost surely conditioned on non-extinction.

\begin{acks}[Acknowledgments]
  We would like to say thanks to Reza Mohammadpour and to Alex Rutar for all the useful discussions.
\end{acks}

\begin{funding}
VO is supported by National Research,
Development and Innovation Office - NKFIH, Project FK134251.
	
KS is supported by National Research,
Development and Innovation Office - NKFIH, Project K142169 and NKFI KKP144059.

Both authors have received funding from the HUN-REN Hungarian Research
Network.
\end{funding}

\bibliographystyle{imsart-nameyear} 
\bibliography{references}  

\end{document}